\documentclass{article}
\usepackage {amssymb}
\usepackage {amsfonts}
\usepackage {amsmath}
\usepackage {amsthm}
\usepackage{graphicx}
\usepackage{graphics}
\usepackage {framed}
\usepackage {amsxtra}
\usepackage {enumerate}
\usepackage[margin=1in]{geometry}

\newcommand{\restrict}{\mbox{$\mid\hspace{-1.1mm}\grave{}$}}

\theoremstyle{plain}
\newtheorem{thm}{Theorem}[section]
\newtheorem{cor}[thm]{Corollary} 
\newtheorem{lem}[thm]{Lemma} 
\newtheorem{claim}{Claim}
\newtheorem{prop}[thm]{Proposition}

\theoremstyle{definition}
\newtheorem{defn}{Definition}

\theoremstyle{remark}

\begin{document}

\title{Transpositional sequences and multigraphs}

\author {Alissa Ellis Yazinski \and Raymond R. Fletcher\footnote{alissarachelle@gmail.com and rfletcher210@gmail.com\ \ Virginia State University\ \ Petersburg, VA 23806}  
\and Donald Silberger\footnote{silbergd@newpaltz.edu and DonaldSilberger@gmail.com\ \ State University of New York\ \ New Paltz, NY 12561}}

\maketitle

\centerline{\sf In memory of Eva Ruth Sturgis Silberger, 1962 August 10 -- 2006 January 15}

\begin{abstract} If ${\bf s} := \langle s_0,s_1,\ldots, s_{k-1}\rangle$ is a sequence of length $|{\bf s}|=k$ of permutations on the set $n:=\{0,1,\ldots, n-1\}$ then $\bigcirc{\bf s} := 
s_0\circ s_1\circ\cdots\circ s_{k-1}\in$ Sym$(n)$, and Seq$({\bf s}) := \{{\bf r} := \langle s_{\psi(0)},\ldots s_{\psi(k-1)}\rangle: \psi\in$ Sym$(k)\}$ denotes the set of {\em rearrangements} 
of {\bf s}. Our overall interest is the set Prod$({\bf s}):=\{\bigcirc{\bf r}:{\bf r}\in$ Seq$({\bf s})\}\subseteq$ Sym$(n)$. 

We focus on {\em transpositional} sequences; that is, on those {\bf s}, each of whose terms is a transposition $(x\, y)$. For {\bf u} a transpositional sequence in Sym$(n)$, there is a natural 
correspondence between Seq$({\bf u})$ and its {\em transpositional multigraph} ${\cal T}({\bf u}) := \langle n;E({\bf u})\rangle$ on the vertex set $n$, where the $k$ simple edges $(x\, y)$ 
in the collection $E({\bf u})$ of multiedges in ${\cal T}({\bf u})$ are the $k$ (not obligatorily distinct) terms in the sequence {\bf u}. 

This paper considers antipodal sorts of sequences {\bf s} in Sym$(n)$: 

We call {\bf s} {\em permutationally complete}, aka perm-complete, iff Prod$({\bf s})\in\{$Alt$(n),\,$Sym$(n)\setminus$Alt$(n)\}$, where Alt$(n)\subseteq$ Sym$(n)$ is the alternating subgroup. 
So, Prod$({\bf s})$ is as large as possible if {\bf s} is perm-complete. We provide sufficient criteria for a transpositional sequence {\bf u} in Sym$(n)$ to be perm-complete and also sufficient criteria for {\bf u} not to be perm-complete.

We call {\bf s} {\em conjugacy invariant}, aka CI,  iff the elements in Prod$({\bf s})$ are mutually conjugate. Prod$({\bf s})$ is small if {\bf s} is CI.  We specify the CI transpositional {\bf u} 
in Sym$(n)$. 
\end{abstract}

\section{Introduction} 

Unless specified otherwise, `sequence in $X$' means finite sequence whose terms are elements in the set $X$. For ${\bf f} := \langle x_0,x_1,\ldots,x_k\rangle$ a sequence, $x_i<_{\bf f} x_j$ \ iff \  
$0\le i<j\le k$, and $x_i\le_{\bf f} x_j$  \ iff \ $0\le i\le j\le k$. 

For $n$ a positive integer, Sym$(n)$ and Alt$(n)$ denote respectively the symmetric group and the alternating group on the set $n := \{0,1,\ldots, n-1\}$. When ${\bf s} := 
\langle s_0,s_1,\ldots, s_{k-1}\rangle$ is a {\em permutational sequence}, i.e., a sequence in Sym$(n)$, then $|{\bf s}|=k$ is its {\em length}, and $\bigcirc{\bf s}:=  
s_0\circ s_1\circ\cdots\circ s_{k-1}$ is its compositional product.\footnote{We compose permutations from left to right. That is, when $\{f,g\}\subseteq$ Sym$(n)$ and $x\in n$, then 
$x(f\circ g) = (xf)g = xfg$.} 

Seq$({\bf s})$ denotes the set of sequences that are arrangements of the terms of {\bf s}. That is to say, Seq$({\bf s})$ denotes the set  
$\{{\bf r}: {\bf r} :=\langle s_{\psi(0)},s_{\psi(1)},\ldots, s_{\psi(k-1)}\rangle$, where $\psi\in$ Sym$(k)\}$. Obviously ${\bf r}\in$ Seq$({\bf s}) \Rightarrow |{\bf r|=|s}|$.\vspace{.5em}

Our general subject is the family of sets  Prod$({\bf s}) := \{\bigcirc{\bf r}: {\bf r}\in$ Seq$({\bf s})\}$ for the sequences {\bf s} in Sym$(n)$. However, fully to characterize the family of all such Prod$({\bf s})$ seems daunting. 
So we confine ourselves to the subclass, of that class of {\bf s}, which is treated in the papers \cite{Denes, Eden, Polya, Silberger}, whose results we extend.  

Plainly, either Prod$({\bf s})\subseteq$ Alt$(n)$ or Prod$({\bf s})\subseteq$ Sym$(n)\setminus$Alt$(n)$. Also, $|$Prod$({\bf s})|\le|$Seq$({\bf s})|\le |{\bf s}|!$ \vspace{.5em}  

When $f\in$ Sym$(n)$, the expression supp$(f)$ denotes the set of $x\in n$ for which $xf\not=x$. If {\bf s} is a permutational sequence then Supp$({\bf s})$ denotes the family of 
all supp$(g)$ for which $g$ is a term in {\bf s}.\vspace{.5em} 

By a {\em transposition} we mean a permutation $f\in$ Sym$(n)$ for which there exist elements $a\not=b$ in $n$ with $af=b$, with $bf =a$ and with $xf =x$ for all $x\in n\setminus\{a,b\}$. 
For $n\ge2$, the set of transpositions in Sym$(n)$ is written $1^{n-2}2^1$. By a {\em transpositional sequence} we mean a sequence in the subset $1^{n-2}2^1$ of Sym$(n)$.. 

By the {\em transpositional multigraph} ${\cal T}({\bf u})$ of a sequence {\bf u} in $1^{n-2}2^1$, we mean the labeled multigraph on the vertex set $n$ that has an $(x\, y)$ as a multiedge  
of multiplicity $\mu(g)\ge0$ if and only if the transposition $g := (x\, y)$ occurs exactly $\mu(g)$ times as a term in {\bf u}. For convenience, we will usually take it that ${\cal T}({\bf u})$ is connected, in which event of course    
$\bigcup$Supp$({\bf u})=$ supp$({\bf u})=n$. It is obvious that ${\cal T}({\bf r})={\cal T}({\bf u})$, which is to say that ${\cal T}({\bf r})$ is the same labeled multigraph as ${\cal T}({\bf u})$,  if and only if 
${\bf r}\in$ Seq$({\bf u})$. 

The multigraph ${\cal T}({\bf u})$ is simple, i.e., is a graph, if and only if {\bf u} is injective.\footnote{We call {\bf s} {\em injective} iff $s_i=s_j \Leftrightarrow i=j$ for $s_i$ and $s_j$  terms in {\bf s}; 
i.e., iff the function ${\bf s}:j\mapsto s_j \in {\rm Sym}(n)$ is injective.}  Where we omit the prefix ``multi'' from ``multentity'', we are tacitly indicating that the entity is simple; but our writing that 
$X$ is a multithing does not prohibit $X$ from its being a simple thing. (E.g., a multiedge can be of multiplicity $1$.)

For ${\cal T}({\bf u})$ a simple tree, this graph has been used in \cite{Eden} to specify the set of all ${\bf r}\in$ Seq$({\bf u})$ for which $\bigcirc{\bf r}=\bigcirc{\bf u}$, thus inducing a natural 
partition of Seq$({\bf u})$. Also, both \cite{Denes} and \cite{Silberger} show that, if the multigraph ${\cal T}({\bf u})$ is simple, then every element in Prod$({\bf u})$ is a cyclic permutation of the set 
$n$ if and only if ${\cal T}({\bf u})$ is a tree. See also \cite{Polya}.

\begin{defn}\label{DefPermComplete} A sequence {\bf s} in Sym$(n)$ is {\em permutationally complete} iff Prod$({\bf s})\in \{{\rm Alt}(n),{\rm Blt}(n)\}$ where ${\rm Blt}(n) := 
{\rm Sym}(n)\setminus{\rm Alt}(n)$; that is to say, ${\bf s}$ is permutationally complete iff ${\rm Prod}({\bf s})\in{\rm Sym}(n)/{\rm Alt}(n)$. `Permutationally complete' is abbreviated 
perm-complete.\end{defn}

A sequence ${\bf s}$ in ${\rm Sym}(n)$ is perm-complete if and only if ${\rm Prod}({\bf s})$ is of largest possible size, $|{\rm Prod}({\bf s})| = n!/2$.

In \S2 we elaborate criteria that imply the perm-completeness of a sequence {\bf u} in $1^{n-2}2^1$, and we provide other criteria which entail that such a {\bf u} cannot be perm-complete. 

If the product function $\bigcirc$ maps Seq$({\bf s})$ onto an element in the family ${\rm Sym}(n)/{\rm Alt}(n)$, and if {\bf r} is a sequence produced by inserting into {\bf s} an additional term 
$f\in{\rm Sym}(n)$, then plainly $\bigcirc$ maps ${\rm Seq}({\bf r})$ onto an element in ${\rm Sym}(n)/{\rm Alt}(n)$; {\it viz} Theorem \ref{SupSeq}. So we can confine our attention in \S2 to 
those transpositional {\bf u} which are injective, and whose transpositional multigraphs are consequently simple; i.e., they are ``graphs''.   

These graphs facilitate the identification of infinite classes of {\bf u} which are perm-complete and also of infinite classes of {\bf u} which fail to be perm-complete. For instance, if ${\cal T}({\bf u})$ 
is the complete graph ${\cal K}_n$ then {\bf u} is perm-complete, but if ${\cal T}({\bf u})$ is a tree with $n\ge3$ then {\bf u} is not perm-complete. Therefore, every injective perm-complete 
transpositional sequence {\bf u} has a minimal perm-complete subsequence. 

In \S2 we will specify, for each $n\ge2$, a family of minimal perm-complete injective sequences in $1^{n-2}2^1$.

\begin{defn}\label{DefCI} We call a permutational sequence {\bf s} {\em conjugacy invariant}, aka CI, iff every element in Prod$({\bf s})$ is conjugate to $\bigcirc{\bf s}$.\end{defn}

We lose no generality if we ignore the fact that ${\cal T}({\bf u})$ is labeled. Indeed, we call an unlabeled multigraph ${\cal G}$ perm-complete if ${\cal G}$ is isomorphic to ${\cal T}({\bf u})$ 
for some perm-complete ${\bf u}$. Likewise, ${\cal G}$ is CI if ${\bf u}$ is CI. 

\section{Permutational completeness}

\begin{thm} \label{SupSeq}  Every supersequence in {\rm Sym}$(n)$ of a perm-complete sequence {\bf s} in {\rm Sym}$(n)$ is perm-complete.    \end{thm}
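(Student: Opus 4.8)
The plan is to reduce to the case of inserting a single permutation $f$ and then to exploit that right-multiplication by a fixed permutation carries each coset of Alt$(n)$ onto a coset. A general supersequence ${\bf t}$ of ${\bf s}$ arises by inserting finitely many terms into ${\bf s}$; since Seq$({\bf t})$ depends only on the multiset of terms of ${\bf t}$ and not on their positions, I may adjoin these terms one at a time and induct on the number inserted. It therefore suffices to treat the sequence ${\bf t}$ obtained from ${\bf s}$ by adjoining a single term $f\in$ Sym$(n)$.

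For this single-insertion case I would establish by double containment that Prod$({\bf t})$ equals a full coset of Alt$(n)$. For the easy containment, observe that every $\bigcirc{\bf r}$ with ${\bf r}\in$ Seq$({\bf t})$ is a compositional product of the same multiset of permutations---the terms of ${\bf s}$ together with $f$---so all such products share a common parity. Hence Prod$({\bf t})$ lies inside a single member of Sym$(n)/$Alt$(n)$; this is precisely the ``plainly'' observation recorded in the introduction.

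For the reverse containment I would restrict attention to those ${\bf r}\in$ Seq$({\bf t})$ that place $f$ in the final position. For each such ${\bf r}$ one has $\bigcirc{\bf r}=(\bigcirc{\bf r}')\circ f$, where ${\bf r}'$ ranges over all of Seq$({\bf s})$ as the remaining terms are permuted; consequently Prod$({\bf t})\supseteq$ Prod$({\bf s})\circ f$, the right-translate of Prod$({\bf s})$ by $f$. Since ${\bf s}$ is perm-complete, Prod$({\bf s})$ is either Alt$(n)$ or Blt$(n)$, and right-translation by the fixed permutation $f$ carries this bijectively onto a coset of Alt$(n)$---namely Alt$(n)$ itself or Blt$(n)$ according to the parity of $f$. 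Thus Prod$({\bf s})\circ f$ is already a full coset, forcing Prod$({\bf t})$ to coincide with that coset and hence to be perm-complete.

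I anticipate no serious obstacle here: the substance is entirely coset bookkeeping. The one point meriting care is the interaction between the induction and the meaning of ``supersequence''---specifically, confirming that inserting terms into arbitrary positions yields the same Prod as appending them at the end. This holds because Seq$({\bf t})$ is by definition closed under all rearrangements of the terms of ${\bf t}$, so the positions at which terms are inserted are immaterial, and the argument above may be applied verbatim at each inductive step.
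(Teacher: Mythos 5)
Your proposal is correct and takes essentially the same route as the paper's proof: both arguments hinge on the fact that right-composition with the adjoined term carries ${\rm Prod}({\bf s})$ bijectively onto a full coset of ${\rm Alt}(n)$, which (together with the parity constraint) forces the supersequence's Prod to be exactly that coset. You merely make explicit two steps the paper leaves tacit, namely the induction reducing an arbitrary supersequence to a single appended term, and the containment of ${\rm Prod}({\bf t})$ in a single coset.
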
 

\begin{proof} Without loss of generality, let ${\rm Prod}({\bf s})={\rm Alt}(n)$. Pick $g\in{\rm Sym}(n)$. The mapping, ${\rm Alt}(n)\rightarrow{\rm Sym}(n)$ defined by $f\mapsto f\circ g$, takes 
${\rm Prod}({\bf s})$ either into ${\rm Alt}(n)$ or into ${\rm Blt}(n)$, and it is bijective. Since $|{\rm Alt}(n)|=n!/2=|{\rm Blt}(n)|$, we conclude that ${\rm Prod}({\bf w})\in{\rm Sym}(n)/{\rm Alt}(n)$ 
where ${\bf w} := \langle{\bf s},g\rangle=\langle s_0,s_1,\ldots, s_{k-1},g\rangle$.     \end{proof}

Call a family ${\cal A}$ {\em connected} if $\bigcup{\cal A}$ cannot be expressed as the disjoint union, $\bigcup{\cal A} = \bigcup{\cal B}\,\dot{\cup}\bigcup{\cal C}$, of nonempty subfamilies ${\cal B}$ 
and ${\cal C}$  of ${\cal A}$. Observe that if {\bf s} is perm-complete then ${\rm Supp}({\bf s})$ is connected.\vspace{.5em}

In \S2. we restrict our concern to those ${\bf u} := \langle u_0,u_1,\ldots, u_{k-1}\rangle$ in $1^{n-2}2^1$ for which $\bigcup{\rm Supp}({\bf u}) = n$, and for which the family ${\rm Supp}({\bf u})$ is connected. 
It is easy to see that if $g\in{\rm Prod}({\bf u})$ then $g^-\in{\rm Prod}({\bf u})$ too.\footnote{We write $g^-$ to designate the inverse of $g$, where other people may prefer instead to write $g^{-1}$.}

\subsection{Criteria ensuring that {\bf u} is not perm-complete} 

\begin{thm}\label{Deg1} Let ${\cal G}$ be a connected graph with vertex set $n\ge3$, and which has a vertex $a$ if degree $1$. Then ${\cal G}$ is not perm-complete. Consequently, no tree having three 
or more vertices is perm-complete.     \end{thm}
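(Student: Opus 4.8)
The plan is to work with the injective transpositional sequence ${\bf u}$ whose simple transpositional multigraph ${\cal T}({\bf u})$ is ${\cal G}$; by definition the perm-completeness of ${\cal G}$ is the perm-completeness of ${\bf u}$, and this property depends only on ${\rm Prod}({\bf u})$, which is invariant under rearrangement. Since $a$ has degree $1$ in ${\cal G}$, exactly one edge is incident to $a$, say $(a\,b)$, and because ${\cal G}$ is simple this transposition occurs exactly once as a term of ${\bf u}$; every other term of ${\bf u}$ is a transposition whose edge misses $a$ and hence fixes $a$. The heart of the argument is the claim that $a\in{\rm supp}(\bigcirc{\bf r})$ for every ${\bf r}\in{\rm Seq}({\bf u})$; that is, no product returns $a$ to itself.

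To prove the claim, I would fix ${\bf r}\in{\rm Seq}({\bf u})$ and split it at the unique occurrence of $(a\,b)$, writing ${\bf r}=\langle{\bf p},(a\,b),{\bf q}\rangle$. Set $p:=\bigcirc{\bf p}$ and $q:=\bigcirc{\bf q}$; each is a product of transpositions missing $a$, so both $p$ and $q$ fix $a$ (the empty-product cases give the identity, which also fixes $a$). Composing left to right, $a\bigcirc{\bf r}=a\,p\,(a\,b)\,q=a\,(a\,b)\,q=b\,q$, and $b\,q\neq a$ because $q$ is a bijection fixing $a$ while $b\neq a$. Hence $a\bigcirc{\bf r}\neq a$, which establishes the claim.

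With the claim in hand the conclusion is immediate. Every $g\in{\rm Prod}({\bf u})$ moves $a$, so ${\rm Prod}({\bf u})$ contains no permutation that fixes $a$. But ${\rm Alt}(n)$ contains the identity, which fixes $a$, and, using $n\geq3$, the set ${\rm Blt}(n)={\rm Sym}(n)\setminus{\rm Alt}(n)$ contains a transposition $(c\,d)$ with $\{c,d\}\subseteq n\setminus\{a\}$, which also fixes $a$. Thus ${\rm Prod}({\bf u})$ equals neither ${\rm Alt}(n)$ nor ${\rm Blt}(n)$, so ${\bf u}$, and therefore ${\cal G}$, is not perm-complete. The tree corollary then follows because any tree on $n\geq3$ vertices is connected and has a leaf, so the theorem applies verbatim.

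I expect the only genuine obstacle to be the central claim, and its proof hinges essentially on ${\cal G}$ being simple: it is precisely because $(a\,b)$ occurs an odd number of times, namely once, that the net effect along ${\bf r}$ carries $a$ to $b\,q\neq a$. I would flag that if the multiplicity of $(a\,b)$ were even the argument would collapse, which is why restricting to graphs (injective ${\bf u}$) is the right setting here; a general multigraph version would have to track this parity.
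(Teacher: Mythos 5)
Your proposal is correct and follows essentially the same route as the paper: split ${\bf r}$ at the unique occurrence of $(a\,b)$, note every other term fixes $a$, and compute $a\bigcirc{\bf r} = b\bigcirc{\bf q} \neq a$. The only difference is cosmetic — you argue contrapositively and spell out explicitly (using $n\ge3$) why both ${\rm Alt}(n)$ and ${\rm Blt}(n)$ contain a permutation fixing $a$, a step the paper leaves implicit when it asserts the existence of ${\bf r}$ with $a = a\bigcirc{\bf r}$.
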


\begin{proof} Pretend that ${\cal G}$ is perm-complete, and let ${\bf u}$ be a transpositional sequence for which ${\cal G = T}({\bf u})$. Then there exists ${\bf r}\in{\rm Seq}({\bf u})$ with 
$a = a\bigcirc{\bf r}$. There are subsequences ${\bf f}$ and ${\bf g}$ of ${\bf r}$ such that ${\bf r} = \langle{\bf f},(a\,b), {\bf g}\rangle$ for some $b\in n\setminus\{a\}$. Since by hypothesis 
deg$_{\cal g}(a)=1$, the transposition $(a\,b)$ is the only term in the injective sequence ${\bf r}$ with $a$ in its support, we get that 
$a = a\bigcirc{\bf r} = a[\bigcirc{\bf f}\circ(a\,b)\circ\bigcirc{\bf g}] = a[(a\,b)\circ\bigcirc{\bf g}] = b\bigcirc{\bf g} \not= a$. \end{proof} 

A transpositional sequence {\bf u} in Sym$(n)$ with $3\le|{\bf u}|<n$ fails to be perm-complete, since $|$Prod$({\bf u})|\le |$Seq$({\bf u})|\le |{\bf u}|!<n!/2$. Thus Theorem \ref{Deg1} implies that 
there exist non-perm-complete injective {\bf u} of length \[{{n-1}\choose{2}}+1.\]

Although $|{\rm Seq}({\bf r})|=|{\bf r}|!$ when {\bf r} is an injective sequence in ${\rm Sym}(n)$, it is rare that $|{\rm Prod}({\bf r})|=|{\bf r}|!$ 

\begin{thm}\label{Deg2} Let ${\cal G}$ be a connected graph\footnote{For the notion of a connected graph, one may consult\cite{Harary} or almost any other textbook on graph theory.} on the vertex set $n\ge4$, and let ${\cal G}$ have adjacent vertices $x$ and $y$ each of which is of degree $2$. Then ${\cal G}$ is 
not perm-complete.     \end{thm}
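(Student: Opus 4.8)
The plan is to show that no element of ${\rm Prod}({\bf u})$ fixes both $x$ and $y$, which already suffices. Since $n\ge 4$, the pointwise stabilizer of $\{x,y\}$ is a copy of ${\rm Sym}(n\setminus\{x,y\})$ on at least two points, so it contains permutations of both parities; in particular it contains a permutation whose parity agrees with that of $\bigcirc{\bf u}$, i.e. a permutation lying in whichever of ${\rm Alt}(n)$ or ${\rm Blt}(n)$ contains ${\rm Prod}({\bf u})$. That permutation then cannot belong to ${\rm Prod}({\bf u})$, so ${\rm Prod}({\bf u})$ is a proper subset of its parity class and ${\cal G}$ fails to be perm-complete.

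By Theorem \ref{SupSeq} I may assume ${\bf u}$ is injective, so ${\cal G}={\cal T}({\bf u})$ is simple and each transposition occurs as a single term. Because $\deg_{\cal G}(x)=\deg_{\cal G}(y)=2$ and $x$ is adjacent to $y$, the two edges at $x$ are exactly $t_2:=(x\,y)$ and $t_1:=(x\,a)$, and the two edges at $y$ are exactly $t_2$ and $t_3:=(y\,b)$, where $a,b\in n\setminus\{x,y\}$ (possibly $a=b$, the triangle case). Every other term of ${\bf u}$ is disjoint from $\{x,y\}$. Given any ${\bf r}\in{\rm Seq}({\bf u})$, I would split ${\bf r}$ at the unique occurrences of $t_1,t_2,t_3$ to write $\bigcirc{\bf r}=\beta_0\circ\tau_1\circ\beta_1\circ\tau_2\circ\beta_2\circ\tau_3\circ\beta_3$, where $(\tau_1,\tau_2,\tau_3)$ is one of the six orderings of $(t_1,t_2,t_3)$ and each $\beta_i$ is a product of transpositions avoiding $\{x,y\}$, hence fixes both $x$ and $y$ and permutes $n\setminus\{x,y\}$.

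The heart of the argument is a finite case analysis over those six orderings. For each ordering I would trace the image of $x$ and of $y$ under $\bigcirc{\bf r}$, using the two facts that every $\beta_i$ fixes $x$ and $y$, and that an element sitting at a vertex of $n\setminus\{x,y\}$ remains in $n\setminus\{x,y\}$ under each $\beta_i$. In each of the six orderings one checks that at least one of $x\bigcirc{\bf r}$ and $y\bigcirc{\bf r}$ is forced either to equal the other of $x,y$ or to lie in $n\setminus\{x,y\}$; consequently $x\bigcirc{\bf r}=x$ and $y\bigcirc{\bf r}=y$ can never hold together. This is the claim, and with the first paragraph it finishes the proof.

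The main obstacle is controlling the `external wandering': once the traced element leaves $\{x,y\}$ through the gate $a$ (via $t_1$) or the gate $b$ (via $t_3$), it may roam over the rest of ${\cal G}$ inside the blocks $\beta_i$, so its exact location there is uncontrolled. The resolution is that re-entry into $\{x,y\}$ is possible only through a still-unused gate transposition $t_1$ or $t_3$, so I never need the precise external position --- only the binary data of whether the element currently sits in $\{x,y\}$ or outside it. That reduction is exactly what makes the six-case trace finite and uniform, and it also absorbs the triangle case $a=b$ without modification, since the analysis uses nothing about $a$ and $b$ beyond their both lying in $n\setminus\{x,y\}$.
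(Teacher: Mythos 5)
Your proposal is correct and follows essentially the same route as the paper's proof: both split ${\bf r}$ at the three transpositions $(a\,x),(x\,y),(y\,b)$, observe that the remaining blocks fix $x$ and $y$ and map $n\setminus\{x,y\}$ into itself, and run a finite case analysis on the orderings of those three terms to show no rearrangement fixes both $x$ and $y$. The only differences are cosmetic: you make explicit the parity/stabilizer argument that the paper leaves implicit when it simply ``lets'' ${\bf r}$ fix $x$ and $y$ (your use of Theorem \ref{SupSeq} is unnecessary, since injectivity of ${\bf u}$ is automatic for a simple graph ${\cal G}$), and you treat all six orderings where the paper collapses them to three essential cases via the $x\leftrightarrow y$ and reversal symmetries.
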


\begin{proof} Pretend that ${\cal G}$ is perm-complete, and assume that ${\bf u}$ is an injective sequence in $1^{n-2}2^1$ with ${\cal T}({\bf u})$ isomorphic to ${\cal G}$. There are\footnote{not necessarily distinct} elements $a$ and $b$ in $n\setminus\{x,y\}$ such that $(a\,x),\,(x\,y)$, and $(y\,b)$ are edges 
of ${\cal G}$. So we let ${\bf r}\in{\rm Seq}({\bf u})$ satisfy  both $x = x\bigcirc{\bf r}$ and $y = y\bigcirc{\bf r}$. Let ${\bf r'}$ be the sequence of length $|{\bf u}|-3$ obtained by removing the terms $(a\,x), (x\,y)$ and $(y\,b)$ from ${\bf r}$. Let   
${\bf r' = fghk}$ be the factorization of ${\bf r'}$ into the four\footnote{some of which may be empty} consecutive segments engendered by the removal from ${\bf r}$ of those three terms. Of 
course $\{x,y\}\cap\big({\rm supp}(\bigcirc{\bf f})\cup{\rm supp}(\bigcirc{\bf g})\cup{\rm supp}(\bigcirc{\bf h})\cup{\rm supp}(\bigcirc{\bf k})\big) = \emptyset$. There are essentially three cases. \vspace{.5em}

\underline{Case}: \ $(a\,x) <_{\bf r} (x\,y) <_{\bf r} (y\,b)$. So ${\bf r} = \langle{\bf f},(a\,x),{\bf g},(x\,y),{\bf h},(y\,b),{\bf k}\rangle$. Then 
$x\bigcirc{\bf r} = x[\bigcirc{\bf f}\circ(a\,x)\circ\bigcirc{\bf g}\circ(x\,y)\circ\bigcirc{\bf h}\circ(y\,b)\circ\bigcirc{\bf k}] = 
x[(a\,x)\circ\bigcirc{\bf g}\circ(x\,y)\circ\bigcirc{\bf h}\circ(y\,b)\circ\bigcirc{\bf k}]  =  
a[\bigcirc{\bf g}\circ(x\,y)\circ\bigcirc{\bf h}\circ(y\,b)\circ\bigcirc{\bf k}] = a[\bigcirc{\bf h}\circ(y\,b)\circ\bigcirc{\bf k}]$.

{\it Subcase}: \ $b=a\bigcirc{\bf h}$. Then $x\bigcirc{\bf r} = y\bigcirc{\bf k} = y \not= x$ since $y\not\in{\rm supp}(\bigcirc{\bf k})$.

{\it Subcase}: \ $b\not= c := a\bigcirc{\bf h}$. Then $x\bigcirc{\bf r} = c\bigcirc{\bf hk} \not= x$.\vspace{.5em}

\underline{Case}: \ $(a\,x) <_{\bf r} (y\,b) <_{\bf r} (x\,y)$. Here, ${\bf r} = \langle{\bf f},(a\,x),{\bf g},(y\,b),{\bf h},(x\,y),{\bf k}\rangle$. Now  $y\bigcirc{\bf r} = b[\bigcirc{\bf h}\circ(x\,y)\circ\bigcirc{\bf k}] 
 = b\bigcirc{\bf hk} \not= y$.\vspace{.5em}

\underline{Case}: \ $(y\,b) <_{\bf r} (a\,x) <_{\bf r} (x\,y)$. So  ${\bf r = \langle f},(y\,b),{\bf g},(a\,x),{\bf h},(x\,y),{\bf k}\rangle$, and so $x\bigcirc{\bf r} =  a\bigcirc{\bf hk} \not= x$.\vspace{.5em}  

\noindent In each of these three cases we see that either $x\bigcirc{\bf r} \not= x$ or $y\bigcirc{\bf r} \not= y$, contrary to our requirement on ${\bf r}$. \end{proof}

\noindent{\bf Remarks.} Surely both of the complete graphs ${\cal K}_2$ and ${\cal K}_3$ are perm-complete. In fact, the triangle ${\cal K}_3$ is minimally so, in the sense that 
the removal of one edge produces a graph which is not perm-complete.\vspace{1em} 
 
Next, we prepare the way for two more-general theorems, each of which provides sufficient conditions for non-perm-completeness. \vspace{.5em}

Since, in the present context, the transformational multigraph of each minimal perm-complete sequence is simple, we let ${\cal G}$ be a simple connected graph whose vertex set is $n$, 

Fix a sequence ${\bf u} := \langle u_0,u_1,\ldots, u_k\rangle$ for which ${\cal G} = {\cal T}({\bf u})$, where $u_i := (x_i\,y_i)$ for each $i\le k$. 

$\overrightarrow{\cal G}$ denotes the digraph obtained by replacing each edge $(x\, y)$ of ${\cal G}$ with the two {\em arcs}  $x\rightarrow y$ and $x\leftarrow y$.

For $x\in n$, the {\bf u}-{\em path from $x$} is the subdigraph, $\overrightarrow{{\bf u}_x }:= \quad x\rightarrow z_1\rightarrow z_2\rightarrow\cdots\rightarrow z_m\rightarrow y$, of $\overrightarrow{\cal G}$, 
where the vertices of this path are chosen (and given new names) in the following fashion: 

Let $j(1)$ be the least integer $\ell$ such that $x\in{\rm supp}(u_\ell)$. So $u_{j(1)} = (x\, z)$ for some $z\in n$. Supposing the integers $j(1)<j(2)<\cdots<j(i)$ 
to have been chosen with $(z_{d-1}\, z_d) = u_{j(d)}$ for each $d\in\{2,3,\ldots, i\}$, let $j(i+1)$ be the smallest integer $v>j(i)$ with $z_i\in$ supp$(u_v)$ if any such $v$ exists, 
and in this event, define  $(z_i\, z_{i+1}) := u_{j(i+1)}$; but if there is no such $v$ then define $\langle z_m,y\rangle := \langle z_{i-1},z_i\rangle$. 

Let ${\bf u}_x$ be the subsequence of ${\bf u}$ whose terms contribute the respective arcs that comprise  $\overrightarrow{{\bf u}_x}$. 

\begin{lem}\label{PartitionArcs} \ $\{\overrightarrow{{\bf u}_x}: x\in n\}$ is a partition of the set of arcs comprising the digraph $\overrightarrow{\cal T}({\bf u})$.  \end{lem}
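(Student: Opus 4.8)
The plan is to make explicit the deterministic rule by which the greedy construction selects, upon arriving at a vertex, the next arc it traverses, and then to show that this rule globally splices the arcs of $\overrightarrow{\cal T}({\bf u})$ into precisely the paths $\overrightarrow{{\bf u}_x}$. First I would regard each term $u_\ell = (a\,b)$ as contributing the two arcs $a\to b$ and $b\to a$, each carrying the \emph{position label} $\ell$. Fixing a vertex $a$, I list the positions of the terms incident with $a$ as $\ell_1 < \ell_2 < \cdots < \ell_r$, where $r = \deg_{\cal G}(a)$ and $u_{\ell_t} = (a\, c_t)$; thus the $r$ arcs leaving $a$ are $a\to c_1,\ldots, a\to c_r$ and the $r$ arcs entering $a$ are $c_1\to a,\ldots,c_r\to a$. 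The construction's rule at $a$ is exactly this: having arrived along $c_t\to a$ (via the term at position $\ell_t$), depart along $a\to c_{t+1}$ if $t<r$, and terminate the path if $t=r$; and a path that \emph{starts} at $a$ departs along $a\to c_1$. This rule transcribes the clause ``smallest $v>j(i)$ with $z_i\in{\rm supp}(u_v)$'' and the endpoint relabelling $\langle z_m,y\rangle:=\langle z_{i-1},z_i\rangle$.

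Second, I would record that this local rule is a bijection at each vertex between the $r+1$ objects $\{\text{START-at-}a\}\cup\{\text{arcs entering }a\}$ and the $r+1$ objects $\{\text{arcs leaving }a\}\cup\{\text{END-at-}a\}$. Consequently every arc of $\overrightarrow{\cal T}({\bf u})$ acquires a unique successor (an arc or the symbol END) and a unique predecessor (an arc or the symbol START), so the arc set decomposes into maximal chains, each of which is either a START-to-END directed path or a closed directed cycle carrying neither START nor END. The paths so obtained are exactly the $\overrightarrow{{\bf u}_x}$, one issuing from the unique START at each vertex $x$, since the path leaving START-at-$x$ departs along $x\to c_1$, with $c_1$ furnished by the first term involving $x$, and thereafter follows the same rule as the greedy construction.

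The crux, and the step I expect to carry the argument, is ruling out cycles. Here I would invoke the position labels: each application of the transition rule passes from a term at position $\ell_t$ to a term at the strictly larger position $\ell_{t+1}$, so along any chain the position labels strictly increase. A cycle would force a label to exceed itself, an absurdity; hence no cycles occur, every chain runs from a START to an END, and—since each of the $n$ vertices supplies exactly one START and exactly one END—there are exactly $n$ chains. (The same monotonicity incidentally shows no single path reuses both arcs of one term.)

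Finally, I would assemble the conclusion. Because every arc lies on exactly one maximal chain and every chain is one of the paths $\overrightarrow{{\bf u}_x}$, the family $\{\overrightarrow{{\bf u}_x}:x\in n\}$ is pairwise arc-disjoint and exhausts the $2(k+1)=\sum_{a}\deg_{\cal G}(a)$ arcs of $\overrightarrow{\cal T}({\bf u})$, which is exactly the asserted partition. The only bookkeeping to watch is the degenerate behaviour at degree-one vertices ($r=1$, where START-at-$a$ pairs with $a\to c_1$ and $c_1\to a$ pairs with END-at-$a$) and the faithful matching of the relabelled endpoint in the definition with the ``terminate when $t=r$'' clause of the rule; both are routine once the transition bijection is in hand.
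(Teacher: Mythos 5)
Your proof is correct, but it travels a genuinely different route from the paper's. The paper's argument is algebraic and almost instantaneous: for an arc $a\to b$ arising from the term $u_i$, it forms the prefix product $g := u_0\circ u_1\circ\cdots\circ u_{i-1}$ and observes that $v := ag^-$ is the unique vertex with $vg=a$; since the path $\overrightarrow{{\bf u}_v}$ tracks exactly the trajectory of $v$ under successive terms, that arc lies on $\overrightarrow{{\bf u}_v}$ and on no other path --- existence and uniqueness fall out together from the bijectivity of the permutation $g$. You, by contrast, never compose any of the $u_j$: you set up a local successor rule at each vertex, check it is a bijection between $\{\mathrm{START}\}\cup\{\text{arcs in}\}$ and $\{\text{arcs out}\}\cup\{\mathrm{END}\}$, decompose the arc set into maximal chains, and kill the possible cycles by the strict increase of position labels along a chain. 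What the paper's approach buys is brevity, at the cost of silently relying on the equivalence between the greedy path construction and point trajectories under prefix products. What your approach buys is a self-contained combinatorial argument plus several structural facts for free: every path terminates, each vertex is the start of exactly one path and the end of exactly one, and no path uses both arcs of a single edge --- facts the paper later leans on informally (e.g.\ in the circuit argument of Theorem \ref{NotPermComplete1}). One small point of hygiene: to match the paper's standing hypotheses you should note explicitly that connectivity of ${\cal T}({\bf u})$ on the vertex set $n$ gives $\deg_{\cal G}(a)\ge1$ at every vertex, so each $\overrightarrow{{\bf u}_x}$ is nonempty and the word ``partition'' is justified; the paper disposes of this in its first sentence via $n=\bigcup{\rm Supp}({\bf u})$.
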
 

\begin{proof} Since $n = \bigcup{\rm Supp}({\bf u})$, we have that ${\bf u}_x\not=\emptyset$ for every $x\in n$. Let $a\rightarrow b$ be an arc in $\overrightarrow{\cal T}({\bf u})$. Then 
$(a\, b)$ is a term $u_i$ in the sequence {\bf u}. If $i=0$ then let $g$ be the identity permutation $\iota\restrict n$; but, if $i>0$, let $g := u_0\circ u_1\circ\cdots\circ u_{i-1}$. Let $v:=ag^-$.Then  
$vg=a$, and so $a\rightarrow b$ is an arc of ${\bf u}_v$. Furthermore, if $v\not= q\in n$, then $qg\not=a$ and thus $a\rightarrow b$ is not an arc of ${\bf u}_q$.    \end{proof} 

\begin{defn}\label{Minimal} A perm-complete sequence ${\bf s}$ in ${\rm Sym}(n)$ is {\em minimally perm-complete} iff the removal of any term of ${\bf s}$ results in a sequence which is not perm-complete. \end{defn}

\begin{defn}\label{Cutset} A set $C$ of edges of a connected graph ${\cal G}$ is a {\em cut set} of ${\cal G}$ iff the removal of $C$ from the edge set of ${\cal G}$ results in a graph that is the union ${\cal G}_0\,\dot\cup\, {\cal G}_1$ of two disjoint  subgraphs of ${\cal G}$, with each edge in $C$ having one vertex in ${\cal G}_0$ and the other in ${\cal G}_1$.\end{defn}

The next two theorems facilitate the identification of non-perm-complete transpositional sequences. 

\begin{thm}\label{NotPermComplete1} Let ${\cal G}:=\langle n;E\rangle$ be a simple connected graph whose vertex set is $n$ and whose edge set is $E$, and which has a cut set 
$C\subseteq E$. Let ${\cal G}_0 :=\langle V_0;E_0\rangle$ and ${\cal G}_1 := \langle V_1;E_1\rangle$ be the disjoint subgraphs of ${\cal G}$ gained by the removal of $C$ from $E$, 
where $V_i$ and $E_i$ are respectively the vertex sets and the edge sets of the two ${\cal G}_i$. Let ${\cal G}_0$ be a forest, let $2|C|<|V_0|$, and let $|V_1|\ge2$. Then ${\cal G}$ fails 
to be perm-complete.        \end{thm}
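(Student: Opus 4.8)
My plan rests on a general \emph{cut bound} that extends the arc-counting of Lemma~\ref{PartitionArcs}. For any $S\subseteq n$ write $\partial S$ for the set of edges of $\mathcal G$ with exactly one endpoint in $S$. I claim that for every rearrangement $\mathbf r$ the product $g:=\bigcirc\mathbf r$ moves at most $|\partial S|$ vertices out of $S$; that is, $|\{v\in S:vg\notin S\}|\le|\partial S|$. To see this I would track the $|\partial S|$ edges crossing the cut: each is a transposition occurring exactly once in $\mathbf r$, and at the instant it is applied it exchanges the two tokens then occupying its endpoints, so it sends exactly one token from $S$ to $n\setminus S$. Summing over all $n$ trajectories, the total number of departures from $S$ is therefore exactly $|\partial S|$, and each vertex $v\in S$ with $vg\notin S$ is responsible for at least one departure. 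Taking $S=V_0$ recovers the special fact that every product sends at most $|C|$ vertices of $V_0$ into $V_1$.

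Next I would reduce non-perm-completeness to the existence of a \emph{sparse cut}, by which I mean a set $S$ with $|\partial S|+1\le\min\{|S|,\,n-|S|\}$. Given such an $S$, pick any $|\partial S|+1$ vertices of $S$, map them injectively onto $|\partial S|+1$ vertices of $n\setminus S$, and extend to a permutation $g^\ast\in\mathrm{Sym}(n)$; since $n-|S|\ge2$, I may post-compose with a transposition of two vertices of $n\setminus S$ to place $g^\ast$ in the same coset of $\mathrm{Alt}(n)$ as $\bigcirc\mathbf u$ without changing the number of vertices it moves out of $S$. Then $g^\ast$ moves $|\partial S|+1>|\partial S|$ vertices out of $S$, so the cut bound gives $g^\ast\notin\mathrm{Prod}(\mathbf u)$; as $g^\ast$ lies in the coset that contains $\mathrm{Prod}(\mathbf u)$, that coset is not filled and $\mathbf u$ is not perm-complete.

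It remains to produce a sparse cut from the hypotheses. Assign to each $v\in V_0$ the weight $1-c(v)$, where $c(v)$ is the number of cut edges at $v$, so that a set $S\subseteq V_0$ has weight $w(S)=|S|-c(S)$ with $c(S):=\sum_{v\in S}c(v)$. The whole of $V_0$ has weight $|V_0|-|C|>|V_0|/2$ by the hypothesis $2|C|<|V_0|$. If $|V_1|>|C|$ the cut $S=V_0$ already works, since $|\partial V_0|=|C|$ and $\min\{|V_0|,|V_1|\}\ge|C|+1$. In general I would use the forest structure: for a single forest edge $e$, one side $S$ satisfies $|\partial S|=1+c(S)$ (the edge $e$ together with the cut edges leaving $S$), so that $|\partial S|+1\le|S|$ is equivalent to $w(S)\ge2$; taking $S$ to be the \emph{smaller} side also gives $|\partial S|+1\le n-|S|$ because $n-|S|\ge(|V_0|-|S|)+|V_1|>|S|$. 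Thus it suffices to find a forest edge one of whose sides has at least two more vertices than cut-incidences; the large total weight of $V_0$ and the acyclicity of $\mathcal G_0$ should force this by descending through the forest toward the heavier side. (A side that degenerates to a single vertex is a degree-$1$ vertex, handled by Theorem~\ref{Deg1}.)

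The main obstacle is exactly this last existence claim: that \emph{every} forest $\mathcal G_0$ with $2|C|<|V_0|$ and $|V_1|\ge2$ has a forest edge with a side of weight at least $2$. The difficulty is that the cut vertices may be scattered so that small sides are cut-heavy and carry little weight, while large sides, though heavy, violate $|\partial S|+1\le n-|S|$; reconciling the two inequalities is the delicate point, and it is conceivable that one must choose $S$ as a whole pure sub-region rather than a single fundamental cut. By comparison the cut bound of the first paragraph and the construction of the missing permutation are routine, and the parity bookkeeping is harmless, since $n=|V_0|+|V_1|\ge2|C|+3$ always leaves spare vertices with which to correct the sign.
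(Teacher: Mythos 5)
Your first two paragraphs are correct: the token-counting cut bound holds (each edge of $\partial S$ occurs exactly once as a term, and its application produces exactly one departure from $S$, so at most $|\partial S|$ vertices of $S$ can have images outside $S$), and manufacturing a permutation of the right parity that moves $|\partial S|+1$ vertices out of a sparse cut $S$ is routine. But note that these two steps are, in substance, the paper's proof of Theorem~\ref{NotPermComplete2}: the hypothesis $|C|<\min\{|V_0|,|V_1|\}$ there says exactly that $S=V_0$ is sparse. Theorem~\ref{NotPermComplete1} is aimed precisely at the cases your reduction cannot reach directly, namely $|V_1|\le|C|$, and for those cases everything hinges on your third paragraph: that the hypotheses (${\cal G}_0$ a forest, $2|C|<|V_0|$, $|V_1|\ge2$) force the existence of \emph{some} sparse cut somewhere in ${\cal G}$. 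That claim is the entire content of the theorem --- it is the only place where the forest hypothesis and the inequality $2|C|<|V_0|$ enter --- and you do not prove it; you yourself flag it as the main obstacle. So the proposal has a genuine gap, sitting exactly at the mathematical heart of the statement.

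Moreover, the concrete plan you sketch --- take $S$ to be one side of a single forest edge and descend toward the heavier side --- provably fails on graphs covered by the theorem. Let ${\cal G}_0$ be the spider with center $c_0$ and four legs $c_0-a_i-b_i$ ($1\le i\le 4$), let $V_1=\{y,z\}$ with the edge $(y\,z)$, and let $C=\{(b_1\,y),(b_2\,y),(b_3\,z),(b_4\,z)\}$, so $n=11$, $|V_0|=9>8=2|C|$, $|V_1|=2$, and every vertex has degree at least $2$. No side of any single forest edge is sparse: the sides $\{a_i,b_i\}$ and $\{b_i\}$ have boundaries of size $2$ but only $2$ and $1$ vertices, while their complements within $V_0$ have boundaries of size $4$ and satisfy $n-|S|\le4$. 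Sparse cuts do exist here, e.g.\ $\{c_0,a_1,b_1,a_2,b_2\}$ and $V_1\cup\{b_1,b_2,b_3,b_4\}$, but each is a union of several fundamental regions (the second even mixes $V_1$ into $S$), and both satisfy the sparseness inequality only with equality --- so any general existence proof must be global and tight, and nothing in your sketch supplies it. Finally, be aware that the paper sidesteps this issue entirely by using an obstruction your cut bound cannot detect: it takes $f$ fixing $V_0$ pointwise (the identity, or a transposition of two vertices of $V_1$, according to the parity of $|E|$; this is where $|V_1|\ge2$ enters), and argues that if $f=\bigcirc{\bf t}$ then each $x\in V_0$ determines a closed ${\bf t}$-circuit through $x$ with pairwise distinct edges, which cannot lie inside the forest ${\cal G}_0$ and so must use one arc from $V_0$ to $V_1$ and one arc back; the arc-disjointness of Lemma~\ref{PartitionArcs} over all $x\in V_0$ then forces $|V_0|\le2|C|$, contradicting $2|C|<|V_0|$. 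Since that $f$ moves at most two vertices, both inside $V_1$, it violates no inequality of the form $|\{v\in S: vf\notin S\}|\le|\partial S|$; your framework therefore cannot recover the paper's argument, and the sparse-cut existence claim would have to be proved from scratch as a separate combinatorial theorem about forests.
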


\begin{proof} Assume that ${\cal G}$ is perm-complete. Then ${\cal G} = {\cal T}({\bf s})$ for some injective sequence ${\bf s} := \langle s_0,s_1,\ldots, s_{k-1}\rangle$ of transpositions in 
${\rm Sym}(n)$, where of course $|E|=k$. If $|E|$ is even then ${\rm Prod}({\bf s})={\rm Alt}(n)$; so $\iota\restrict n\in{\rm Prod}({\bf s})$. But if $|E|$ is odd then 
$(u\, w)\in{\rm Prod}({\bf s})$ for some elements $u\not= w$  in $V_1$. In both cases there exists $f\in{\rm Prod}({\bf s})$ with $xf=x$ for every $x\in V_0$. The perm-completeness of 
${\bf s}$ implies that $f=\bigcirc{\bf t}$ for some  ${\bf t}\in{\rm Seq}({\bf s})$.  

Let $x$ be an arbirary element in $V_0$.

Since $x = x\bigcirc{\bf t}$, the ${\bf t}$-path $\overrightarrow{{\bf t}_x}$ induces a directed circuit $\overrightarrow{{\bf t}_x^\star}$ that starts and ends at $x$. So, since the subgraph ${\cal G}_0$ is a forest, and since the sequence ${\bf t}$ is injective, we can show that the path $\overrightarrow{{\bf t}_x}$ uses up two edges $e_x \not= l_x$ in $C$ that contribute, to $\overrightarrow{{\bf t}_x^\star}$, an arc $\overrightarrow{e_x}$ from $V_0$ to $V_1$ and another arc $\overrightarrow{l_x}$ back from $V_1$ to $V_0$.

Pretend that the arc $\overrightarrow{e_x}$ occurs not only in the path $\overrightarrow{{\bf t}_x}$, but also in the path $\overrightarrow{{\bf t}_{x'}}$ for some $x'\in V_0\setminus\{x\}$. Then Lemma \ref{PartitionArcs} implies 
that the set of arcs comprising $\overrightarrow{{\bf t}_{x'}}$ is the same collection of arcs that comprise $\overrightarrow{{\bf t}_x}$. Viewed as a subsequence of ${\bf t}$, the word ${\bf t}_{x'}$ is a cyclic conjugate of the word ${\bf t}_{x}$.   

Without loss of generality, take it that $h' \le_{\bf t} h$, where the transpositions $h'$ and $h$ are the first terms in ${\bf t}$, under the ordering $\le_{\bf t}$, to have $x'$ and $x$ in their respective supports.

Assume that $h'=h=(x\,x')$. Then $x\in {\rm supp}(h'_1)$ where $h'_1$ is the term immediately following $h'$ in the subsequence ${\bf t}_{x'}$ of ${\bf t}$. Similarly, $x'\in{\rm supp}(h_1)$, where $h_1$ is the immediate successor of $h$ in the subsequence ${\bf t}_x$. But obviously then $h'_1=h_1=(x\,x')=h$ in violation of the injectivity of the sequence ${\bf t}$. It follows that $h'<_{\bf t} h$. So there is a prefix  
${\bf p}:= \langle h', h'_1, h'_2, \ldots, h'_\bullet, h\rangle$ of the word ${\bf t}_{x'}$, which induces  a digraph $\overrightarrow{{\bf p}^\star}$ whose vertices are the integer endpoints of the arcs in $\overrightarrow{\bf p}$, 
and which extends from $x'\in{\rm supp}(h')$ to $x\in{\rm supp}(h)$. 

Of course ${\bf p}$ is a subsequence of ${\bf t}$. Notice that ${\rm supp}(h'_\bullet)\cap{\rm supp}(h)= \{x\}$, and that  $t_0 \le_{\bf t} h'\le_{\bf t}h'_\bullet <_{\bf t} h$, where $t_0$ is the first term in the sequence ${\bf t}$. That $x\in{\rm supp}(h'_\bullet)$ violates a manufacturing criterion for the sequence ${\bf t}_x$; to wit: Under the ordering $<_{\bf t}$, the first term in ${\bf t}_x$ was specified to be the first term in the sequence ${\bf t}$, having $x$ in its support. That first term of ${\bf t}_x$ is $h>_{\bf t} h'_\bullet$. So $f\not\in{\rm Prod}({\bf s})$. Having verified that each vertex in $V_0$ uses up (at least) two edges in $C$ if indeed $f=\bigcirc{\bf t}$, we infer that 
$|V_0|\le 2|C|$ if ${\bf s}$ is perm-complete. So, since $2|C|<|V_0|$ by hypothesis, we conclude that ${\bf s}$ is not perm-complete.    \end{proof} 

A modification of the proof of Theorem \ref{NotPermComplete1} will establish

\begin{cor}\label{WeakerNPC1} Let the hypothesis $2|C|<V_0$ in Theorem \ref{NotPermComplete1} be replaced by the hypothesis $|C|\le V_0$, but let the other hypotheses of the theorem hold. Then  ${\cal G}$ fails to be  
perm-complete.\end{cor}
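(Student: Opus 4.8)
The plan is to re-run the argument of Theorem~\ref{NotPermComplete1} almost verbatim, replacing only its wasteful final count by a sharp one. Assuming $\mathcal{G}$ perm-complete, write $\mathcal{G}=\mathcal{T}(\mathbf{s})$ for an injective transpositional $\mathbf{s}$ and, exactly as before, produce an $f\in\mathrm{Prod}(\mathbf{s})$ fixing every vertex of $V_0$ (the identity when $|E|$ is even; a transposition $(u\,w)$ with $u,w\in V_1$ when $|E|$ is odd, which is where $|V_1|\ge2$ is used). Write $f=\bigcirc\mathbf{t}$ with $\mathbf{t}\in\mathrm{Seq}(\mathbf{s})$. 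For each $x\in V_0$ the condition $x=x\bigcirc\mathbf{t}$ makes $\overrightarrow{\mathbf{t}_x}$ a closed circuit; since $\mathcal{G}_0$ is a forest this circuit cannot stay inside $V_0$, so it leaves $V_0$ along an out-arc of some $e_x\in C$ and returns along an in-arc of a distinct $l_x\in C$. The cyclic-conjugate sub-argument of Theorem~\ref{NotPermComplete1}, together with Lemma~\ref{PartitionArcs}, gives that distinct $x\in V_0$ determine arc-disjoint circuits.

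The modification is a one-line sharpening of the count. Theorem~\ref{NotPermComplete1} charges each circuit a single edge of $C$ and observes each edge carries at most two circuits, yielding $|V_0|\le 2|C|$. Instead I would charge each circuit \emph{both} of its $C$-arcs $\overrightarrow{e_x}$ and $\overrightarrow{l_x}$: these $2|V_0|$ arcs are pairwise distinct (the circuits are arc-disjoint, and an out-arc is never an in-arc) and all lie among the $2|C|$ arcs that $C$ contributes to $\overrightarrow{\mathcal{G}}$. Hence $2|V_0|\le 2|C|$, i.e.\ $|V_0|\le|C|$.

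Combined with the new hypothesis $|C|\le|V_0|$, this forces $|V_0|=|C|$ and pins the extremal configuration down completely: every arc of $C$ lies on a $V_0$-circuit, each $V_0$-circuit crosses $C$ exactly once each way, and consequently, by Lemma~\ref{PartitionArcs}, \emph{no} path $\overrightarrow{\mathbf{t}_v}$ issuing from a $v\in V_1$ may use an arc of $C$. Every such path is therefore trapped inside $\mathcal{G}_1$, and its first term must be an edge of $\mathcal{G}_1$. Applying Lemma~\ref{PartitionArcs} to $\overrightarrow{\mathcal{G}_1}$, its $2|E_1|$ arcs are partitioned among the $\mathcal{G}_1$-segments of the $\overrightarrow{\mathbf{t}_v}$: a nonempty segment for each vertex of $V_1$ (each must be born there), plus one re-entry segment per $V_0$-circuit. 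The contradiction I aim for is that these forced segments cannot all be accommodated: when $|E|$ is even, $f=\iota$ puts \emph{every} vertex of $V_1$ on its own arc-disjoint closed trail in $\mathcal{G}_1$; when $|E|$ is odd the two moved vertices of $V_1$ contribute open trails; in either case these trails, together with the $V_1$-segments of the $V_0$-circuits, oversubscribe the $2|E_1|$ available arcs and leave some mandatory path with no admissible initial arc.

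I expect this last step to be the main obstacle. The reduction to $|V_0|=|C|$ is routine once the count is sharpened, and several instances of the extremal case collapse immediately: a singleton tree-component of $\mathcal{G}_0$ produces a degree-$1$ vertex, excluded by Theorem~\ref{Deg1}; a $v\in V_1$ with no neighbour in $V_1$ leaves $\overrightarrow{\mathbf{t}_v}$ with no first arc; and the even case puts every $V_1$-vertex on its own confined closed trail. The difficulty is to show \emph{uniformly}, for every $\mathcal{G}_1$ and both parities of $|E|$, that the trapped $V_1$-paths and the $V_1$-segments of the $V_0$-circuits cannot be fitted simultaneously into $\overrightarrow{\mathcal{G}_1}$. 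I would attack this by an arc-balance bookkeeping at each $q\in V_1$, equating via Lemma~\ref{PartitionArcs} the $\mathcal{G}_1$-arcs entering and leaving $q$ against the $C$-crossings that $q$ absorbs, and counting how many paths must be born inside $\mathcal{G}_1$ versus how many initial arcs $\mathcal{G}_1$ can supply.
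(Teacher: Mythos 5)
Your sharpened count is correct and is indeed the natural first modification: each circuit $\overrightarrow{{\bf t}_x}$ with $x\in V_0$ must, because ${\cal G}_0$ is a forest and ${\bf t}$ is injective, use an arc of $C$ from $V_0$ into $V_1$ and an arc of $C$ from $V_1$ back into $V_0$; by Lemma \ref{PartitionArcs} distinct $x$ claim disjoint arc sets; and $C$ supplies only $|C|$ arcs in each direction, whence $|V_0|\le|C|$. But under the corollary's hypothesis this yields only the extremal configuration $|V_0|=|C|$, and there your proposal has a genuine gap, which you yourself flag. Worse, the route you sketch for closing it cannot succeed: there is no ``oversubscription'' of the arcs of $\overrightarrow{{\cal G}_1}$ to be found. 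Lemma \ref{PartitionArcs} makes the trapped $V_1$-paths together with the (possibly empty) $V_1$-segments of the $V_0$-circuits \emph{exactly} partition the arcs of ${\cal G}_1$; that is a tautology, not a constraint that can fail. Once every vertex of $V_1$ has at least one neighbour inside $V_1$ — the only situation your listed special cases do not already dispose of — no bookkeeping of arcs entering and leaving the vertices of $V_1$, in either parity of $|E|$, produces a contradiction, because the obstruction in the extremal case is not numerical at all.

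The obstruction is an ordering phenomenon in $<_{\bf t}$, of the same kind as the ``manufacturing criterion'' step inside the proof of Theorem \ref{NotPermComplete1}. In the extremal case every $V_1$-to-$V_0$ arc of $C$ lies on some circuit $\overrightarrow{{\bf t}_x}$ with $x\in V_0$. Now recall that, by construction, the arcs of a path $\overrightarrow{{\bf t}_x}$ come from terms of ${\bf t}$ taken in strictly increasing $<_{\bf t}$ order, and that such a path uses at most one of the two arcs of any given edge. Since the circuit starts at $x\in V_0$, before it can traverse any arc from $V_1$ back into $V_0$ it must strictly earlier (under $<_{\bf t}$) have traversed an arc of some \emph{other} edge of $C$ to get out of $V_0$. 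Let $e^*$ be the $<_{\bf t}$-least edge of $C$ (nonempty by hypothesis). The arc of $e^*$ pointing from $V_1$ to $V_0$ would then have to be preceded, on whichever $V_0$-circuit contains it, by a $C$-edge strictly $<_{\bf t}$-earlier than $e^*$ — impossible. So that arc lies on the path $\overrightarrow{{\bf t}_v}$ of some $v\in V_1$, contradicting the extremal configuration. Inserting this single observation after your sharpened count completes the corollary; the paper itself offers nothing beyond the assertion that ``a modification of the proof of Theorem \ref{NotPermComplete1}'' suffices, and the count alone, stopping as it does at $|V_0|=|C|$, is not yet such a modification.
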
 

\begin{thm}\label{NotPermComplete2} Let ${\cal G}_0$ and ${\cal G}_1$ be connected graphs on the disjoint vertex sets $V_0$ and $V_1$, with $V_0\cup V_1=n$ and $\min\{|V_0|,|V_1|\}\ge2$. 
Let $C$ be a nonempty set of edges, each of which has one of its vertices in $V_0$ and the other in $V_1$. Let ${\cal G}:=\langle n;E\rangle={\cal G}_0\,{\cup}\,C\,{\cup}\,{\cal G}_1$.  
Let $|C|< \min\{|V_0|,|V_1|\}$. Then ${\cal G}$ is not perm-complete.      \end{thm}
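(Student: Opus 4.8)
The plan is to argue by contradiction through a global ``crossing count'': I will show that every product realizable from $\mathcal{G}$ can transfer only boundedly many vertices across the cut $C$, and then exhibit a permutation of the forced parity that transfers strictly more. So assume $\mathcal{G}$ is perm-complete, and fix an injective transpositional sequence $\mathbf{u}$ with $\mathcal{T}(\mathbf{u})=\mathcal{G}$; by perm-completeness $\text{Prod}(\mathbf{u})$ is all of $\text{Alt}(n)$ or all of $\text{Blt}(n)$. For a product $g=\bigcirc\mathbf{r}$ with $\mathbf{r}\in\text{Seq}(\mathbf{u})$, the quantity I would track is the transfer number $\tau(g):=|\{x\in V_0:xg\in V_1\}|$.

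First I would establish the bound $\tau(g)\le|C|$ for every $g\in\text{Prod}(\mathbf{u})$. The key tool is Lemma \ref{PartitionArcs}: the $\mathbf{r}$-paths $\{\overrightarrow{\mathbf{r}_x}:x\in n\}$ partition the arcs of $\overrightarrow{\mathcal{G}}$, and $\overrightarrow{\mathbf{r}_x}$ traces the trajectory of $x$, running from $x$ to $x\bigcirc\mathbf{r}=xg$. If $x\in V_0$ and $xg\in V_1$, then the walk $\overrightarrow{\mathbf{r}_x}$ must traverse at least one arc directed from a $V_0$-vertex to a $V_1$-vertex; since every edge of $\mathcal{G}_0$ lies inside $V_0$ and every edge of $\mathcal{G}_1$ inside $V_1$, such an arc is necessarily the $V_0\rightarrow V_1$ orientation of an edge of $C$. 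Assigning to each transferred $x$ the first such crossing arc on its path, and invoking the arc-disjointness furnished by Lemma \ref{PartitionArcs}, I obtain an injection from the set of transferred vertices into the set of $V_0\rightarrow V_1$ cut arcs, of which there are exactly $|C|$. Hence $\tau(g)\le|C|$.

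Next I would contradict this bound by constructing, inside the relevant coset, a permutation of large transfer number. Since $|C|<\min\{|V_0|,|V_1|\}$, choose $A\subseteq V_0$ and $B\subseteq V_1$ with $|A|=|B|=|C|+1$, and define $g$ to interchange the blocks $A$ and $B$ (fixing everything else), so that $\tau(g)=|C|+1$. To match the parity dictated by $\text{Prod}(\mathbf{u})$, I would realize $g$ either as $|C|+1$ disjoint crossing transpositions $a_i\leftrightarrow b_i$, of sign $(-1)^{|C|+1}$, or, to flip parity, by fusing two of these into a single $4$-cycle $(a_1\ b_1\ a_2\ b_2)$ while leaving the rest as transpositions. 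Both versions keep every element of $A$ mapped into $V_1$, so $\tau(g)=|C|+1$ in either case, and the fusion is available because $C\ne\emptyset$ forces $|C|+1\ge2$. Perm-completeness then places this $g$ in $\text{Prod}(\mathbf{u})$, yet $\tau(g)=|C|+1>|C|$, contradicting the bound of the previous step; thus $\mathcal{G}$ is not perm-complete.

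I expect the main obstacle to be the transfer bound $\tau(g)\le|C|$, specifically the care needed in extracting from each transferred vertex a \emph{distinct} cut arc: one must check that the ``first $V_0\rightarrow V_1$ arc'' is well defined on each path, that it is genuinely a cut arc rather than an internal arc, and that distinctness follows cleanly from the arc-partition of Lemma \ref{PartitionArcs}. By contrast, the parity bookkeeping in the construction step is routine, resting only on the observation that a $4$-cycle and a pair of disjoint transpositions differ in sign while preserving which block each point lands in. Note that, unlike Theorem \ref{NotPermComplete1}, no forest hypothesis is needed: the counting argument tolerates cycles inside $\mathcal{G}_0$ and $\mathcal{G}_1$ because it bounds crossings globally rather than per vertex.
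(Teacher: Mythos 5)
Your proposal is correct and follows essentially the same route as the paper's own proof: assume perm-completeness, build a block-swapping permutation of the forced parity (disjoint crossing transpositions, amalgamating two into a $4$-cycle $(a_1\,b_1\,a_2\,b_2)$ when the sign must be flipped --- exactly the paper's device in its even-$|C|$ subcase), and contradict via Lemma \ref{PartitionArcs} by matching each transferred vertex to a private cut arc. The only differences are presentational: you count one-directional transfers ($|C|+1$ vertices versus $|C|$ arcs from $V_0$ to $V_1$) where the paper counts both directions ($2|C|+2$ paths versus $2|C|$ arcs), and you unify the paper's three parity cases into a single construction.
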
 

\begin{proof} Let $c := |C|<\min\{m,p\}$ where $V_0=\{x_0,x_1,\ldots, x_{m-1}\}$ and where $V_1=\{y_0,y_1,\ldots, y_{p-1}\}$. Assume that ${\cal G}$ is perm-complete. Then 
${\cal G = T}({\bf s})$ for some sequence {\bf s} of transpositions in Sym$(n)$.\vspace{.5em} 

\underline{Case}: $|E|$ is odd. Let $f:=(x_0\, y_0\, x_1\, y_1\,\ldots\, x_c\, y_c) = f\in{\rm Blt}(n)$. Choose ${\bf r}\in$ Seq$({\bf s})$ such that $f=\bigcirc{\bf r}$. By Lemma 
\ref{PartitionArcs}, each of the $2c+2$ distinct paths ${\bf r}_z$ in $\overrightarrow{\cal G}$, one for each $z\in\{x_0,y_0,\ldots, x_c,y_c\}$, contains an arc in $\overrightarrow{\cal C}$ that is 
contained in no ${\bf r}_{z'}$ with $z'\in\{x_0,y_0,\ldots, x_c,y_c\}\setminus\{z\}$. But $\overrightarrow{\cal C}$ has only $2c$ arcs in all. Hence,  $f\notin$ Prod$({\bf s})$. Thus we see that 
${\cal G}$ fails to be perm-complete in the case that $|E|$ is odd. \vspace{.5em}

\underline{Case}: $|E|$ is even. \vspace{.3em}

{\it Subcase}: $c$ is odd. Let $g := (x_0\, y_0)(x_1\, y_1)\dots(x_c\, y_c)\in{\rm Alt}(n)$. Choose ${\bf w}\in{\rm Seq}({\bf s})$ for which $g=\bigcirc{\bf w}$. 
As in the odd $|E|$ case, each of the $2c+2$ paths ${\bf w}_z$ in $\overrightarrow{\cal G}$ for the $z\in\{x_0,y_0,x_1,y_1,\ldots, x_c,y_c\}$ uses an arc in $\overrightarrow{\cal C}$ that is 
contained in no path ${\bf w}_{z'}$ with $z'\in\{x_0,y_0,x_1,y_1,\ldots, x_c,y_c\}\setminus\{z\}$ -- an impossibility since $\overrightarrow{\cal C}$ has only $2c$ arcs. So $g\notin$ Prod$({\bf s})$. 
We infer that here too ${\cal G}$ is not perm-complete.\vspace{.3em}

{\it Subcase}: $c$ is even. We amalgamate two $2$-cycles of $g$ to create a $4$-cycle, thus producing the even permutation $h := (x_0\, y_0\, x_1\, y_1)(x_2\, y_2)\ldots(x_c\, y_c)$. 
Having assumed $h\in$ Prod$({\bf s})$, we can choose ${\bf u}\in$ Seq$({\bf s})$ for which $h=\bigcirc{\bf u}$. Once again we have that the set of $2c+2$ paths ${\bf u}_z$ is obliged to 
use $2c+2$ arcs in $\overrightarrow{\cal C}$, but cannot do so since $\overrightarrow{\cal C}$ has only $2c$ arcs. Again we get that ${\cal G}$ is not perm-complete.  \end{proof}

\subsection{Criteria ensuring permutational completeness}

When ${\cal G} := \langle n;E\rangle$ is a graph with vertex set $n$ and edge set $E$, and when $W\subseteq n$, then $\langle W\rangle$ denotes the subgraph $\langle W;D\rangle$ of ${\cal G}$ 
whose vertex set is $W$, and whose edge set $D$ consists of every edge $(x\, y)\in E$ for which $\{x,y\}\subseteq W$. This subgraph $\langle W\rangle$ of ${\cal G}$ is said to be {\em induced} by 
$W$ in ${\cal G}$.

We say that a subgraph  ${\cal S}$ of a graph ${\cal H}$ {\em spans} ${\cal H}$ iff the vertex set of ${\cal S}$ is that of ${\cal H}$. If a subgraph ${\cal S}$ of ${\cal H}$ spans ${\cal H}$, and if 
no two distinct edges of ${\cal S}$ share a vertex, then we call ${\cal S}$ a {\em perfect matching} for ${\cal H}$.

\begin{thm}\label{Complicated} For {\bf t} an injective perm-complete transpositional sequence in {\rm Sym}$(n)$, let ${\cal G} := \langle n;E\rangle = {\cal T}({\bf t})$. Let $\emptyset \not= 
W\subseteq n$, and let $x\notin n$ be a new vertex. Let ${\cal H} :=\langle V_0;E_0\rangle$ be the simple supergraph of ${\cal G}$ for which $V_0 := n\cup\{x\}$ is the vertex set of ${\cal H}$, and 
where $E_0 := E\cup\{(x\,w): w\in W\}$ is the edge set of ${\cal H}$. Let {\bf s} be an injective transpositional sequence in {\rm Sym}$(V_0)$ such that $(p\, q)$ is a term of {\bf s} if and only if 
$(p\, q)\in E_0$. Let the integer $|E_0|$ be even(odd). Given a permutation $f\in{\rm Sym}(V_0)$ that is, correspondively, even(odd): 

{\bf 2.8.1} \ If $xf\in W$ then $f\in$ {\rm Prod}$({\bf s})$. 

{\bf 2.8.2} \ If $w_0f=w_1\not=w_0$ for some $\{w_0,w_1\}\subseteq W$, then $f\in$ {\rm Prod}$({\bf s})$. 

{\bf 2.8.3} \ If $\langle W\rangle$ contains a perfect matching, and if $xf=x$ as well, then $f\in$ {\rm Prod}$({\bf s})$.         \end{thm}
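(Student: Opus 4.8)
The plan is to treat all three parts within a single framework: I will always realize $f$ by an arrangement $\mathbf{r}\in\mathrm{Seq}(\mathbf{s})$ of the shape ``one contiguous block of the $E$-edges, together with the $m:=|W|$ star edges $(x\,w)$ placed around or inside it''. Two standing observations drive everything. First, if some of the star edges are composed as a single contiguous block, say $(x\,w_{i_1})\circ\cdots\circ(x\,w_{i_j})$ in that order, their product is the cycle $(x\,w_{i_1}\,w_{i_2}\,\cdots\,w_{i_j})$; so I may prescribe the image of $x$ under such a block to be any star-neighbour I choose, and the block has parity $(-1)^{j}$. Second, because $\mathbf{t}$ is perm-complete, $\mathrm{Prod}(\mathbf{t})$ is exactly the set of permutations of $n$ of parity $(-1)^{|E|}$, and every such permutation fixes $x$ when viewed in $\mathrm{Sym}(V_0)$. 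Hence, once the positions and orders of the star edges are fixed, it suffices to check that the uniquely-determined $E$-block factor $Q$ both fixes $x$ and has parity $(-1)^{|E|}$, for then $Q=\bigcirc\mathbf{t}'$ for some arrangement $\mathbf{t}'$ of the $E$-edges. The parity check is automatic throughout, since the star blocks contribute $m$ transpositions in total while $\mathrm{par}(f)=(-1)^{|E_0|}=(-1)^{|E|+m}$.

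For 2.8.1 I would take $\mathbf{r}=\langle\mathbf{t}',\,(x\,w^\ast),\,\text{(remaining star edges)}\rangle$, where $w^\ast:=xf\in W$. Writing $P$ for the trailing star block, its product begins $x\mapsto w^\ast$, and the equation $Q\circ P=f$ forces $Q=f\circ P^-$; then $xQ=(xf)P^-=w^\ast P^-=x$, as required. For 2.8.2 I would instead split one star edge off to the front: $\mathbf{r}=\langle(x\,w_0),\,\mathbf{t}',\,(x\,w_1),\,\text{(remaining star edges)}\rangle$, so that the $E$-block factor is forced to be $Q=(x\,w_0)\circ f\circ P'^-$, where $P'$ is the trailing star block with $xP'=w_1$. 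The hypothesis $w_0f=w_1$ gives $xQ=(w_0f)P'^-=w_1P'^-=x$, so again $Q\in\mathrm{Prod}(\mathbf{t})$ and $\bigcirc\mathbf{r}=f$. In both parts the order of the remaining star edges is immaterial.

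The substance of the theorem is 2.8.3, where the two-block device above is not enough: forcing $Q$ to fix $x$ by the previous method requires $f$ to carry some vertex of $W$ to a \emph{different} vertex of $W$, which can fail precisely when $xf=x$. The idea I would use instead is the local gadget $(x\,a)\circ(a\,b)\circ(x\,b)=(a\,b)$, valid for any edge $(a\,b)$ of $\langle W\rangle$: this three-term product fixes $x$ and equals, on $n$, exactly the transposition $(a\,b)$ it is built around. Let $M=\{(a_1\,b_1),\dots,(a_\ell\,b_\ell)\}$ be a perfect matching of $\langle W\rangle$, so $m=2\ell$ and $f|_n$ has parity $(-1)^{|E|}$; by perm-completeness choose an arrangement $\mathbf{t}^\ast$ of \emph{all} the $E$-edges with $\bigcirc\mathbf{t}^\ast=f|_n$. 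In $\mathbf{t}^\ast$ each matching edge $(a_i\,b_i)$ occurs once; I would replace that single term by the three-term gadget, inserting $(x\,a_i)$ immediately before it and $(x\,b_i)$ immediately after. Since the gadget equals the factor it replaces, the overall product is unchanged and still equals $f$; and since $M$ covers $W$, every star edge is inserted exactly once, so the resulting word lies in $\mathrm{Seq}(\mathbf{s})$.

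The main obstacle to anticipate is exactly the one this gadget circumvents. A naive approach to 2.8.3 would pull the matching edges out of the $E$-block and ask the remaining edges $E\setminus M$ to realize $\pi\circ f$, with $\pi:=(a_1\,b_1)\cdots(a_\ell\,b_\ell)$; but $E\setminus M$ need not be perm-complete, so this need not be realizable. The insertion trick sidesteps the difficulty by keeping all of $E$ inside the single perm-complete block $\mathbf{t}^\ast$ and letting each gadget merely decorate a matching edge already present, so that only the perm-completeness of $\mathbf{t}$ is invoked. I would then verify just the composition bookkeeping -- that the three-term gadget equals $(a\,b)$ under the paper's left-to-right convention, and that distinct matching edges, being vertex-disjoint, decorate without interfering -- after which all three cases are complete.
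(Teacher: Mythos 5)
Your proposal is correct, and all three constructions realize $f$ by exactly the arrangements the paper uses; the difference is only in how membership is justified in the first two parts. For 2.8.3 your argument is the paper's argument verbatim: the paper also takes an arrangement $\mathbf{r}$ of $\mathbf{t}$ with $\bigcirc\mathbf{r}=f|_n$ and replaces each matching term $(x_i\,y_i)$ by the three-term gadget $\langle(x\,x_i),(x_i\,y_i),(y_i\,x)\rangle$, noting that a perfect matching of $\langle W\rangle$ makes every star edge appear exactly once. For 2.8.1 and 2.8.2 the paper builds the same words --- the $E$-block followed by the star edges with the distinguished edge $(w_0\,x)$ leading, respectively one star edge split off to the front --- but instead of solving for the block as you do ($Q=f\circ P^-$, then checking $xQ=x$ and parity), it defines the sets $Q$ and $P$ of all such products, exhibits bijections $\varphi,\psi$ from $\mathrm{Prod}(\mathbf{t})$ onto them, counts $|Q|=|M|=n!/2$ (and $|P|=|L|=n!/2$) via explicit cardinality-preserving maps into $\mathrm{Blt}(n)$ and $\mathrm{Blt}(V_0\setminus\{w_1\})$, and concludes the set equalities $Q=M$ and $P=L$. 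Your direct inversion is the more elementary route: it needs only the observation that perm-completeness makes $\mathrm{Prod}(\mathbf{t})$ a full parity class, and it dispenses with the counting entirely. What the paper's counting buys in exchange is the slightly stronger conclusion that those particular arrangement shapes already exhaust \emph{all} even permutations sending $x$ into $W$ (resp.\ moving $w_0$ to $w_1$), not merely that each such permutation is realized by some arrangement. Your identification of 2.8.3 as the part where the two-block device genuinely fails, and of the gadget as the way around pulling the matching out of the perm-complete block, matches the paper's logic precisely.
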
  

\begin{proof}  We establish the theorem for the case where $|E_0|$ is even, and omit the (identical) proof for the case where $|E_0|$ is odd. So now let $|E_0|$ be even. Since ${\cal G}$ 
is perm-complete, we have that $|{\rm Prod}({\bf t})|=n!/2$.  

Let $W := \{w_0,w_1,\ldots, w_{k-1}\}\subseteq n$ with $|W|=k$.  We write $f^+ := f\cup\{\langle x,x\rangle\}\in$ Sym$(V_0)$; \ i.e., $f^+$ is just $f$ 
augmented by the 1-cycle $(x)$.\vspace{.5em}

 To prove 2.8.1, let  \[Q := \{h^+\circ(w_0\, x)\circ(w_1\, x)\circ\cdots\circ(w_{k-1}\, x): h\in\,\mbox{Prod}({\bf t})\}.\] Define $\varphi:$ Prod$({\bf t})\rightarrow Q$ by 
$\varphi(h) := h^+\circ(w_0\, x)\circ(w_1\, x)\circ\cdots\circ(w_{k-1}\, x)$. Plainly $\varphi$ is a bijection from Prod$({\bf t})$ onto $Q$. It follows that $|Q|=n!/2$. Now let $M := \{g: xg=w_0$ and 
$g\in$ Alt$(V_0)\}$. Observe that $Q\subseteq M$. 

Given $g\in M$, we have $\{\langle x,w_0\rangle,\langle z_g,x\rangle\}\subseteq g$ for some $z_g\in n$. Let $g^* := (g\setminus\{\langle x,w_0\rangle,\langle z_g,x\rangle\})\cup\{\langle z_g,w_0\rangle\}$. 
The function $^*:g\mapsto g^*$ obviously maps $M$ bijectively onto Blt$(n)$. Hence $|M|=n!/2$. Therefore $Q=M$. But $Q\subseteq$ Prod$({\bf s})$. The assertion 2.8.1 follows.\vspace{.5em}   

To prove 2.8.2, let $P := \{(w_0\, x)\circ h^+\circ(w_1\, x)\circ(w_2\, x)\circ\cdots\circ(w_{k-1}\, x): h\in{\rm Prod}({\bf t})\}$. Define the function $\psi:{\rm Prod}({\bf t})\rightarrow P$ by $\psi(h) :=         
(w_0\, x)\circ h^+\circ(w_1\, x)\circ(w_2\, x)\circ\cdots\circ(w_{k-1}\, x)$. Notice that $\psi$ is a bijection from ${\rm Prod}({\bf t})$ onto $P$. So $|P|=n!/2$. Let $L:=\{g: w_0g=w_1$ and $g\in{\rm Alt}(V_0)\}$. 
Then $P\subseteq L$.  

For $g\in L$, let $y_g:=w_1g$. Let $g^\dag := (g\setminus\{\langle w_0,w_1\rangle,\langle w_1,y_g\rangle\})\cup\{\langle w_0,y_g\rangle\}$. The function $^\dag:g\mapsto g^\dag$ obviously maps 
$L$ bijectively onto Blt$(V_0\setminus\{w_1\})$. However, $|V_0\setminus\{w_1\}| = n$. So $|L|=n!/2$. Thus $P=L$. But $P\subseteq$ Prod$({\bf s})$. The assertion 2.8.2 follows. \vspace{.5em}

To prove 2.8.3, take $|W|=k=2m\ge2$ to be even, and let $A:=\{(x_0\,y_0),(x_1\,y_1),\ldots,(x_{m-1}\,y_{m-1})\}$ be a perfect matching of $\langle W\rangle$. Since ${\cal H}$ has an even number 
of edges, ${\cal G}$ also has an even number of edges. Thus Prod$({\bf t}) =$ Alt$(n)$. So it suffices to show for each $h\in{\rm  Alt}(n)$ that $h^+=\bigcirc{\bf s}\in{\rm Prod}(V_0)$ for some sequence 
{\sf s} such that ${\cal H=T}({\bf s})$.  

Let $h\in{\rm Alt}(n)$. Choose ${\bf r}\in{\rm Seq}({\bf t})$ such that $h=\bigcirc{\bf r}$. We expand the length-$|{\bf t}|$ sequence {\bf r} to a sequence {\bf s} of $|{\bf t}|+2m$ distinct transpositions in Sym$(V_0)$, by 
replacing each of the $m$ special terms, $(x_i\, y_i)$, in {\bf r} with the corresponding three-term sequence $\langle(x\, x_i),(x_i\, y_i),(y_i\, x)\rangle$. Plainly $h^+=\bigcirc{\bf s}$. Therefore $h^+\in{\rm Prod}({\bf s})$. 
The assertion 2.8.3 follows.  \end{proof}

\begin{cor}\label{Rectangle} A rectangle with one of its two diagonals is a minimal perm-complete transpositional graph.      \end{cor}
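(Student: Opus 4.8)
The plan is to produce the graph via the construction of Theorem~\ref{Complicated} and then to dispatch minimality through the two earlier non-perm-completeness criteria. Label the vertices $0,1,2,3$ so that the diagonal is $(0\,2)$ and the rectangle is the $4$-cycle $0\,1\,2\,3$; then the edge set is $\{(0\,1),(1\,2),(2\,3),(0\,3),(0\,2)\}$, i.e.\ the graph is ${\cal K}_4$ with the edge $(1\,3)$ removed. I would regard it as the perm-complete injective base graph ${\cal G}={\cal K}_3$ on $\{0,1,2\}$ (perm-completeness of ${\cal K}_3$ is recorded in the Remark following Theorem~\ref{Deg2}) together with a new vertex $x:=3$ joined to $W:=\{0,2\}$. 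This is exactly the supergraph ${\cal H}$ of Theorem~\ref{Complicated}. Since $|E_0|=5$ is odd, perm-completeness of ${\cal H}$ means ${\rm Prod}({\bf s})={\rm Blt}(4)$, so I must show that every odd $f\in{\rm Sym}(4)$ lies in ${\rm Prod}({\bf s})$.

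Next I would test the twelve odd permutations against the three clauses of Theorem~\ref{Complicated}. Clause 2.8.1 handles every odd $f$ with $3f\in\{0,2\}$; clause 2.8.2 handles those with $0f=2$ or $2f=0$; and because $\langle W\rangle$ is the single edge $(0\,2)$, hence a perfect matching, clause 2.8.3 handles those with $3f=3$. A brief enumeration shows these clauses capture eleven of the twelve odd permutations, the lone survivor being the transposition $(1\,3)$ that interchanges the two non-adjacent degree-$2$ vertices. This single leftover is the real obstacle, since the generic clauses provably cannot reach it; I would clear it by exhibiting a rearrangement of ${\bf s}$ that realizes it, verifying directly that
\[(0\,2)\circ(2\,3)\circ(0\,1)\circ(0\,3)\circ(1\,2)=(1\,3),\]
a product using all five edges each exactly once, so $(1\,3)\in{\rm Prod}({\bf s})$ as well. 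Hence ${\rm Prod}({\bf s})={\rm Blt}(4)$ and ${\cal H}$ is perm-complete.

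For minimality I would exploit the automorphism group of the graph ${\cal K}_4$ with $(1\,3)$ deleted, which is generated by the permutations $(0\,2)$ and $(1\,3)$ and partitions the five edges into just two orbits: the diagonal $\{(0\,2)\}$ and the four rectangle edges. Hence it suffices to delete one representative of each orbit. Removing the diagonal leaves the $4$-cycle $0\,1\,2\,3$, a connected graph on $4$ vertices possessing adjacent degree-$2$ vertices, which is not perm-complete by Theorem~\ref{Deg2}. Removing a rectangle edge, say $(0\,1)$, leaves a connected graph on $4$ vertices in which vertex $1$ has degree $1$, which is not perm-complete by Theorem~\ref{Deg1}. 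Thus no single edge can be deleted without destroying perm-completeness, so the rectangle-with-a-diagonal is minimally perm-complete. The only delicate point in the whole argument is the exceptional permutation $(1\,3)$; everything else is bookkeeping against results already in hand.
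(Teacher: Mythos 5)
Your proposal is correct and takes essentially the same route as the paper's own proof: both apply Theorem~\ref{Complicated} with the perm-complete triangle ${\cal K}_3$ as the base graph to capture every odd permutation except the transposition joining the two non-adjacent degree-$2$ vertices, then realize that one exceptional transposition by an explicit product using each of the five edges once, and obtain minimality from Theorems~\ref{Deg1} and~\ref{Deg2}. Your extra details (the enumeration of the twelve odd permutations against the three clauses, and the automorphism-orbit reduction to two edge-deletion cases) merely make explicit what the paper's proof leaves as obvious.
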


\begin{proof} Let ${\bf t} := \langle(0\, 1),(0\, 2),(0\, 3),(1\, 3),(2\, 3)\rangle$. It is obvious from Theorems \ref{Deg1} and \ref{Deg2} that the removal of any of the five terms of {\bf t} results in a 
transpositional sequence in Sym$(4)$ which is not perm-complete. Therefore it suffices to show that {\bf t} itself is perm-complete. 

Since the triangle graph is perm-complete, Theorem \ref{Complicated} implies that Prod$({\bf t})$ contains every $h\in$ Blt$(4)$ except possibly for the missing diagonal, $(1\, 2)$. But 
$(1\, 2) = (0\, 1)\circ(2\, 3)\circ(0\, 2)\circ(1\, 3)\circ(0\, 3)\in$ Prod$({\bf t})$.      \end{proof}

By a {\em bike} on $n+2$ vertices we mean any graph isomorphic to the labeled graph ${\cal B}_n$, whose edge set has these 2n+1 edges: the ``axle'' $(0\, 1)$ and the $2n$ ``spokes'' 
$(0\, i)$ and $(1\, i)$ for the $i\in\{2,3,\ldots, n, n+1\}$. 

We already observed that the tree with one edge, ${\cal B}_0={\cal K}_2$, and the triangle, ${\cal B}_1={\cal K}_3$, are minimal perm-complete. By Corollary \ref{Rectangle} we have that 
the proper subgraph ${\cal B}_2$ of ${\cal K}_4$ is minimal perm-complete.\vspace{.5em}

As usual, $\omega := \{0,1,2,\ldots\}$. Let $\langle x_1,x_2,\ldots\rangle$ be an injective sequence in $\omega\setminus 2 := \{2,3,4,\ldots\}$. We recursively define an infinite sequence 
$\langle{\bf c}_{(2t)}\rangle_{t=1}^\infty$ of finite sequences of transpositions in Sym$(\omega)$ thus:

${\bf c}_{(2)} := \langle(1\, x_2),(0\, x_2)\rangle$      

${\bf c}_{(2t+2)} := \langle{\bf c}_{(2t)},(0\, x_{2t+1}),(1\, x_{2t+1}),(1\, x_{2t+2}),(0\, x_{2t+2})\rangle$ 

\begin{lem}\label{Bike1} Let ${\bf r}_{(2t)} := \langle(0\, x_1),{\bf c}_{(2t)},(1\, x_1)\rangle$ for $t>0$ an integer. Then $\bigcirc{\bf r}_{(2t)} = (0\, 1)(x_1\, x_2\,\cdots x_{2t})$.    \end{lem}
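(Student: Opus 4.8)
The plan is to induct on $t$. For the base case $t=1$ I would simply evaluate $\bigcirc{\bf r}_{(2)} = (0\,x_1)\circ(1\,x_2)\circ(0\,x_2)\circ(1\,x_1)$ by tracking the images of $0,1,x_1,x_2$ (every other point is fixed by all four transpositions): this yields $0\mapsto1$, $1\mapsto0$, $x_1\mapsto x_2$, $x_2\mapsto x_1$, i.e.\ exactly $(0\,1)(x_1\,x_2)$. Here and throughout I use the paper's left-to-right composition convention $x(f\circ g)=(xf)g$, together with the injectivity of $\langle x_1,x_2,\ldots\rangle$ and $x_i\ge 2$, to guarantee that the relevant labels are mutually distinct and distinct from $0,1$.

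For the inductive step the key move is to express $\bigcirc{\bf r}_{(2t+2)}$ through $\bigcirc{\bf r}_{(2t)}$. Writing $Q := (0\,x_1)\circ\bigcirc{\bf c}_{(2t)}$, the definition of ${\bf r}_{(2t)}$ gives $\bigcirc{\bf r}_{(2t)} = Q\circ(1\,x_1)$, so, since $(1\,x_1)$ is an involution, $Q = \bigcirc{\bf r}_{(2t)}\circ(1\,x_1)$. Substituting the recursive definition of ${\bf c}_{(2t+2)}$ into ${\bf r}_{(2t+2)}$ and factoring out $Q$, I obtain the identity $\bigcirc{\bf r}_{(2t+2)} = \bigcirc{\bf r}_{(2t)}\circ\tau$, where $\tau := (1\,x_1)\circ(0\,x_{2t+1})\circ(1\,x_{2t+1})\circ(1\,x_{2t+2})\circ(0\,x_{2t+2})\circ(1\,x_1)$.

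The next step is to evaluate $\tau$. Tracking the images of $0,1,x_1,x_{2t+1},x_{2t+2}$ through its six factors (all other points being fixed) shows $0\mapsto0$, $1\mapsto1$, and $x_{2t+1}\mapsto x_{2t+2}\mapsto x_1\mapsto x_{2t+1}$; that is, $\tau = (x_{2t+1}\,x_{2t+2}\,x_1)$, a $3$-cycle. Finally I apply the inductive hypothesis $\bigcirc{\bf r}_{(2t)} = (0\,1)(x_1\,x_2\cdots x_{2t})$ and compute the product $(0\,1)(x_1\,x_2\cdots x_{2t})\circ(x_{2t+1}\,x_{2t+2}\,x_1)$. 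The transposition $(0\,1)$ is untouched by $\tau$; the cycle $(x_1\cdots x_{2t})$ and $\tau$ overlap only in $x_1$, and composing them splices $x_{2t+1},x_{2t+2}$ into the big cycle immediately after $x_{2t}$ (since $x_{2t}\mapsto x_1\mapsto x_{2t+1}$, then $x_{2t+1}\mapsto x_{2t+2}\mapsto x_1$), giving $(0\,1)(x_1\,x_2\cdots x_{2t}\,x_{2t+1}\,x_{2t+2})$, as required.

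I expect the only real obstacles to be bookkeeping rather than conceptual: correctly carrying out the ``factor out $Q$'' cancellation under the left-to-right convention, and verifying---via injectivity of the $x$-sequence and $x_i\ge2$---that $x_2$ (and each intermediate $x_{i+1}$ with $i+1\le 2t$) is indeed fixed by $\tau$, so that the cycle-splicing in the last step is valid. Everything else is a routine permutation computation.
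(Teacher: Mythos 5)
Your proposal is correct and follows essentially the same route as the paper: induction on $t$, with the identical decomposition $\bigcirc{\bf r}_{(2t+2)} = \bigcirc{\bf r}_{(2t)}\circ(1\,x_1)\circ(0\,x_{2t+1})\circ(1\,x_{2t+1})\circ(1\,x_{2t+2})\circ(0\,x_{2t+2})\circ(1\,x_1)$ obtained by cancelling against $\bigcirc{\bf r}_{(2t)}$. The only difference is that you explicitly evaluate the tail factor as the $3$-cycle $(x_1\,x_{2t+1}\,x_{2t+2})$ and then splice, whereas the paper compresses this entire computation into a single asserted equality.
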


\begin{proof} Since $\bigcirc{\bf r}_{(2)}=(0\, x_1)\circ(1\, x_2)\circ(0\, x_2)\circ(1\, x_1)= (0\, 1)(x_1\, x_2)$, the basis holds for an induction on $t$.

Now pick $t\ge1$, and suppose that $\bigcirc{\bf r}_{(2t)}=(0\, 1)(x_1\, x_2\,\ldots\,x_{2t})$. Then 
\[ \bigcirc{\bf r}_{(2t+2)} = \bigcirc{\bf r}_{(2t)}\circ(1\, x_1)\circ(0\, x_{2t+1})\circ(1\, x_{2t+1})\circ(1\, x_{2t+2})\circ(0\, x_{2t+2})\circ(1\, x_1)= \]     
\[(0\, 1)(x_1\, x_2\, \ldots\, x_{2t})\circ(1\, x_1)\circ(0\, x_{2t+1})\circ(1\, x_{2t+1})\circ(1\, x_{2t+2})\circ(0\, x_{2t+2})\circ(1\, x_1)  = (0\, 1)(x_1\, x_2\,\ldots\,x_{2t+1}\, x_{2t+2}).  \] 
So $\bigcirc{\bf r}_{(2t+2)} = (0\, 1)(x_1\, x_2\,\ldots\,x_{2t+2})$.      \end{proof}

\begin{thm}\label{Bike2} ${\cal B}_n$ is a minimal perm-complete graph for every nonnegative integer $n$.    \end{thm}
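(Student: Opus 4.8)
The plan is to establish the two halves of ``minimal perm-complete'' separately: first that $\mathcal{B}_n$ is perm-complete, and then that deleting \emph{any} one edge destroys this. Throughout I use that $\mathcal{B}_n$ has $2n+1$ edges, an odd number, so perm-completeness means exactly $\mathrm{Prod}(\mathcal{B}_n)=\mathrm{Blt}(n+2)$. The base cases $\mathcal{B}_0,\mathcal{B}_1$ and (via Corollary~\ref{Rectangle}) $\mathcal{B}_2$ are already in hand, so I would argue perm-completeness by induction on $n$, deleting the ``last'' leaf. The minimality half splits by edge type: deleting a spoke $(0\,i)$ or $(1\,i)$ leaves the leaf $i$ with degree $1$ in a connected graph on $n+2\ge3$ vertices, so Theorem~\ref{Deg1} applies immediately; the only genuinely delicate deletion is that of the axle $(0\,1)$, which leaves the complete bipartite graph $K_{2,n}$.

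For the inductive step of perm-completeness I would regard $\mathcal{B}_n$ as the graph $\mathcal{G}=\mathcal{B}_{n-1}$ on $\{0,1,\dots,n\}$ together with a fresh leaf $n+1$ joined to $W:=\{0,1\}$, and apply Theorem~\ref{Complicated}. Because $\langle W\rangle$ is the single edge $(0\,1)$, which is a perfect matching, the three clauses give: any odd $f$ is in $\mathrm{Prod}$ whenever \emph{some} leaf $j$ has $jf\in\{0,1\}$ (clause 2.8.1 with $x:=j$, using the leaf-transitive symmetry of the bike to choose which leaf is ``new''), or some leaf is fixed (clause 2.8.3), or $0f=1$ or $1f=0$ (clause 2.8.2). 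A short bijectivity argument shows the \emph{only} odd permutations escaping all of these are those that fix both hubs $0,1$ and act as a derangement on the leaf set $\{2,\dots,n+1\}$. Thus everything reduces to realizing each odd leaf-derangement $d$ (hubs fixed) as a product over $\mathcal{B}_n$.

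To realize such a $d$, I would place the axle first, so that the product is $(0\,1)\circ(\text{spoke product})$; since $d$ commutes with $(0\,1)$, it suffices to produce $(0\,1)\cdot d$ from the $2n$ spokes alone. Here Lemma~\ref{Bike1} is the engine: for leaves $x_1,\dots,x_{2t}$ the sequence $\mathbf{r}_{(2t)}$ uses exactly the $4t$ spokes at those leaves and yields $(0\,1)(x_1\cdots x_{2t})$, i.e.\ a hub-swap times one even-length cycle. Decomposing $d$ into disjoint cycles and running one Lemma~\ref{Bike1}-block per cycle, the hub-swaps commute past the (disjoint) leaf-cycles and accumulate as $(0\,1)^{r}$; tracking the parity of the number $r$ of blocks against the parity forced by $d$ being odd, the leftover axle then cancels the residual swap, leaving precisely $d$. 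The main obstacle here is that Lemma~\ref{Bike1} as stated only builds \emph{even-length} cycles, so I must extend the construction to accommodate odd-length cycles of $d$ (and verify the global hub-swap bookkeeping); this adapted multi-cycle version of Lemma~\ref{Bike1} is the technical heart of the perm-completeness direction.

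Finally, for the axle deletion I must show $K_{2,n}$ is not perm-complete. For $n\le2$ this is already covered (path via Theorem~\ref{Deg1}, and $C_4$ via Theorem~\ref{Deg2}), but for $n\ge3$ the degree-$2$ vertices are pairwise non-adjacent and the only balanced cuts are too large, so neither Theorem~\ref{Deg2} nor the cut-set Theorems~\ref{NotPermComplete1}--\ref{NotPermComplete2} apply. Instead I would prove directly the invariant that \emph{no} product of the $2n$ spokes fixes both hubs, whence the identity (an even permutation) lies outside $\mathrm{Prod}(K_{2,n})$ and $K_{2,n}$ cannot be perm-complete. The natural route is Lemma~\ref{PartitionArcs}: in $\overrightarrow{K_{2,n}}$ every arc runs hub--leaf, so every directed circuit alternates hubs and leaves, and I would analyze how the arc-partition induced by a hypothetical hub-fixing product distributes the four arcs at each leaf, deriving a contradiction. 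I expect this invariant to be the principal difficulty of the whole theorem, precisely because it is the mirror image of the construction in the previous paragraph: both the perm-completeness of $\mathcal{B}_n$ and the failure of $\mathcal{B}_n$ minus its axle hinge on pinning down exactly which permutations of $\{0,1,\dots,n+1\}$ the spokes of $K_{2,n}$ can produce.
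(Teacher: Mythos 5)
Your overall architecture (perm-completeness by induction plus minimality split by edge type) matches the paper, but both of your ``delicate'' steps rest on claims that are false or impossible, so the proposal has genuine gaps.

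First, the axle deletion. You assert that the cut-set theorems do not apply to ${\cal G}_n := {\cal B}_n - (0\,1)$ for $n\ge3$; in fact Corollary \ref{WeakerNPC1} applies, and this is exactly how the paper disposes of the axle. The cut you did not find is the asymmetric one: take $V_0 = \{1,3,4,\ldots,n+1\}$ (hub $1$ together with all leaves except $2$), $V_1 = \{0,2\}$, so ${\cal G}_0$ is the star with edges $(1\,j)$, $3\le j\le n+1$ (a forest), ${\cal G}_1$ is the single edge $(0\,2)$, and $C = \{(1\,2)\}\cup\{(0\,j): 3\le j\le n+1\}$ satisfies $|C| = n = |V_0|$ and $|V_1|=2$. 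Worse, the invariant you propose as a substitute --- that \emph{no} ordering of the $2n$ spokes has product fixing both hubs --- is false once $n\ge4$: concatenating two Lemma \ref{Bike1} blocks, say $\langle(0\,2),(1\,3),(0\,3),(1\,2)\rangle$ followed by $\langle(0\,4),(1\,5),(0\,5),(1\,4)\rangle$, uses all eight spokes of $K_{2,4}$ and has product $(0\,1)(2\,3)\circ(0\,1)(4\,5) = (2\,3)(4\,5)$, which fixes both hubs. What is true, and what you actually need, is that no spoke product fixes both hubs \emph{and every leaf}: a closed path in the arc partition of Lemma \ref{PartitionArcs} cannot have length $2$ (that would force the same spoke to occur twice), hence has length at least $4$, so the identity would consume at least $4(n+2) > 4n$ arcs. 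Your sketch, aimed at the false stronger statement, cannot be completed as written.

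Second, the perm-completeness direction. Your reduction via Theorem \ref{Complicated} to odd, hub-fixing leaf-derangements $d$ is sound and parallels the paper, but the cycle-by-cycle realization cannot be repaired. For an odd-length cycle $(x_1\cdots x_\ell)$ of $d$, a block consisting of the $2\ell$ spokes at those leaves is a product of an even number of transpositions, hence an even permutation, while $(0\,1)(x_1\cdots x_\ell)$ is odd; and the hub-fixing alternative $(x_1\cdots x_\ell)$ is also unattainable, at least for $\ell=3$, by the same path count ($4+4+2+2+2 = 14 > 12$ arcs in $K_{2,3}$). So for example $d = (2\,3)(4\,5\,6)$ in ${\cal B}_5$ admits no per-cycle assembly at all: the ``adapted multi-cycle version of Lemma \ref{Bike1}'' your bookkeeping requires does not exist. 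The paper's induction sidesteps this: since $d$ is odd and fixes both hubs, it has at least one \emph{even}-length leaf cycle $(x_1\cdots x_{2t})$; realize only that cycle by ${\bf r}_{(2t)}$, whose product is $(0\,1)(x_1\cdots x_{2t})$, and realize everything else --- namely $(0\,1)$ composed with the remainder of $d$, an odd permutation of the complementary vertices \emph{including both hubs} --- by the inductive hypothesis applied to the sub-bike ${\cal B}_{n-2t}$ on those vertices, which is perm-complete. The axle is consumed inside the sub-bike factor rather than placed out front, and odd cycles of $d$ never need a block of their own. Re-routing your remaining case through that device (and citing Corollary \ref{WeakerNPC1} for the axle) closes both gaps.
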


\begin{proof} Recall that the theorem holds for $0\le n\le2$. So we will establish it for $n\ge3$. We show that the removal of an edge from ${\cal B}_n$ results in a subgraph which fails to 
be perm-complete. So, if ${\cal B}_n$ is perm-complete then it is minimal as such. 

The removal of a spoke from ${\cal B}_n$ results in a subgraph that has a vertex of degree $1$. By Corollary \ref{Deg1}, such a subgraph is not perm-complete. So consider the subgraph 
${\cal G}_n := {\cal B}_n-(0\, 1)$ obtained by removing the axle from ${\cal B}_n$. Now ${\cal G}_n = {\cal G}_{n,0}\dot{\cup}{\cal E}\dot{\cup}{\cal G}_{n,1}$ is a disjoint union, 
where ${\cal G}_{n,1}$ is the one-edge subgraph $(0\, 2)$, where ${\cal G}_{n,0}$ is the tree on the $n$ vertices -- $1,3,4,\ldots n,n+1$ -- and whose edge set is $\{(1\, j): 3\le j\le n+1$, 
and where ${\cal E}$ is the subgraph whose vertex set is all of $n+2$ and whose edge set is $C := \{(1\, 2)\}\cup\{(0\, j):3\le j\le n+1\}$. But $C$ is the cut set connecting ${\cal G}_{n,0}$ 
to ${\cal G}_{n,1}$ to form ${\cal G}_n$. So Corollary \ref{WeakerNPC1} implies ${\cal G}_n $ is not perm-complete.  

It remains only to show that ${\cal B}_n$ perm-complete. The basis of an induction is already established. So pick an integer $n\ge3$, and suppose for any nonnegative $i<n$ that any graph 
isomorphic to ${\cal B}_i$ is perm-complete. Let {\bf s} be a transpositional sequence in Sym$(n+2)$ such that ${\cal B}_n={\cal T}({\bf s})$.

Of course Prod$({\bf s})\subseteq$ Blt$(n+2)$. But we do need to show that Blt$(n+2)\subseteq$ Prod$({\bf s})$.\vspace{.3em}

\underline{Claim}: For every even positive integer $2t\le n$, the set Prod$({\bf s})$ contains every $f\in$ Blt$(n+2)$ which has a cyclic component of length $2t$. \vspace{.3em}

To prove this Claim, pick $2t\in\{2,3,\ldots, n\}$. Let $\langle x_1,x_2,\ldots, x_{2t-1},x_{2t}\rangle$ be any injective sequence in the set $\{2,3,\ldots, n,n+1\}$, and let $X$ be the $(2t)$-membered 
set $\{x_1,x_2,\ldots, x_{2t}\}$. Pick a sequence {\bf v} of transpositions such that ${\cal B}_{n\setminus X} = {\cal T}({\bf v})$, where ${\cal B}_{n\setminus X}$ is the graph obtained by 
removing the $2t$ vertices in $X$ from ${\cal B}_n$. Since ${\cal B}_{n\setminus X}$ is isomorphic to ${\cal B}_{n-2t}$, we have by the inductive hypothesis that ${\cal B}_{n\setminus X}$ is 
perm-complete. It follows that Prod$({\bf v})=$ Blt$((n+2)\setminus X)$.

Let {\bf r} be the transpositional sequence ${\bf r}_{(2t)}$ of Lemma \ref{Bike1}. Define $Q:=\{\bigcirc{\bf r}\circ g: g\in$ Blt$(n\setminus X)\}$. Then, by Lemma \ref{Bike1} we get that 
$Q = \{(x_1\, x_2\, \ldots x_{2t})(0\,1)\circ g: g\in$ Blt$(n\setminus X)\}$. Furthermore, $Q\subseteq$ Blt$(n+2)$. For each $g\in$ Blt$(n\setminus X)$, the concatenation ${\bf rv}_g$ is 
an element in Seq$({\bf s})$, where $g=\bigcirc{\bf v}_g$ for some ${\bf v}_g\in$ Seq$({\bf v})$.  Therefore $Q\subseteq$ Prod$({\bf s})$. Thus, when both $f\in$ Blt$(n+2)$, and $f$ has 
an even length cycle whose support is a subset of $\{2,3,\ldots, n+1\}$, then $f\in$ Prod$({\bf s})$. 

For every $x\in \{2,3,\ldots, n+1\}$, the graph ${\cal T}({\bf a}_x) := {\cal B}_{n\setminus\{x\}}$ is perm-complete by the inductive hypothesis, and hence by Theorem \ref{Complicated}.1 we 
have that Prod$({\bf a}_x)$ contains every $f_x\in$ Blt$((n+2)\setminus\{x\})$ such that $xf_x=x$; those $f_x$ include every one with an even-length cyclic component in $(n+2)\setminus\{x\}$. 
The claim is established.\vspace{.3em}

The theorem follows from the Claim, since every $f\in$ Blt$(n+2)$ has at least one even-length cycle.     \end{proof}

We call a vertex $v$ of a graph ${\cal G}$ {\em central} iff $v$ is adjacent to every other vertex of ${\cal G}$.

\begin{cor}\label{Central} If a connected graph ${\cal G}$ has at least two central vertices then ${\cal G}$ is perm-complete. \end{cor}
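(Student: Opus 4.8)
The plan is to recognize that a connected graph with two central vertices automatically contains a spanning \emph{bike}, and then to combine the perm-completeness of bikes (Theorem \ref{Bike2}) with the supersequence principle (Theorem \ref{SupSeq}). This reduces the corollary to a short structural observation, since all of the genuine work has already been done in establishing that ${\cal B}_n$ is perm-complete.

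First I would fix notation. Let ${\cal G}$ have vertex set of size $N$, and let $u$ and $v$ be two central vertices of ${\cal G}$. After relabeling the vertex set as $\{0,1,\ldots,N-1\}$ with $u=0$ and $v=1$, I claim that the edge set of ${\cal G}$ contains the ``axle'' $(0\,1)$ together with, for each $i\in\{2,3,\ldots,N-1\}$, both ``spokes'' $(0\,i)$ and $(1\,i)$. Indeed, since $0$ is central it is adjacent to $1$ and to every $i\ge2$, yielding the axle and the spokes $(0\,i)$; since $1$ is central it is adjacent to every $i\ge2$, yielding the spokes $(1\,i)$. These $2(N-2)+1$ edges are exactly the edges of a copy of ${\cal B}_{N-2}$, and since every vertex of ${\cal G}$ is an endpoint of at least one of them, this bike spans ${\cal G}$.

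Next I would pass from graphs to sequences. Fix a transpositional sequence ${\bf u}$ in ${\rm Sym}(N)$ with ${\cal T}({\bf u})={\cal G}$, and let ${\bf b}$ be the subsequence of ${\bf u}$ whose terms are precisely the axle and spoke transpositions identified above; then ${\cal T}({\bf b})={\cal B}_{N-2}$. By Theorem \ref{Bike2} the graph ${\cal B}_{N-2}$ is perm-complete, so ${\bf b}$ is a perm-complete sequence in ${\rm Sym}(N)$. Since the terms of ${\bf b}$ form a subset of the terms of ${\bf u}$, the sequence ${\bf u}$ is a supersequence of ${\bf b}$ in ${\rm Sym}(N)$; Theorem \ref{SupSeq} then gives that ${\bf u}$, and hence ${\cal G}$, is perm-complete. (The degenerate case $N=2$ is handled by observing that ${\cal G}={\cal K}_2={\cal B}_0$ directly.)

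I do not anticipate a serious obstacle here: the only point requiring care is verifying that the axle-and-spokes subgraph really is a spanning ${\cal B}_{N-2}$, which is immediate from the definition of a central vertex. Thus the corollary is essentially a translation of the hypothesis ``${\cal G}$ has two central vertices'' into the statement ``${\cal G}$ contains a spanning bike,'' after which Theorems \ref{Bike2} and \ref{SupSeq} finish the argument.
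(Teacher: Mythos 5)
Your proof is correct and follows exactly the route the paper intends: the paper's own proof of this corollary is the one-line remark that it is ``immediate by Theorems \ref{Bike2} and \ref{SupSeq},'' and your argument simply fills in the details of that same reduction (two central vertices yield a spanning copy of ${\cal B}_{N-2}$, Theorem \ref{Bike2} makes the corresponding subsequence perm-complete, and Theorem \ref{SupSeq} lifts perm-completeness to the full sequence). No gaps; your spelled-out version is what the paper leaves implicit.
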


\begin{proof} The corollary is immediate by Theorems \ref{Bike2} and \ref{SupSeq}. \end{proof}

\begin{cor}\label{Complete} Every finite complete graph is perm-complete.  \end{cor}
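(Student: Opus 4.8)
The plan is to derive this immediately from Corollary \ref{Central}, which states that any connected graph with at least two central vertices is perm-complete. So the entire task reduces to verifying that a complete graph satisfies that hypothesis. Recall that a vertex $v$ is called central iff $v$ is adjacent to every other vertex of the graph.

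First I would fix a complete graph ${\cal K}_n$ with $n\ge2$ and observe that, by the very definition of completeness, each of its $n$ vertices is adjacent to all of the remaining $n-1$ vertices. Hence every vertex of ${\cal K}_n$ is central. Since $n\ge2$, there are in particular at least two central vertices, and ${\cal K}_n$ is plainly connected. Corollary \ref{Central} then applies verbatim and yields that ${\cal K}_n$ is perm-complete.

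The only point requiring any care is the degenerate range $n<2$: the graph ${\cal K}_1$ has a single vertex and no edge, and ${\cal K}_0$ is empty, so neither possesses two central vertices and neither carries a nonempty transpositional sequence. These cases lie outside the working hypothesis $n\ge2$ under which $1^{n-2}2^1$ and the notion of perm-completeness were set up in the introduction, so I would either restrict the statement to $n\ge2$ or dispose of the trivial cases in one sentence. Consequently there is no genuine obstacle here; the substance of the corollary is entirely contained in Theorem \ref{Bike2} and Corollary \ref{Central}, and the present argument is simply the remark that completeness forces every vertex to be central, which in turn supplies the two central vertices that Corollary \ref{Central} demands.
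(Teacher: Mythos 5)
Your argument is correct and is exactly what the paper intends: the paper states Corollary \ref{Complete} without proof immediately after Corollary \ref{Central}, since every vertex of a complete graph is adjacent to all others and hence central, which is precisely your observation. Your one-sentence disposal of the degenerate cases $n<2$ is a harmless (and reasonable) addition to what the paper leaves implicit.
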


The following examples provide instances where  the converse of Corollary \ref{Central} fails. 

\begin{prop}\label{Counterexamples1} Each of the following five transpositional sequences is minimally perm-complete: 
\[{\bf a} := \langle(0\, 1),(0\, 2),(0\, 3),(0\, 4),(1\, 2),(2\, 3),(3\, 4)\rangle\qquad {\bf b} := \langle {\bf a},(2\, 5),(3\, 5)\rangle \] 
\[{\bf c}:=\langle(0\,1),(0\,2),(0\,3),(1\,2),(1\,4),(2\,3),(3\,4)\rangle\qquad{\bf d} := \langle{\bf b},(4\,6),(5\,6)\rangle\]   \end{prop}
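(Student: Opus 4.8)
The plan is to verify, for each of ${\bf a}$, ${\bf b}$, ${\bf c}$, ${\bf d}$, the two clauses of Definition~\ref{Minimal}: that ${\cal T}$ of the sequence is perm-complete, and that deleting any single edge destroys perm-completeness. I would organise the four graphs by how each is assembled from a smaller perm-complete graph by adjoining one degree-$2$ apex, which is exactly the situation governed by Theorem~\ref{Complicated}. Deleting the apex vertex $4$ from ${\cal T}({\bf a})$ and from ${\cal T}({\bf c})$ leaves, in both cases, the graph on $\{0,1,2,3\}$ with edges $(0\,1),(0\,2),(0\,3),(1\,2),(2\,3)$, which is isomorphic to ${\cal B}_2$ and hence perm-complete by Corollary~\ref{Rectangle}; the apex $4$ is re-attached with neighbour set $W=\{0,3\}$ for ${\bf a}$ and $W=\{1,3\}$ for ${\bf c}$. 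Similarly ${\cal T}({\bf b})$ is ${\cal T}({\bf a})$ with an apex $5$ on $W=\{2,3\}$, and ${\cal T}({\bf d})$ is ${\cal T}({\bf b})$ with an apex $6$ on $W=\{4,5\}$. So I would prove ${\bf a}$ (and independently ${\bf c}$) perm-complete first, then bootstrap to ${\bf b}$, then to ${\bf d}$.

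For perm-completeness I would feed each apex construction into Theorem~\ref{Complicated}. Every one of the four graphs has an odd number of edges, so the target coset is always ${\rm Blt}$. Theorem~\ref{Complicated}.1 puts into ${\rm Prod}$ each $f$ carrying the apex into $W$, and, since ${\rm Prod}$ is closed under inverses, also each $f$ whose apex-\emph{preimage} lies in $W$; Theorem~\ref{Complicated}.2 captures each $f$ that interchanges the two vertices of $W$. In the two cases ${\bf a}$ and ${\bf b}$ the pair $W$ is actually an edge of the base (namely $(0\,3)$ and $(2\,3)$), so $\langle W\rangle$ has a perfect matching and Theorem~\ref{Complicated}.3 additionally captures every $f$ fixing the apex. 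Only a short list of permutations then remains uncovered, and I would dispatch each survivor by exhibiting an explicit rearrangement ${\bf r}$ with the prescribed value $\bigcirc{\bf r}$, just as the lone survivor $(1\,2)$ was handled in Corollary~\ref{Rectangle}; the graph automorphisms (for instance $(1\,4)(2\,3)$ fixes ${\cal T}({\bf a})$) collapse these checks to one representative per orbit.

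The real obstacle is ${\bf c}$ and ${\bf d}$, where the two neighbours of the apex, $\{1,3\}$ respectively $\{4,5\}$, are non-adjacent in the base, so $\langle W\rangle$ carries no edge and Theorem~\ref{Complicated}.3 is simply unavailable. Theorem~\ref{Complicated}.1--2 together with inverses still cover every $f$ whose apex image or apex preimage lies in $W$ and every $f$ swapping the two members of $W$; what is left is the family of $f$ that \emph{fix} the apex without swapping its two neighbours, and these I must produce by hand. The natural device is to send the apex out to one of its neighbours and bring it back from the other, splicing the two apex transpositions into a rearrangement of the base edges whose product is chosen using the perm-completeness of the base, so that the apex traverses a short base path joining its two neighbours and returns fixed. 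The delicate point, and the heart of the argument, is that the most naive such routing tends to force the image of one neighbour to be the other; I expect to need a more careful interleaving, together with an appeal to the Klein-four symmetry of ${\cal T}({\bf c})$ and the analogous symmetry of ${\cal T}({\bf d})$, to show that these spliced products really do attain \emph{every} apex-fixing permutation of the correct parity.

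Minimality is the more routine half. For each graph I would show that deleting any edge produces a subgraph meeting one of the non-perm-completeness criteria: deleting an edge at a degree-$2$ rim vertex strands a vertex of degree $1$, so Theorem~\ref{Deg1} applies; deleting an edge of a triangle can leave two adjacent degree-$2$ vertices, so Theorem~\ref{Deg2} applies; and deleting a more central edge of ${\bf b}$ or ${\bf d}$ exposes a cut set $C$ together with a forest side $V_0$ satisfying $|C|\le|V_0|$, whence Corollary~\ref{WeakerNPC1} (or Theorem~\ref{NotPermComplete2}) applies once $V_0$ is chosen to be a tree containing the low-degree vertices. The automorphism groups again reduce the edge deletions to one representative per orbit, leaving only a handful of explicit cases per graph.
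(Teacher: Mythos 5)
Your treatment of ${\bf a}$ coincides with the paper's own (explicitly partial) proof: the paper likewise views ${\cal T}({\bf a})$ as a copy of ${\cal B}_2$ on $\{0,1,2,3\}$ with the apex $4$ attached along $W=\{0,3\}$, invokes Theorem~\ref{Complicated} together with ``symmetry considerations,'' dispatches the remaining permutation(s) by an explicit product (the paper exhibits $(1\,4)=(0\,2)\circ(3\,4)\circ(0\,1)\circ(2\,3)\circ(0\,4)\circ(1\,2)\circ(0\,3)$), and gets minimality from Theorems~\ref{Deg1} and~\ref{Deg2}. Note, though, that the paper proves \emph{only} the claim about ${\bf a}$ and leaves ${\bf b}$, ${\bf c}$, ${\bf d}$ to the reader, so your proposal undertakes strictly more than the printed proof does.

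It is precisely in that extra territory that your proposal has a genuine gap, and you flag it yourself. For ${\bf c}$ and ${\bf d}$ the apex's two neighbours ($W=\{1,3\}$, resp.\ $W=\{4,5\}$) are non-adjacent in the base, so clause 2.8.3 is unavailable, and the apex-fixing odd permutations are reached by none of 2.8.1, 2.8.2, their inverse-closures, or the graph automorphisms. Your ``splicing'' device --- send the apex out through one neighbour and back through the other --- is described but never executed: you give no construction of a rearrangement realizing an arbitrary prescribed apex-fixing $f$, no way around the failure mode you yourself identify (the naive routing forces one neighbour to map to the other), and no verification that every such $f$ of the right parity is attained. ``I expect to need a more careful interleaving'' is a plan, not a proof, and since this is exactly the case Theorem~\ref{Complicated} cannot touch, it is the mathematical heart of the claim for ${\bf c}$ and ${\bf d}$. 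A smaller but real defect: your survivor lists are understated even for ${\bf a}$. After applying Theorem~\ref{Complicated} at both apexes ($4$ with $W=\{0,3\}$ and, via the automorphism $(1\,4)(2\,3)$, the vertex $1$ with $W=\{0,2\}$) and closing under inverses, a permutation $f\in{\rm Blt}(5)$ escapes all clauses exactly when $0f=0$, $4f\in\{1,2\}$, $1f\in\{3,4\}$, $3f\not=4$ and $2f\not=1$; the escapees are therefore $(1\,4)$ \emph{and} the mutually inverse $4$-cycles $(1\,3\,2\,4)$ and $(1\,4\,2\,3)$, which form a second orbit under the symmetry and inversion. So at least two explicit products must be exhibited, not one. (The paper's own ``except maybe $(1\,4)$'' glosses over the same point, but a complete write-up must address it.)
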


\noindent{\bf Partial Proof.} \ We shall establish our claim about {\bf a}, and leave the other four sequences for our reader. 

It is easy to see by Corollaries \ref{Deg1} and \ref{Deg2} that the removal of an edge from the graph ${\cal T}({\bf a})$ produces a graph which is not perm-complete. So it remains only show that 
{\bf a} is perm-complete. 

${\cal B}_2$ is perm-complete. Referring to Theorem \ref{Complicated}, identify ${\cal G}$ to be the copy of ${\cal B}_2$ whose vertex set is $\{0,1,2,3\}$, whose $W$ is $\{0,3\}$, and whose $x$ 
is the vertex $4$. By Theorem \ref{Complicated} and symmetry considerations, it is easy to see that Prod$({\bf a})$ contains every element in Blt$(5)$ except maybe $(1\,4)$. But, since 
$(1\,4)=(0\,2)\circ(3\,4)\circ(0\,1)\circ(2\,3)\circ(0\,4)\circ(1\,2)\circ(0\,3)$, we have that $(1\,4)\in$ Prod$({\bf a})$.  So {\bf a} is minimally perm-complete.\vspace{.5em}

Lest it be surmised that every graph which is an amalgamation of triangles is perm-complete, we offer

\begin{prop}\label{Counterexample2} Let  ${\bf e}:=\langle(0\,1),(0\,3),(1\,2),(1\,3),(2\,3),(2\,4),(3\,4),(3\,7),(4\,5),(4\,7),(5\,6),(5\,7),(6\,7)\rangle$. The transpositional sequence {\bf e} is not 
perm-complete.\end{prop}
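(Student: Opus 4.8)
The plan is to exhibit $\mathcal{T}(\mathbf{e})$ as a graph possessing a sparse cut and then invoke Theorem \ref{NotPermComplete2}. First I would record the edge structure: reading off the thirteen terms of $\mathbf{e}$, one sees that $\mathcal{T}(\mathbf{e})$ is a strip of six triangles, namely $\{0,1,3\}$, $\{1,2,3\}$, $\{2,3,4\}$, $\{3,4,7\}$, $\{4,5,7\}$, and $\{5,6,7\}$, glued successively along their shared edges. Such a strip is long and thin, so it should admit a partition of its vertex set into two connected halves joined by relatively few crossing edges, and that is exactly what Theorem \ref{NotPermComplete2} is designed to exploit.

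Concretely, I would set $V_0 := \{0,1,2,3\}$ and $V_1 := \{4,5,6,7\}$. The induced subgraph $\mathcal{G}_0 := \langle V_0\rangle$ carries the edges $(0\,1),(0\,3),(1\,2),(1\,3),(2\,3)$ and is connected; the induced subgraph $\mathcal{G}_1 := \langle V_1\rangle$ carries the edges $(4\,5),(4\,7),(5\,6),(5\,7),(6\,7)$ and is likewise connected. The remaining three terms of $\mathbf{e}$, namely $(2\,4),(3\,4),(3\,7)$, each have one endpoint in $V_0$ and the other in $V_1$, so they constitute the crossing set $C$, and one checks that $\mathcal{G}=\mathcal{G}_0\cup C\cup\mathcal{G}_1$ accounts for all thirteen edges.

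With this decomposition in hand, the hypotheses of Theorem \ref{NotPermComplete2} are verified by a count: $V_0$ and $V_1$ are disjoint with union $8=n$, one has $\min\{|V_0|,|V_1|\}=4\ge 2$, the set $C$ is nonempty, and crucially $|C|=3<4=\min\{|V_0|,|V_1|\}$. The theorem then immediately yields that $\mathcal{G}=\mathcal{T}(\mathbf{e})$, and hence $\mathbf{e}$, is not perm-complete.

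The only genuine difficulty is the first step of spotting the correct cut. The graph is an amalgamation of triangles, which the preceding material (Corollary \ref{Rectangle} and Proposition \ref{Counterexamples1}) might lead one to expect to be perm-complete; the content of the proposition is precisely that a sufficiently long triangular strip has a waist narrow enough to defeat the inequality in Theorem \ref{NotPermComplete2}. Once the partition $\{0,1,2,3\}$ versus $\{4,5,6,7\}$ is chosen, every remaining step is a routine verification of the two connectivities and of the single inequality $|C|<\min\{|V_0|,|V_1|\}$, so I expect no serious obstacle beyond the initial identification of the separating set.
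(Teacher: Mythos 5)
Your proof is correct and is essentially the paper's own argument: you use the identical decomposition $V_0=\{0,1,2,3\}$, $V_1=\{4,5,6,7\}$ (whose induced subgraphs are precisely the two copies of ${\cal B}_2$ the paper refers to), the same three-edge cut set $\{(2\,4),(3\,4),(3\,7)\}$, and the same appeal to Theorem \ref{NotPermComplete2} via the inequality $|C|=3<4=\min\{|V_0|,|V_1|\}$. Your verification is merely more explicit than the paper's two-line proof.
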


\begin{proof} ${\cal T}({\bf e})$ consists of two copies of ${\cal B}_2$ conjoined by a three-element cut set. So Theorem \ref{NotPermComplete2} implies that ${\cal T}({\bf e})$ is not 
perm-complete.  \end{proof}    

By an $n$-{\em wheel} we mean any graph isomorphic to ${\cal W}_n:=\langle n+1;E\rangle$, where $E$ contains the following $2n$ edges: $(0\,i)$ for every $i\in\{1,2,\ldots, n\}$ and $(i\,i+1)$ for 
every $i\in\{1,2,\ldots, n-1\}$ and finally also $(1\,n)$. 

By Corollary \ref{WeakerNPC1} and Theorem \ref{NotPermComplete2}, if ${\cal W}_n$ is perm-complete then ${\cal W}_n$  is minimally perm-complete. \vspace{.5em}

\noindent{\bf Conjecture.} \ ${\cal W}_n$ is perm-complete for every $n\ge3$.

\section{Conjugacy invariance}

The present section will lay the ground work for, and thereafter establish, the following characterization of the conjugacy invariant transpositional sequences having multigraphs on the vertex set $n$ that are connected. 

\begin{thm}\label{CCIT} Let  {\bf u} be a transpositional sequence in {\rm Sym}$(n)$ with $2\le n\in{\mathbb N}$ whose multigraph  ${\cal T}({\bf u})$ is connected on the vertex set $n$. 
If $n=2$ then {\bf u} is both perm-complete and {\rm CI}. If $n=3$  then {\bf u} is {\rm CI} if and only if either $|{\bf u}|$ is odd or ${\cal T}({\bf u})$ is a multitree with at least one simple multiedge. 

For $n\ge4$, the sequence {\bf u} is {\rm CI} if and only if ${\cal T}({\bf u})$ is a multitree in which no vertex is an endpoint of more than one non-simple multiedge, and in which each even-multiplicity 
multiedge is a multitwig whose non-leaf vertex has only one non-leaf neighbor.\end{thm}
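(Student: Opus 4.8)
The plan is to translate everything into cycle types: since permutations in $\mathrm{Sym}(n)$ are conjugate exactly when they have the same cycle type, $\mathbf{u}$ is CI iff every $\bigcirc\mathbf{r}$ with $\mathbf{r}\in\mathrm{Seq}(\mathbf{u})$ has one common cycle type. I would read off that type through Lemma~\ref{PartitionArcs}: for each rearrangement the image $x\bigcirc\mathbf{r}$ is the terminal vertex of the $\mathbf{r}$-path $\overrightarrow{\mathbf{r}_x}$, so the cycles of $\bigcirc\mathbf{r}$ are precisely the circuits into which these arc-paths close up. The case $n=2$ is immediate, since every rearrangement of $k$ copies of $(0\,1)$ gives the one permutation $(0\,1)^k$, which is both perm-complete and (being a singleton product set) CI. For $n=3$ I would split on parity: if $|\mathbf{u}|$ is odd then every product lies in $\mathrm{Blt}(3)$, all of whose elements are transpositions and hence mutually conjugate, so $\mathbf{u}$ is CI; if $|\mathbf{u}|$ is even the products lie in $\mathrm{Alt}(3)=\{\iota,(0\,1\,2),(0\,2\,1)\}$, so CI fails exactly when both $\iota$ and a $3$-cycle occur, and a short direct analysis of the two connected shapes on three vertices shows this is avoided precisely when $\mathcal{T}(\mathbf{u})$ is the path carrying at least one simple multiedge.

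For $n\ge 4$ I would first show that CI forces $\mathcal{T}(\mathbf{u})$ to be a multitree. If the underlying simple graph had a cycle, fix a spanning tree and let $e=(a\,b)$ be an edge lying outside it; by \cite{Denes, Silberger} the tree edges in a suitable order already produce an $n$-cycle, and the extra copies of $e$ give me a \emph{toggle}. Routing $e$ so that the traversal through $a$ and $b$ merges into one long cycle yields one cycle type, whereas routing it so that $a$ and $b$ close off a short circuit (the $(0\,1)(2\,3)$ versus $(0\,2\,3)$ phenomenon visible already on a triangle with a pendant) yields a strictly different type. Turning ``the graph has a cycle'' into such an explicit antipodal pair of rearrangements of the \emph{same} edge multiset is where I expect the first genuine friction.

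The heart of the matter is the multitree case, which I would attack by induction on the number of edges, peeling a leaf $y$ whose unique incident edge $(x\,y)$ has multiplicity $m$; the $m$ copies of $(x\,y)$ are the only terms that move $y$, so tracing the $\mathbf{r}$-paths reduces the effect to whether an even number of these copies mutually annihilate. When $m$ is odd the net behaviour is that of a single simple pendant edge, so I would delete the leaf and invoke the inductive hypothesis, the only danger being two odd-multiplicity edges sharing a vertex, where a local rearrangement spins off a spurious $3$-cycle and changes the type; this is exactly what the first structural hypothesis (no vertex lies on two non-simple multiedges) forbids, and the same local construction shows that hypothesis is necessary. When $m$ is even the copies can either cancel (leaving $y$ fixed and $x$ a fresh leaf) or survive (interchanging the roles of $x$ and $y$), and I would prove these give the \emph{same} cycle type precisely when $(x\,y)$ is a twig whose non-leaf end $x$ has a single non-leaf neighbour, for then the two outcomes merely swap which of $x,y$ is the lone fixed point while the remaining tree contributes an unchanged $(n-1)$-cycle; this is the second structural hypothesis. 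If $x$ had two non-leaf neighbours, or if the even edge were internal, cancellation would split one cycle into two (the $2^2$ versus $3+1$ failure), so these configurations must be, and are, excluded.

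I expect the main obstacle to be the even-multiplicity bookkeeping: I must show that \emph{every} intermediate degree of interspersing of the $m$ copies, not just the extreme cases of total cancellation and total survival, produces the single admissible type, and conversely that whenever a structural hypothesis fails I can exhibit a genuinely realizable rearrangement of the fixed edge multiset achieving a second type. To keep the induction uniform I would strengthen the hypothesis to track not merely the cycle type but the location of $x$ inside its cycle, so that both the leaf-insertion and the toggle apply without reference to the ambient tree; assembling these local toggles into a single global cycle-type invariant, while certifying that each forbidden configuration is always exploitable, is the crux of the proof.
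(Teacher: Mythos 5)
Your skeleton---$n=2$ and $n=3$ directly, sufficiency for well-behaved multitrees by induction, necessity by excluding pathological multitrees and then circuits---is the same decomposition the paper uses, but the two steps you yourself flag as ``friction'' and ``the crux'' are not finishing touches: they constitute essentially all of \S3, and both of your sketches fail as stated. The serious gap is circuit exclusion. The permutations you must compare are not $\bigcirc{\bf p}$ for rearrangements ${\bf p}$ of the circuit (or tree-plus-$e$) edges alone, but $\bigcirc{\bf p}\circ w$, where $w$ is the permutation contributed by all the remaining edges of the connected multigraph, and the cycles of $w$ thread through the circuit's vertices in ways you do not control. Two routings whose bare products have different cycle types can become conjugate after composing with $w$. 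In Case 1 of the paper's Theorem \ref{Four}, the three rearrangements produce cycle-length multisets $\{1+|{\rm s}_0|,\,3+|{\rm s}_1|+|{\rm s}_2|+|{\rm s}_3|\}$, $\{1+|{\rm s}_2|,\,3+|{\rm s}_0|+|{\rm s}_1|+|{\rm s}_3|\}$, and $\{2+|{\rm s}_0|+|{\rm s}_1|,\,2+|{\rm s}_2|+|{\rm s}_3|\}$, and \emph{any two} of these can be made equal by suitable choices of the $|{\rm s}_i|$ (the first two coincide whenever $|{\rm s}_0|=|{\rm s}_2|$); only by playing all three against one another does the paper force the contradiction $|{\rm s}_3|=-1$. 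So a fixed ``antipodal pair'' of routings need not exist, and which pair works depends on the structure of $w$. To handle this, the paper must first characterize \emph{all} of ${\rm Prod}(\sigma(n))$ (Lemmas \ref{Numbers1}--\ref{Numbers3}: these products are exactly the permutations $({\rm h})(\nu(n)\setminus{\rm h})^-$ for nonempty subsequences ${\rm h}$ of $\nu(n)$), and then run the case analyses of Theorems \ref{Four} and \ref{NixCircuits}---five cases in the triangle situation, three cases plus three Claims for longer circuits---organized by how the cycles of $w$ meet the circuit's vertex set. None of that machinery is recoverable from your toggle.

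The second gap is one you also name without closing: in the multitree induction, the $m$ copies of an even-multiplicity twig $(x\,y)$ do not merely ``cancel or survive,'' because the one other edge $(x\,z)$ at the non-leaf vertex can be interleaved among them---e.g.\ $(x\,y)\circ(x\,z)\circ(x\,y)=(y\,z)$---so the net local effect can be a conjugation rather than a cancellation or a survival. The paper closes exactly this hole in Theorem \ref{SufficiencyCI} by observing that $(x\,z)$ is the \emph{only} term of ${\bf u}$ failing to commute with $(x\,y)$, whence removal of all copies of $(x\,y)$ gives a bijection ${\rm Prod}({\bf v})\leftrightarrow{\rm Prod}({\bf u})$ preserving the relevant types, with Theorem \ref{OddTree}'s pair-insertion/migration argument handling odd multiplicities; your proposal to ``track the location of $x$ inside its cycle'' points in this direction but is precisely the work that remains. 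Similarly, the necessity direction for pathological multitrees requires the explicit constructions running from Lemma \ref{Untwig} through Corollary \ref{Fork} (even non-twig edges, two non-simple edges at a vertex in the $2{+}2$, $2{+}3$ and $3{+}3$ patterns, and the fork), not just the two local failures you mention. As it stands, your proposal is a correct roadmap whose every hard step is deferred.
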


\subsection{Constant-product sequences}

We say that a permutational sequence {\bf s} is {\em constant-product} iff $|$Prod$({\bf s})|=1$. The class of constant-product {\bf s} is antipodal to the class of perm-complete {\bf s}. 

It is clear that ${\bf s} := \langle s_0,s_1,\ldots,s_k\rangle$ is constant-product if $s_i\circ s_j=s_j\circ s_i$ whenever $0\le i<j\le k$. Moreover, $s_i\circ s_j=s_j\circ s_i$ if either 
supp$(s_i)\,\cap\,$supp$(s_j)=\emptyset$ or $s_i$ and $s_j$ are powers $s_i=f^p$ and $s_j=f^q$ of a common permutation $f$; that is to say, {\bf s} is constant-product if {\bf s} is boring. 

Do there exist non-boring constant-product permutational sequences? 

We paraphrase a theorem of Eden and Sch\"{u}tzenberger (Page 144 of \cite{Eden}), which remarks upon certain injective transpositional sequences {\bf u}, and which touches on this question.\vspace{.5em} 

For each $v\in n$, let ${\bf u}_{(v)}$ be the subsequence of {\bf u} of which $u_{(v),j}$ is a term if and only if $v\in$ supp$(u_{(v),j})$. \vspace{.5em}  

{\bf Eden-Sch\"{u}tzenberger Theorem.} \ {\sl When the transpositional multigraph ${\cal T}({\bf u})$ is a simple tree and also ${\bf s}\in$ {\rm Seq}$({\bf u})$, then $\bigcirc{\bf s}=\bigcirc{\bf u}$ if and 
only if ${\bf s}_{(v)}={\bf u}_{(v)}$ for every $v\in n$.  }\vspace{.5em}

The paucity of non-boring constant-product permutational sequences, raises our interest to its superclass ${\cal O}(n)$ of permutational sequences {\bf s} for which the order of the permutation  
$\bigcirc{\bf x}$ is constant over all ${\bf x}\in$ Seq$({\bf s})$. The class of conjugacy invariant sequences is a natural proper subclass of ${\cal O}(n)$.

\subsection{Preliminaries}   

We call a binary relation $a\subseteq X\times X$ {\em conjugate} to $b\subseteq X\times X$, and write $a\simeq b$, iff $b=\{\langle xf,yf\rangle: \langle x,y\rangle\in a\}$ for some permutation 
$f\in$ Sym$(X)$. Equivalently, $a\simeq b$ iff $g^-\circ a\circ g = b$ for some $g\in$ Sym$(X)$. Plainly $\simeq$ is an equivalence relation on the family ${\cal P}(X\times X) := \{r: r\subseteq X\times X\}$ of 
all binary relations on the set $X$. 

We define the {\em world} of $c\subseteq X\times X$ to be $\$(c):=$ Dom$(c)\,\cup\,$Rng$(c)$. It is commonplace that $a\simeq b$ if and only if $b=g^-\circ a\circ g$ for some $g\in$ Sym$(\$(a)\cup\$(b))$. 
Of course $b = g^-\circ a\circ g$ if and only if $g\circ b=a\circ g$.\vspace{.5em}  

In this paper we restrict our attention to those binary relations which are permutations on the set $n$. Whenever $\{a,b\}\subseteq$ Sym$(n)$, we have not only that $a\circ b\simeq b\circ a$ but also 
that $a\simeq a^-$. 

For $n>0$ an integer, $[n]$ denotes the set  $\{1,2,\ldots,n\}$. (But remember that $n$ denotes $\{0,1,\ldots,n-1\}$.)

Type$(a)\subseteq{\cal P}(X\times X)$ denotes the conjugacy class of the binary relation $a\subseteq X\times X$. When $a\in$ Sym$(n)$ then Type$(a)$ acquires a more informative moniker; 
namely, Type$(a) := 1^{e(1)}2^{e(2)}\cdots n^{e(n)}$, where for each $j\in[n]$ the integer $e(j)\ge0$ denotes the number of $j$-cycles in the permutation $a$. Obviously $n=\sum_{j=1}^nje(j)$.   

We sometimes save space by omitting to write both $1^{e(1)}$ and also those $j^{e(j)}$ for which $e(j)=0$.\vspace{.5em}

\noindent{\bf Example.} If $a:=(0\, 1)(2\, 3)(4\, 5)(6\,7\,8\,9)\in$ Sym$(12)$ then Type$(a)=1^22^34^1$, which is to say $a\in 1^22^34^1$. But if we had prior knowledge that $a \in$ Sum$(12)$ then we 
might have written more tersely instead that $a\in 2^34^1$.\vspace{.5em}

\noindent{\bf Definition.} A sequence ${\bf s}:=\langle s_0,\ldots, s_{k-1}\rangle$ in Sym$(n)$ is {\em conjugacy invariant} (CI)  iff Prod$({\bf s})\subseteq$ Type$(\bigcirc{\bf s})$. 

\begin{prop}\label{CyclicConj} Let ${\bf b}:=\langle b_0,b_1,\ldots, b_{k-1}\rangle$ be a sequence in {\rm Sym}$(n)$. For ${\bf b}^{(i)}$ and $i\in k$, the expression  
${\bf b}^{(i)} :=\langle b_i,b_{i+1},\ldots, b_{k-2},b_{k-1},b_0,b_1,\ldots, b_{i-1}\rangle$, known as a ``cyclic conjugate'' of ${\bf b}$, satisfies $\bigcirc{\bf b}^{(i)}\simeq\bigcirc{\bf b}$. \end{prop}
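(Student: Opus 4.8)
The plan is to split the full product at the index $i$ and recognize the cyclic conjugate's product as the two halves composed in the opposite order, then invoke the already-stated fact that $a\circ b\simeq b\circ a$ for all $\{a,b\}\subseteq{\rm Sym}(n)$. Concretely, first I would set ${\bf P}:=\langle b_0,b_1,\ldots,b_{i-1}\rangle$ and ${\bf S}:=\langle b_i,b_{i+1},\ldots,b_{k-1}\rangle$, and write $P:=\bigcirc{\bf P}$ and $S:=\bigcirc{\bf S}$ (taking $P$ to be the identity $\iota\restrict n$ when $i=0$). Then by the definition of the compositional product, $\bigcirc{\bf b}=P\circ S$, while the cyclic conjugate ${\bf b}^{(i)}=\langle b_i,\ldots,b_{k-1},b_0,\ldots,b_{i-1}\rangle$ satisfies $\bigcirc{\bf b}^{(i)}=S\circ P$ because the terms $b_i,\ldots,b_{k-1}$ come first and the terms $b_0,\ldots,b_{i-1}$ follow, exactly mirroring the two factors.

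Having reduced the claim to $S\circ P\simeq P\circ S$, I would finish by direct conjugation by $P$: under the left-to-right convention and the definition $a\simeq b\iff g^-\circ a\circ g=b$, we compute
\[
P^-\circ(P\circ S)\circ P = S\circ P,
\]
so that taking $g:=P$ witnesses $\bigcirc{\bf b}=P\circ S\simeq S\circ P=\bigcirc{\bf b}^{(i)}$. Since $\simeq$ is an equivalence relation (in particular symmetric), this gives $\bigcirc{\bf b}^{(i)}\simeq\bigcirc{\bf b}$, which is the assertion. This step is precisely the instance $a:=P$, $b:=S$ of the fact $a\circ b\simeq b\circ a$ cited just before the proposition, so it may alternatively be quoted rather than recomputed.

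There is no substantive obstacle here; the proposition is essentially a bookkeeping consequence of $a\circ b\simeq b\circ a$. The only points requiring a word of care are the degenerate cases: when $i=0$ the sequence ${\bf b}^{(0)}$ equals ${\bf b}$ and $P=\iota\restrict n$, so the identity $P^-\circ(P\circ S)\circ P=S\circ P$ still holds trivially; and one should confirm that the factorization $\bigcirc{\bf b}=P\circ S$ respects the chosen left-to-right composition convention, which it does since the terms of ${\bf P}$ precede those of ${\bf S}$ in ${\bf b}$. I would state the factorization carefully once and let the single-line conjugation computation close the argument.
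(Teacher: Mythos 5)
Your proof is correct and follows essentially the same route as the paper's: split ${\bf b}$ at index $i$ into a prefix and a suffix, observe that $\bigcirc{\bf b}^{(i)}$ is the two partial products composed in the opposite order, and conclude via $a\circ b\simeq b\circ a$ (which the paper simply cites, where you additionally exhibit the explicit conjugating element $P$). No gap; the extra care about the $i=0$ case and the composition convention is fine but not needed beyond what the paper already does.
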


\begin{proof} Let ${\bf p}:=\langle b_0,b_1,\ldots, b_{i-1}\rangle$ and ${\bf s}:=\langle b_i,b_{i+1},\ldots, b_{k-1}\rangle$. Then $\bigcirc{\bf b}^{(i)} = \bigcirc{\bf sp} = 
\bigcirc{\bf s}\circ\bigcirc{\bf p} \simeq \bigcirc{\bf p}\circ\bigcirc{\bf s} = \bigcirc{\bf ps} = \bigcirc{\bf b}$.  \end{proof}

\noindent{\bf Definition.} For ${\bf s} := \langle s_0,s_1,\ldots, s_{k-1}\rangle$ a sequence, ${\bf s}^{\rm R} :=\langle s_{k-1},\ldots,s_1,s_0\rangle$ is called the {\em reverse} of {\bf s}.\vspace{.5em}

\begin{prop}\label{Inverse} Let ${\bf t}:=\langle t_0,t_1,\ldots,t_{k-1}\rangle$ be a transpositional sequence in {\rm Sym}$(n)$. Then $\bigcirc({\bf t}^{\rm R})\simeq\bigcirc{\bf t}$. \end{prop}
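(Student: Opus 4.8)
The plan is to show that reversing a transpositional sequence simply computes the inverse of its compositional product, and then to invoke the already-noted fact that every permutation is conjugate to its own inverse. The whole argument rests on one special feature of transpositions: each term $t_i$ is an involution, so $t_i^- = t_i$.

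First I would record the behavior of $\bigcirc$ under reversal. Writing $\bigcirc{\bf t} = t_0\circ t_1\circ\cdots\circ t_{k-1}$, I would compute the inverse of this composite. Under the paper's left-to-right convention, the inverse of a composite reverses the order of the factors and inverts each: $(\bigcirc{\bf t})^- = t_{k-1}^-\circ t_{k-2}^-\circ\cdots\circ t_1^-\circ t_0^-$. (This is the one step where I would be careful, since the left-to-right convention $x(f\circ g)=(xf)g$ must be used consistently; the cancellation $t_i\circ t_i^- = \iota$ telescopes from the inside out to yield the identity, confirming the formula.) Because each $t_i$ is a transposition, $t_i^- = t_i$, so the inverted factors coincide with the original ones and $(\bigcirc{\bf t})^- = t_{k-1}\circ t_{k-2}\circ\cdots\circ t_1\circ t_0 = \bigcirc({\bf t}^{\rm R})$. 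Thus $\bigcirc({\bf t}^{\rm R}) = (\bigcirc{\bf t})^-$ exactly.

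Finally I would apply the remark recorded just before the preliminaries, that for any $a\in{\rm Sym}(n)$ one has $a\simeq a^-$. Taking $a := \bigcirc{\bf t}$ gives $\bigcirc{\bf t}\simeq(\bigcirc{\bf t})^- = \bigcirc({\bf t}^{\rm R})$, and the symmetry of the conjugacy relation $\simeq$ (established earlier as an equivalence relation) yields $\bigcirc({\bf t}^{\rm R})\simeq\bigcirc{\bf t}$, as desired. I do not anticipate a genuine obstacle here; the only point requiring attention is the order-reversing/inverting computation of $(\bigcirc{\bf t})^-$ in the left-to-right regime, after which the involution property of transpositions and the quoted fact $a\simeq a^-$ finish the proof immediately.
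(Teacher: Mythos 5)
Your proposal is correct and is essentially the paper's own proof: both arguments rest on the identity $(\bigcirc{\bf t})^- = t_{k-1}^-\circ\cdots\circ t_0^-$, the involution property $t_i^- = t_i$ of transpositions, and the previously recorded fact that $a\simeq a^-$ for any $a\in{\rm Sym}(n)$. The only difference is cosmetic — you compute $(\bigcirc{\bf t})^-$ and identify it with $\bigcirc({\bf t}^{\rm R})$, while the paper runs the same chain of equalities starting from $\bigcirc({\bf t}^{\rm R})$.
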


\begin{proof} $\bigcirc({\bf t}^{\rm R}) = t_{k-1}\circ t_{k-2}\circ\cdots\circ t_1\circ t_0 = t_{k-1}^-\circ t_{k-2}^-\circ\cdots\circ t_1^-\circ t_0^- = (t_0\circ t_1\circ\cdots\circ t_{k-1})^- = 
(\bigcirc{\bf t})^- \simeq\bigcirc{\bf t}$ since $t_i^- = t_i$ when $t_i$ is a transposition. \end{proof}

Dan Franklin: By Proposition \ref{Inverse}, if {\bf s} a transpositional sequence and $f\in$ Prod$({\bf s})$, then $f^-\in$ Prod$({\bf s})$. \vspace{.5em}

\noindent{\bf Terminology.} When ${\bf w}:=\langle x\rangle$ is a length-one sequence, then $x$ may serve as a nickname for {\bf w}. If a sequence {\bf w} is of length $|{\bf w}|=k$ in $X$, and if {\bf w} 
occurs exactly $m$ times as a term in ${\bf r}=\langle {\bf w},{\bf w},\ldots, {\bf w}\rangle$, then we write ${\bf r := w}^{\beta(m)}$. That is, ${\bf w}^{\beta(m)}$ is the ``block'' consisting of exactly 
$m$ adjacent occurrences of {\bf w}. Thus {\bf r} has length $m$ when seen as a sequence in the set $\{{\bf w}\}$, but $|{\bf r}|=mk$ when {\bf r} is viewed as a sequence in $X$. 

Whereas ${\bf w}^{\beta(m)}$ denotes a sequence comprised of $m$ adjacent occurrences of the subsequence {\bf w}, the expression  
$(\bigcirc{\bf w})^m$ denotes the compositional product $\bigcirc{\bf w}\circ\bigcirc{\bf w}\circ\cdots\circ\bigcirc{\bf w}$ of $m$ adjacent occurrences of the permutation $\bigcirc{\bf w}$. 
That is to say, if ${\bf r}:={\bf w}^{\beta(m)}$ is a sequence in Sym$(n)$ then $\bigcirc{\bf r} = \bigcirc({\bf w}^{\beta(m)}) = (\bigcirc{\bf w})^m$. \vspace{.5em} 

Each sequence in Sym$(2)$ is both perm-complete and CI. If $|{\bf s}|<3$ for {\bf s} a sequence in Sym$(n)$ then {\bf s} is CI. However, for ${\bf s}$ in Sym$(n)$ with $n\ge3$ and with $|{\bf s}|\ge3$, the plot 
thickens.\vspace{.5em} 

When $a\not=b$ are vertices in a multigraph ${\cal G}$, the {\em multiplicity} in ${\cal G}$ of its multiedge $(a\,b)$ is the number $\mu_{\cal G}(a\,b)\ge0$ of simple edges in the bundle comprising that multiedge. 
Thus, when $\mu_{\cal G}(a\,b)=0$, there is no simple edge in ${\cal G}$ connecting $a$ with $b$. But, when $\mu_{\cal G}(a\,b)=1$, then the multiedge $(a\,b)$ is itself simple in ${\cal G}$. For 
${\bf u}\in1^{n-2}2^1$, the multiplicity $\mu_{\bf u}(a\,b)$ in {\bf u} of the transposition $(a\,b)$ as a term in {\bf u} equals $\mu_{{\cal T}({\bf u})}(a\,b)$.
\vspace{.5em}

\underline{Reminder}: $f\in1^{n-2}2^1$ says merely that $f$ is a transposition in Sym$(n)$. A multigraph ${\cal G}$ we call CI iff ${\cal G}$ is isomorphic to ${\cal T}({\bf t})$ for a CI sequence {\bf t} 
in $1^{n-2}2^1$. Without ado we will apply obviously corresponding terminology interchangeably to transpositional sequences and to isomorphs of transpositional multigraphs. 

\subsection{Conjugacy invariant transpositional sequences}

We proceed to identify the CI transpositional sequences {\bf u} in Sym$(n)$. It suffices to treat those such {\bf u} for which ${\cal T}({\bf u})$ is a connected multigraph on the vertex set $n$; this narrow focus is 
embodied in Theorem \ref{CCIT}.

\begin{thm}\label{OddTree} Let ${\cal T}({\bf u})$ be a multitree with no even-multiplicity multiedges, and none of whose vertices lie on more than one non-simple multiedge. Then 
{\rm Prod}$({\bf u})\subseteq n^1$.  \end{thm}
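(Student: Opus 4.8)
\medskip

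\noindent\textbf{Proof proposal.} The plan is to induct on $n$, peeling a leaf of the multitree. The base case $n=2$ is immediate: the multitree is a single multiedge $(0\,1)$ of odd multiplicity, so every rearrangement composes to $(0\,1)^{\rm odd}=(0\,1)\in 2^1$. For the inductive step, fix a leaf $\ell$ of ${\cal T}({\bf u})$, let $p$ be its unique neighbor, and let $\mu:=\mu_{\bf u}(\ell\,p)$, which is odd by hypothesis. Deleting $\ell$ (equivalently, deleting every term $(\ell\,p)$ from ${\bf u}$) yields a transpositional sequence ${\bf u}'$ whose multigraph ${\cal T}({\bf u}')$ is a multitree on $V':=n\setminus\{\ell\}$. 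This reduced multitree inherits all the hypotheses: deleting an entire multiedge keeps every remaining multiplicity odd, and removing edges can only decrease the number of non-simple multiedges at each vertex, so still no vertex lies on more than one. Hence by the inductive hypothesis ${\rm Prod}({\bf u}')\subseteq (n-1)^1$; that is, $\bigcirc{\bf r}'$ is an $(n-1)$-cycle on $V'$ for every ${\bf r}'\in{\rm Seq}({\bf u}')$.

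Now fix an arbitrary ${\bf r}\in{\rm Seq}({\bf u})$; I must show $\bigcirc{\bf r}\in n^1$. Write ${\bf r}=\langle {\bf g}_0,(\ell\,p),{\bf g}_1,(\ell\,p),\ldots,(\ell\,p),{\bf g}_\mu\rangle$, where the $\mu$ displayed terms are the occurrences of $(\ell\,p)$ and each block ${\bf g}_i$ is free of $(\ell\,p)$; since $\ell$ is a leaf, no ${\bf g}_i$ involves $\ell$ at all, so each $g_i:=\bigcirc{\bf g}_i$ fixes $\ell$. Deleting the $(\ell\,p)$ terms leaves a rearrangement of ${\bf u}'$, so $\sigma:=g_0\circ g_1\circ\cdots\circ g_\mu$ is an $(n-1)$-cycle on $V'$ fixing $\ell$. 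Using $(\ell\,p)\circ h = h\circ\bigl(h^-\circ(\ell\,p)\circ h\bigr)$ repeatedly to push every $(\ell\,p)$ to the right past the blocks to its right, and noting that $h^-\circ(\ell\,p)\circ h=(\ell\,\; p\,h)$ when $h$ fixes $\ell$, one obtains
\[ \bigcirc{\bf r}=\sigma\circ(\ell\,q_1)\circ(\ell\,q_2)\circ\cdots\circ(\ell\,q_\mu),\qquad q_j:=p\,(g_j\circ g_{j+1}\circ\cdots\circ g_\mu)\in V'. \]
When $\mu=1$ (in particular whenever the leaf edge is simple), $\bigcirc{\bf r}=\sigma\circ(\ell\,q_1)$, and since $\ell$ is a fixed point of $\sigma$ while $q_1$ lies in the single $(n-1)$-cycle of $\sigma$, the standard fact that right multiplication by a transposition merges two cycles whose points lie in different cycles shows that $\bigcirc{\bf r}$ is a single $n$-cycle.

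The heart of the matter, and the step I expect to be the main obstacle, is the case $\mu\ge3$. Here the product $\sigma\circ(\ell\,q_1)\circ\cdots\circ(\ell\,q_\mu)$ is built from $\sigma$ by $\mu$ successive right multiplications by transpositions through the common point $\ell$, each of which merges or splits cycles; a parity count shows the final number of cycles is odd, but this alone does not force it to equal $1$. Indeed, for unconstrained $q_j$ the product need not be an $n$-cycle, so the argument must genuinely use the hypothesis that $p$ lies on no non-simple multiedge other than $(\ell\,p)$ itself: this means every edge of ${\cal T}({\bf u}')$ incident to $p$ is simple, which in turn controls how the values $q_j=p\,(g_j\circ\cdots\circ g_\mu)$ are distributed along the cyclic order of $\sigma$. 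The plan is to show, from this constraint, that each ``split'' performed by an $(\ell\,q_j)$ is undone by the next factor, so that the merge/split pattern returns to a single cycle after all $\mu$ (odd) steps; equivalently, one tracks the orbit of $\ell$ under $\bigcirc{\bf r}$ and argues that it sweeps through every vertex because $p$'s incidences are simple and $\sigma$ is transitive on $V'$. Establishing this controlled interleaving is the crux; the remaining bookkeeping is routine.
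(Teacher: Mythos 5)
Your reduction is sound as far as it goes: peeling the leaf $\ell$, applying the inductive hypothesis to ${\bf u}'$, and rewriting $\bigcirc{\bf r}=\sigma\circ(\ell\,q_1)\circ\cdots\circ(\ell\,q_\mu)$ with $q_j := p(g_j\circ\cdots\circ g_\mu)$ is a correct computation, and your $\mu=1$ case is complete. But the case $\mu\ge3$ is a genuine gap, not residual bookkeeping: you state a plan (``each split performed by an $(\ell\,q_j)$ is undone by the next factor'') without proving it, and as stated the plan is not even accurate, because the $q_j$ may coincide and the merge/split pattern depends delicately on where the $q_j$ sit in the cycles of the partial products. For instance, in the star with center $c$, leaves $\ell,a,b$, and $\mu_{\bf u}(\ell\,c)=3$, the arrangement ${\bf r}=\langle(c\,a),(\ell\,c),(\ell\,c),(\ell\,c),(c\,b)\rangle$ gives $q_1=q_2=q_3=b$, and the product is an $n$-cycle only because two of the three equal transpositions cancel outright, not because splits alternate with merges. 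Nor can you evade the issue by choosing a leaf whose pendant edge is simple: the path $0-1-2-3$ with multiplicities $3,1,3$ satisfies all hypotheses of the theorem, yet both of its leaf edges are non-simple, so any leaf-peeling induction must confront $\mu\ge3$. Since everything beyond the classical simple-tree case (D\'enes \cite{Denes}, Silberger \cite{Silberger}) lies precisely in this multiplicity issue, what you have is a correct setup plus an unproven claim exactly where the heart of the proof should be.

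For comparison, the paper sidesteps this interleaving analysis entirely. It inducts on $|{\bf u}|$ rather than on $n$, with the simple tree as base case, and in the inductive step it takes two occurrences $(x\,y)_1,(x\,y)_2$ of a repeated transposition and migrates $(x\,y)_1$ rightward through the word via the identity $(x\,y)\circ(y\,z)=(x\,z)\circ(x\,y)$: each step replaces the term $(y\,z)$ by $(x\,z)$, which---because ${\cal T}({\bf u})$ is a multitree and $y$ lies on no non-simple multiedge other than $(x\,y)$---again yields a multitree satisfying the hypotheses, while the product is unchanged. Eventually $(x\,y)_1$ abuts an occurrence of $(x\,y)$, the pair cancels, the length drops by two, and the inductive hypothesis applies. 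You could likely rescue your argument by importing exactly this cancellation mechanism inside your block decomposition of ${\bf r}$, but some such mechanism must actually be supplied; it is the missing idea.
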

 
\begin{proof} We induce on $|{\bf u}|\ge n-1$. 

{\sf Basis Step}: The theorem is easily seen to hold when ${\cal T}({\bf u})$ is simple. Proofs occur in \cite{Denes} and in \cite{Silberger}.

{\sf Inductive Step}: Pick $k>n$. Suppose the theorem holds for all {\bf u} for which $|{\bf u}|\in\{n-1,n,\ldots,k-1\}$. Let $|{\bf u}|\in\{k-1,k\}$, and let {\bf u} satisfy the hypotheses of the theorem. 
 
Let $(x\,y)$  be a multiedge of ${\cal T}({\bf u})$ such that neither $x$ nor $y$ is an endpoint of any non-simple multiedge $(x'\,y')\not=(x\,y)$. Let {\bf v} be a sequence created by inserting into {\bf u} two 
additional occurrences, $(x\,y)_1$ and $(x\,y)_2$,  of the transposition $(x\,y)$. Thus ${\bf v} = \langle{\bf a},(x\,y)_1,{\bf b},(x\,y)_2,{\bf c}\rangle$ for some subsequences {\bf a}, {\bf b}, and {\bf c} of {\bf u} 
for which 
${\bf u = abc}$. If $|{\bf b}|=0$ then obviously $\bigcirc{\bf v}=\bigcirc{\bf u}\in n^1$.  So suppose that $|{\bf b}|>0$.

Let the first term of {\bf b} be $(t\,z)$. If $\{x,y\}=\{t,z\}$ then $(x\,y)_1\circ(t\,z)=\iota$, and so again $\bigcirc{\bf v}=\bigcirc{\bf u}$. But, if $\{x,y\}\cap\{t,z\}=\emptyset$, then 
$(x\,y)_1\circ(t\,z)=(t\,z)\circ(x\,y)_1$, and $(x\,y)_1$ will have migrated one space to the right in {\bf v} towards $(x\,y)_2$.  So take it that $y=t$ and that  $|\{x,y\}\cap\{y,z\}|=1$.  

Now, $(x\,y)_1\circ(y\,z)=(x\,z)\circ(x\,y)_1$. The tree ${\cal T}({\bf v})$ does not have the triangle $\{(x\,y)_1,(y\,z),(x\,z)\}$ as a subgraph. So the transposition $(x\,z)$ does not occur as a term in {\bf u}. 
Indeed, if {\bf v} satisfies the hypotheses of the lemma, then the multiplicity in {\bf v} of $(y\,z)$ is $1$, since the multiplicity of $(x\,y)$ in {\bf v} is greater than one. Thus the tree ${\cal T}({\bf w})$ is just the 
modification of ${\cal T}({\bf v})$ obtained by the replacement of the simple multiedge $(y\,z)$ of ${\cal T}({\bf v})$ by the simple multiedge $(x\,z)$. That is, {\bf w} has a single occurrence of the transposition 
$(x\,z)$ as a term but has no $(y\,z)$ terms, whereas {\bf v} has a single occurrence of $(y\,z)$  but has no occurrences of $(x\,z)$. Clearly {\bf w} also satisfies the hypotheses of the lemma, and $|{\bf w}|=
|{\bf v}|\in\{k+1,k+2\}$, since ${\bf w} = \langle{\bf a'},(x\,y)_1,{\bf b'},(x\,y)_2,{\bf c}\rangle$ where ${\bf a'}:=\langle{\bf a},(x\,z)\rangle$ and where ${\bf b'}$ is the sequence created by removing the 
leftmost term $(y\,z)$ of {\bf b}. So in this fashion too, $(x\,y)_1$  migrates one space rightward towards $(x\,y)_2$. The rightward migrations of $(x\,y)_1$  continue until $(x\,y)_1$ either abuts on $(x\,y)_2$ or 
on some occurrence of $(x\,y)$ to the left of $(x\,y)_2$. Thus the rightward migrations of $(x\,y)_1$ ultimately result in a sequence ${\bf w'}$ with $|{\bf w'}|\le k$ and for which $\bigcirc{\bf w'}\simeq\bigcirc{\bf u}$.  
Thus the inductive step is successful, and the theorem follows. \end{proof}

\begin{thm}\label{n=3} Let {\bf u} be a sequence in $1^12^1$ with $\bigcup${\rm Supp}$({\bf u})=3$. Then {\bf u} is {\rm CI} if and only if either $|{\bf u}|$ is odd or ${\cal T}({\bf u})$ is a 
multitree with at least one simple multiedge. \end{thm}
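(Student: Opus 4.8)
The plan is to exploit the extreme smallness of ${\rm Sym}(3)$. Its six elements fall into exactly three conjugacy classes (Types): the identity $\iota$ (Type $1^3$), the three transpositions (Type $1^12^1$, which is all of ${\rm Blt}(3)$), and the two $3$-cycles (Type $3^1$). Thus a sequence ${\bf u}$ in $1^12^1$ is CI precisely when ${\rm Prod}({\bf u})$ meets only one of these classes. First I would dispose of the odd-length case for free: if $|{\bf u}|$ is odd then $\bigcirc{\bf r}\in{\rm Blt}(3)$ for every ${\bf r}\in{\rm Seq}({\bf u})$, so ${\rm Prod}({\bf u})\subseteq 1^12^1$ lies in a single Type and ${\bf u}$ is CI with no further hypotheses. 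Hence the whole content lies in the even-length case, where ${\rm Prod}({\bf u})\subseteq{\rm Alt}(3)$, the disjoint union of $\{\iota\}$ and $3^1$; there ${\bf u}$ is CI iff ${\rm Prod}({\bf u})=\{\iota\}$ or ${\rm Prod}({\bf u})\subseteq 3^1$. Since $\bigcup{\rm Supp}({\bf u})=3$ forces at least two distinct (hence non-commuting) transpositions, I expect a $3$-cycle always to occur in ${\rm Prod}({\bf u})$; granting this heuristic, the even-length dichotomy collapses to the single question of whether $\iota\in{\rm Prod}({\bf u})$, so that CI $\iff\iota\notin{\rm Prod}({\bf u})$.

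For the ``if'' direction (even length, ${\cal T}({\bf u})$ a multitree with a simple multiedge) I would invoke Theorem \ref{OddTree}. On three vertices a multitree is two multiedges, say $(0\,1)$ and $(0\,2)$ of multiplicities $a,b\ge1$, meeting at a common vertex; ``simple multiedge'' means $\min\{a,b\}=1$, and evenness of $a+b$ then forces both $a$ and $b$ odd. Hence ${\cal T}({\bf u})$ has no even-multiplicity multiedge, and its only possible non-simple multiedge sits at a single vertex, so the hypotheses of Theorem \ref{OddTree} hold and ${\rm Prod}({\bf u})\subseteq 3^1$. In particular $\iota\notin{\rm Prod}({\bf u})$ and ${\bf u}$ is CI.

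For the ``only if'' direction I must show that in the two remaining even-length situations --- a multitree with \emph{no} simple multiedge ($a,b\ge2$) and a multitriangle (all three multiplicities $\ge1$) --- the identity $\iota$ \emph{is} attainable as a product while a $3$-cycle is attainable too, so that ${\rm Prod}({\bf u})$ meets both Types $1^3$ and $3^1$. The device I would use is the dihedral representation of ${\rm Sym}(3)$: writing $r:=(0\,1\,2)$ and $s:=(1\,2)$, each transposition equals $r^{e}s$ with $e\in\{0,1,2\}$, and the relation $s\circ r\circ s=r^{-}$ gives, for even $N=|{\bf u}|$, the clean identity $\bigcirc{\bf r}=r^{E}$ where $E\equiv e_1-e_2+e_3-\cdots-e_N\pmod 3$ is the alternating sum of the exponents. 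So $\bigcirc{\bf r}=\iota$ iff the equipartition of the exponent multiset into odd-position and even-position halves has equal sums mod $3$, and the attainable $E$ are governed entirely by which residues arise as half-sums. For the multitree (two exponent values $\alpha\ne\beta$) a short computation gives $E=(2j-a)(\alpha-\beta)$ when the plus-half contains $j$ copies of $\alpha$, so the attainable $E$ form an arithmetic progression of length $\min\{a,b\}+1$ and nonzero common difference mod $3$; these exhaust $\mathbb Z/3$ (hence include $0$) exactly when $\min\{a,b\}\ge2$, i.e. in the no-simple-edge case, yielding both $\iota$ and a $3$-cycle.

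The step I expect to be the main obstacle is the multitriangle case, where all three exponents occur. Here I would isolate a small lemma: for a $\mathbb Z/3$-multiset containing at least one each of $0,1,2$ and any $h$ with $1\le h\le N-1$, the size-$h$ sub-multiset sums realize every residue mod $3$ --- proved by fixing an $(h-1)$-element core and appending in turn each of the guaranteed $0,1,2$, and, in the boundary case $h=N-1$, by deleting in turn each of them. Applying this with $h=N/2$ (legitimate since the multitriangle forces $N\ge4$) forces all values of $E$, in particular $E=0$, so both $\iota$ and a $3$-cycle lie in ${\rm Prod}({\bf u})$ and ${\bf u}$ is not CI. Assembling the odd-length case, the Theorem \ref{OddTree} application, and these two non-CI constructions then yields the stated characterization.
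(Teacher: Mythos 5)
Your proof is correct, but it takes a genuinely different route from the paper's. The paper dispatches the odd case by parity exactly as you do, but then handles the even case by hand: for the CI direction it writes the multitree as $\langle(0\,1),(1\,2)^{\beta(2i+1)}\rangle$ and checks directly that every rearrangement multiplies out to a $3$-cycle, and for the non-CI direction it reduces (by deleting pairs of equal transpositions, an idea only formalized later in Lemma \ref{Reduced}) to four minimal configurations --- multitree multiplicities $(2,2)$ and $(3,3)$, multitriangle multiplicities $(1,1,2)$ and $(2,2,2)$ --- exhibiting in each an explicit rearrangement whose product is not conjugate to $\bigcirc{\bf u}$. You instead (i) obtain the CI direction by citing Theorem \ref{OddTree}, which is legitimate and non-circular since that theorem precedes this one and its proof is independent of it (evenness of $|{\bf u}|$ together with one simple multiedge forces both multiplicities odd, so its hypotheses do hold); and (ii) obtain the non-CI direction uniformly in the multiplicities via the dihedral presentation of ${\rm Sym}(3)$: each transposition is $r^{e}s$, an even-length product is $r^{E}$ with $E=2h-S$, and rearrangement freedom is precisely the freedom to choose which half-size sub-multiset of exponents occupies the odd positions; your arithmetic-progression count in the multitree case and your sub-multiset-sum lemma in the multitriangle case (whose two-part proof, core-plus-append for $h\le N-2$ and complement-deletion for $h=N-1$, is sound, and whose application at $h=N/2$ is legitimate because $N\ge4$) then show that every residue $E$, hence both $\iota$ and a $3$-cycle, is attained. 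What each approach buys: the paper's is short and entirely elementary but leans on an ``obvious'' reduction step; yours needs no reduction, treats arbitrary multiplicities in one stroke, and actually computes ${\rm Prod}({\bf u})$ exactly in every case (for instance, in the CI case $E\in\{\pm(\alpha-\beta)\}$, so ${\rm Prod}({\bf u})$ consists of exactly the two $3$-cycles, matching the paper's computation), at the cost of some algebraic bookkeeping. One cosmetic remark: the opening ``heuristic'' that a $3$-cycle always lies in ${\rm Prod}({\bf u})$ is never actually used --- your case analysis establishes both memberships outright --- so it could simply be deleted.
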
 

\begin{proof} If $|{\bf u}|$ is odd then Prod$({\bf u})\subseteq 1^12^1$, and so {\bf u} is CI. For the rest of the proof we take $|{\bf u}|$ to be even.  

Let ${\cal T}({\bf u})$ be a tree with a simple multiedge $(0\, 1)$. If the multiedge $(1\,2)$ is simple too, then {\bf u} is CI. So take it that ${\bf u} := \langle(0\,1),(1\,2)^{\beta(2i+1)}\rangle$ for some $i\ge1$. 
Let ${\bf r}\in$ Seq$({\bf u})$. Then ${\bf r} = \langle(1\,2)^{\beta(j)},(0\,1),(1\,2)^{\beta(2i+1-j)}\rangle$ for some $j\in 2i+2$. So $\bigcirc{\bf r}=(1\,2)^j\circ(0\,1)\circ(1\,2)^{2i+1-j}$. If $j$ is even then 
$2i+1-j$ is odd, whence $\bigcirc{\bf r}= (0\,1)\circ(1\,2)=(0\,2\,1)\in3^1$, and if $j$ is odd then $2i+1-j$ is even, and so $\bigcirc{\bf r}=(1\,2)\circ(0\,1)=(0\,1\,2)\in3^1$. Therefore {\bf u} is CI in the event 
that ${\cal T}({\bf u})$ is a multitree, one of whose multiedges has multiplicity one.\vspace{.3em}

To establish the converse, we first consider the case where ${\cal T}({\bf u})$ is a multitree, and assume it has no simple multiedge. We can take it that ${\bf u}:=\langle(0\,1)^{\beta(i)},(1\,2)^{\beta(j)}\rangle$, 
where $i\ge2$ and $j\ge2$ and $i+j$ is even. The argument about this multitree obviously reduces to only two cases. 

\underline{Case}. $i=j=2$. Then $\bigcirc{\bf u}=\iota\restrict3\not\simeq(0\,1\,2)=((0\,1)\circ(1\,2))^2$.

\underline{Case}. $i=j=3$. Then $\bigcirc{\bf u}=(0\,2\,1)\not\simeq\iota\restrict3=(0\,2\,1)^3=((0\,1)\circ(1\,2))^3$.\vspace{.3em} 

Now suppose that ${\cal T}({\bf u})$ is a multitriangle with \ ${\bf u}:=\langle(0\,1)^{\beta(a)},(1\,2)^{\beta(b)},(2\,0)^{\beta(c)}\rangle$, where $1\le\min\{a,b,c\}$ and where $a+b+c$ is even. The argument 
again reduces to two cases.

\underline{Case}. $a=b=1$ and $c=2$. Then $\bigcirc{\bf u} = (0\,2\,1)\not\simeq\iota\restrict3=(0\,1)\circ(2\,0)\circ(1\,2)\circ(2\,0)$. 

\underline{Case}. $a=b=c=2$. Then $\bigcirc{\bf u}=\iota\restrict3\not\simeq(0\,1\,2)=(0\,1)\circ(1\,2)\circ(2\,0)\circ(1\,2)\circ(0\,1)\circ(2\,0)$.\vspace{.3em}

In all four cases we found an ${\bf r}\in$ Seq$({\bf u})$ with $\bigcirc{\bf r}\not\simeq\bigcirc{\bf u}$. So {\bf u} is not CI.  \end{proof}

Henceforth {\bf u} is a sequence in $1^{n-2}2^1$ for which ${\cal T}({\bf u})$ a connected multigraph whose vertex set is $n$. We have characterized the CI sequences for $n<4$. From now on, $n\ge4$. 
The {\bf u} we will be treating are of two sorts: \ {\sf One:} ${\cal T}({\bf u})$ is a multitree. \ {\sf Two:} ${\cal T}({\bf u})$ has a circuit subgraph. First we treat Sort One. \vspace{.5em}

By an  $m$-{\em twig} of a multigraph ${\cal G}$ we mean any multiplicity-$m$ multiedge $(v\,w)$, one of whose vertices has exactly one neighbor in ${\cal G}$. If $w$ is the only neighbor of the vertex $v$, 
then $v$ is the {\em leaf} of the multitwig.  

\begin{thm}\label{SufficiencyCI} Let the transpositional multitree ${\cal T}({\bf u})$ have exactly $b$ multiedges of even multiplicity, where ${\bf u}:=\langle u_0,u_1,\ldots, u_{k-1}\rangle$ is of length 
$|{\bf u}|:=k\ge3$ in $1^{n-2}2^1$ with $n\ge4$. Let the following two conditions hold: 

{\bf 3.6.1} \ No vertex lies on more than one non-simple multiedge. 

{\bf 3.6.2} \ Each even-multiplicity multiedge is a multitwig whose non-leaf vertex has exactly two neighbors. 

\noindent Then {\rm Prod}$({\bf u})\subseteq 1^b(n-b)^1$, and therefore {\bf u} is {\rm CI}. \end{thm}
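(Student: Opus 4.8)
The plan is to induct on $b$, the number of even-multiplicity multiedges, taking as the base case $b=0$: there Theorem \ref{OddTree} applies verbatim, since its hypotheses are exactly 3.6.1 together with the absence of even multiedges, and its conclusion ${\rm Prod}({\bf u})\subseteq n^1$ is the desired $1^0(n-0)^1$. For the inductive step I would single out one even multitwig $(x\,y)$ with leaf $v$ and multiplicity $2m$ (writing $v$ for the leaf and $w$ for the non-leaf endpoint); by 3.6.2 the vertex $w$ has exactly two neighbours, $v$ and one other vertex $p$, and by 3.6.1 the edge $(w\,p)$ must be simple. Hence among the terms of ${\bf u}$ the only ones whose support meets $\{v,w\}$ are the $2m$ copies of $(v\,w)$ and the single $(w\,p)$.

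Fix an arbitrary ${\bf r}\in{\rm Seq}({\bf u})$ and write $\bigcirc{\bf r}=A\circ(w\,p)\circ B$, where $A$ and $B$ are the products of the terms lying respectively before and after the unique occurrence of $(w\,p)$. Every term of $A$ and of $B$ other than a copy of $(v\,w)$ has support disjoint from $\{v,w\}$ and so commutes with $(v\,w)$; collecting the $(v\,w)$'s therefore lets me write $A=(v\,w)^a\circ A'$ and $B=(v\,w)^{2m-a}\circ B'$, where $a$ counts the $(v\,w)$'s preceding $(w\,p)$ and $A',B'$ are products of transpositions none of which meets $\{v,w\}$. The crux is a parity split. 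If $a$ is even, both exponents are even and $\bigcirc{\bf r}=A'\circ(w\,p)\circ B'$, which fixes $v$ outright. If $a$ is odd, both exponents are odd; commuting the leading $(v\,w)$ past $A'$ and using $(v\,w)\circ(w\,p)\circ(v\,w)=(v\,p)$ gives $\bigcirc{\bf r}=A'\circ(v\,p)\circ B'$, which fixes $w$. Either way $\bigcirc{\bf r}$ has an isolated fixed point, and the permutation it induces on the remaining $n-1$ points is a rearrangement-product of the multitree gotten by contracting the twig $(v\,w)$ to one vertex joined to $p$ by a simple edge.

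This contracted multitree has $n-1$ vertices and exactly $b-1$ even multitwigs, and I would verify that it still satisfies 3.6.1 and 3.6.2: the only local change is that a non-simple twig is replaced by a simple pendant edge, which creates no new even multiedge, removes no incidence that the conditions constrain, and leaves the degree of $p$ — and of every vertex carrying some other even twig — unchanged. By the inductive hypothesis every rearrangement-product of this smaller multitree lies in $1^{b-1}\big((n-1)-(b-1)\big)^1=1^{b-1}(n-b)^1$, so adjoining the single fixed point yields $\bigcirc{\bf r}\in 1^b(n-b)^1$. As ${\bf r}$ was arbitrary this gives ${\rm Prod}({\bf u})\subseteq 1^b(n-b)^1$; and since $\bigcirc{\bf u}$ is itself one such product, $1^b(n-b)^1={\rm Type}(\bigcirc{\bf u})$, whence {\bf u} is CI.

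I expect the main obstacle to be the bookkeeping that certifies the contracted multitree remains a tree on $n-1$ vertices still meeting 3.6.1--3.6.2, together with the observation that the two contractions produced by the two parity cases — deleting $v$ versus deleting $w$ — yield isomorphic multitrees, so that the single isomorphism-invariant conclusion of the inductive hypothesis covers both cases at once. The algebraic heart, that either parity forces a fixed point via $(v\,w)\circ(w\,p)\circ(v\,w)=(v\,p)$, is short once the commuting collection of the $(v\,w)$ copies is justified.
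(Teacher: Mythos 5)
Your proposal is correct and takes essentially the same route as the paper's proof: induction anchored at Theorem \ref{OddTree}, selection of an even-multiplicity multitwig whose copies commute with every term of ${\bf u}$ except the unique simple edge at its non-leaf vertex, and reduction to the $(n-1)$-vertex multitree with $b-1$ even multitwigs. Your explicit parity split — collecting the $(v\,w)$ copies around $(w\,p)$ and using $(v\,w)\circ(w\,p)\circ(v\,w)=(v\,p)$, with the two cases giving isomorphic contracted multitrees — simply spells out, more carefully than the paper does, the step the paper compresses into the bare assertion that $f\leftrightarrow f\cup(0)$ is a one-to-one matching between ${\rm Prod}({\bf v})$ and ${\rm Prod}({\bf u})$.
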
  

\begin{proof} Given $n\ge4$, we induce on $b\in\{0,1,\ldots,n-1\}$. 

{\sf Basis Step}: $b=0$. This is just Theorem \ref{OddTree}. 

{\sf Inductive Step}: Suppose, for each $m\in\{4,5,\ldots, n-1\}$ and each $X\subseteq n$ with $|X|=m$, that the theorem holds for every transpositional sequence {\bf t} in Sym$(X)$ for which ${\cal T}({\bf t})$ 
is a multitree  with vertex set $X$. By hypothesis, {\bf u} is a sequence in $1^{n-2}2^1$ that satisfies 3.6.1 and 3.6.2, where ${\cal T}({\bf u})$ has exactly $b$ even-multiplicity multitwigs, and where all of the 
non-multitwig multiedges of ${\cal T}({\bf u})$ are of odd multiplicity. Suppose $b\ge1$.

Let $(0\,1)$ be an even-multiplicity multitwig of ${\cal T}({\bf u})$ with leaf $0$. Let {\bf v} be the subsequence of {\bf u} obtained by removing all occurrences of $(0\,1)$ as terms in {\bf u}. Then ${\cal T}({\bf v})$ 
is a multitree on the set $X := n\setminus\{0\}$. Obviously ${\cal T}({\bf v})$ is a multitree that satisfies 3.6.1 and 3.6.2 and that has exactly $b-1$ even-multiplicity multitwigs. Since $|X|=n-1$, the inductive hypothesis 
implies that Prod$({\bf v})\subseteq 1^{b-1}((n-1)-(b-1))^1 = 1^{b-1}(n-b)^1$ and that {\bf v} is CI. By 3.6.2, the only multiedge of ${\cal T}({\bf u})$, other than $(0\,1)$, to share the vertex $1$ is a simple multiedge 
$(1\,x)$ of ${\cal T}({\bf v})$, and $(1\,x)$ is the only term of {\bf u} that fails to commute with $(0\,1)$. So $f\leftrightarrow f\cup(0)$ is a one-to-one matching Prod$({\bf v})\leftrightarrow$ Prod$({\bf u})$. It follows 
that  Prod$({\bf u})\subseteq 1^b(n-b)^1$ since Prod$({\bf v})\subseteq 1^{b-1}(n-b)^1$ by the inductive hypothesis. \end{proof}

Lemma \ref{n=3} gives necessary and sufficient conditions for {\bf u} to be CI when $n\le3$. Theorems \ref{OddTree} and  \ref{SufficiencyCI} give sufficient conditions for {\bf u} to be CI when $n\ge4$. We will show 
that those conditions are also necessary for $n\ge4$. The crux is to establish that, if the connected multigraph ${\cal T}({\bf u})$ on the vertex set $n\ge4$, fails to be a multitree satisfying both 3.5.1 and 3.5.2, then 
{\bf u} is not CI.  This project involves two subprojects: 

The first such subproject will show that, if ${\cal T}({\bf u})$ is a ``pathological'' multitree -- which is to say, one for which either 3.6.1 or 3.6.2 fails,  then {\bf u} cannot be CI. 

The last will show that, if $n\ge4$ and ${\cal T}({\bf u})$ has a circuit submultigraph, then again {\bf u} cannot be CI.\vspace{.5em}

For the balance of \S3, the expression {\bf u} will denote a sequence in $1^{n-2}2^1$ with $|{\bf u}|\ge n-1\ge3$, and such that the transpositional multigraph ${\cal T}({\bf u})$ is connected on the vertex set 
$n$. \vspace{1em}

\centerline{\sf Subproject One: \ To prove that, if ${\cal T}({\bf u})$ is a pathological multitree, then {\bf u} fails to be CI}\vspace{.5em}

We call a sequence ${\bf s}$ {\sl reduced} iff no entity occurs more than three times as a term in ${\bf s}$..

When at least one entity occurs as a term in a sequence ${\bf a}$ more than $3$ times, we may produce a reduced subsequence ${\bf c}$ of ${\bf a}$ by means of a string  of ``elementary reductions'':

 If $x$ occurs as a term more than $3$ times in {\bf a}, then a subsequence {\bf b} of {\bf a} is an {\em elementary reduction} of {\bf a} if {\bf b} is obtained by removing from {\bf a} two occurrences of $x$. The resulting 
such ${\bf b}$ is of length $|{\bf a}|-2$. 

A {\em reduction} of ${\bf a}$ is any reduced subsequence of ${\bf a}$ that results from a sequence of elementary reductions.  

Clearly each sequence ${\bf u}$ in $1^{n-2}2^1$ has a unique reduced subsequence. If ${\bf r}$ is a reduced subsequence of a transpositional sequence ${\bf s}$ then of course ${\rm Seq}({\bf r})$ is the set of all reduced  subsequences of elements in ${\rm Seq}({\bf s})$. 

We will employ the contrapositive version of the following obvious fact:

\begin{lem}\label{Reduced} A reduced subsequence of a {\rm CI} transpositional sequence is {\rm CI}. \end{lem}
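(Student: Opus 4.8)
The plan is to lean on the single algebraic fact that a transposition is an involution, so that inserting two adjacent copies of the same transposition leaves the compositional product unchanged, and then to push conjugacy information from the full sequence ${\bf s}$ down to its reduced subsequence ${\bf r}$ using the fact that $\simeq$ is an equivalence relation. Let ${\bf s}$ be CI and let ${\bf r}$ be its (unique) reduced subsequence; I must show that every element of ${\rm Prod}({\bf r})$ lies in ${\rm Type}(\bigcirc{\bf r})$.

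First I would record the bookkeeping of reduction. For each transposition $t$, its multiplicity $\mu_t$ in ${\bf s}$ and its multiplicity $\mu'_t$ in ${\bf r}$ satisfy $\mu_t-\mu'_t\ge0$, and this difference is \emph{even}, since every elementary reduction deletes exactly two copies of a single term. Hence the multiset of terms of ${\bf s}$ is obtained from that of ${\bf r}$ by adjoining, for each $t$, an even number $\mu_t-\mu'_t$ of extra copies of $t$.

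For the main step, let ${\bf r}'\in{\rm Seq}({\bf r})$ be arbitrary. I would construct a rearrangement ${\bf s}'$ of ${\bf s}$ by inserting into ${\bf r}'$, for each $t$, the $\mu_t-\mu'_t$ missing copies of $t$ grouped into adjacent pairs $\langle t,t\rangle$, placed anywhere. Because $t\circ t=\iota\restrict n$ for a transposition $t$, each inserted adjacent block $\langle t,t\rangle$ contributes the identity, so $\bigcirc{\bf s}'=\bigcirc{\bf r}'$ \emph{exactly}, not merely up to conjugacy. Since ${\bf s}'$ has the same multiset of terms as ${\bf s}$, we have ${\bf s}'\in{\rm Seq}({\bf s})$, and the hypothesis that ${\bf s}$ is CI gives $\bigcirc{\bf s}'\simeq\bigcirc{\bf s}$; therefore $\bigcirc{\bf r}'\simeq\bigcirc{\bf s}$. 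Applying this once to an arbitrary ${\bf r}'$ and once to ${\bf r}$ itself (which is trivially a rearrangement of ${\bf r}$) yields $\bigcirc{\bf r}'\simeq\bigcirc{\bf s}$ and $\bigcirc{\bf r}\simeq\bigcirc{\bf s}$, whence $\bigcirc{\bf r}'\simeq\bigcirc{\bf r}$ by transitivity and symmetry of $\simeq$. As ${\bf r}'$ ranges over ${\rm Seq}({\bf r})$, this says precisely that ${\rm Prod}({\bf r})\subseteq{\rm Type}(\bigcirc{\bf r})$, i.e. ${\bf r}$ is CI.

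I do not anticipate a genuine obstacle, since the statement is as routine as the text advertises; the only point needing care is the verification in the main step that reinserting the deleted copies as adjacent pairs simultaneously returns a bona fide element of ${\rm Seq}({\bf s})$ (same multiset of terms) and leaves the product literally unchanged, which is exactly where involutivity of transpositions together with the parity observation of the second paragraph is used.
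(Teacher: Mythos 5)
Your proof is correct: the pair-insertion argument — using that reduction deletes copies of a term two at a time, that transpositions are involutions so adjacent inserted pairs $\langle t,t\rangle$ leave the product literally unchanged, and that the resulting sequence is a genuine rearrangement of ${\bf s}$ — is exactly the reasoning the paper leaves implicit, since it states this lemma as an ``obvious fact'' with no written proof. Your care about the parity of $\mu_t-\mu'_t$ and the final appeal to transitivity of $\simeq$ close the argument completely; there is nothing to fix.
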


We henceforth take it that all of our transpositional sequences are reduced, unless specified otherwise.

\begin{lem}\label{Untwig} If $\mu_{\bf u}(0\,1)=2$, but  if $(0\,1)$ is not a multitwig of the multitree ${\cal T}({\bf u})$, then {\bf u} fails to be {\rm CI}.  \end{lem}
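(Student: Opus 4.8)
The plan is to produce two rearrangements of {\bf u} whose products have different numbers of fixed points, hence different cycle types; since conjugate permutations share a cycle type, this exhibits some ${\bf r}\in{\rm Seq}({\bf u})$ with $\bigcirc{\bf r}\not\simeq\bigcirc{\bf u}$, and so shows that {\bf u} is not CI.

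First I would exploit the hypothesis that $(0\,1)$ is not a multitwig: it forces $0$ to have a neighbor $a\ne1$ and $1$ to have a neighbor $b\ne0$ in the multitree ${\cal T}({\bf u})$. Deleting the two copies of $(0\,1)$ severs the unique $0$--$1$ connection of the tree, splitting it into two subtrees $T_0\ni0$ and $T_1\ni1$, each with at least two vertices (because of $a$ and $b$). Let ${\bf u}_0$ and ${\bf u}_1$ be the subsequences of {\bf u} whose terms are supported in $T_0$ and in $T_1$; every term of ${\bf u}_0$ commutes with every term of ${\bf u}_1$, so their relative interleaving is immaterial to the product.

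Fix rearrangements ${\bf u}_0^\ast,{\bf u}_1^\ast$ and set $P_0:=\bigcirc{\bf u}_0^\ast$ and $P_1:=\bigcirc{\bf u}_1^\ast$, supported on $T_0$ and $T_1$ respectively. I would compare the ``cancelling'' rearrangement ${\bf r}:=\langle{\bf u}_0^\ast,{\bf u}_1^\ast,(0\,1),(0\,1)\rangle$, whose product is $P_0P_1$, with the ``straddling'' rearrangement ${\bf r}':=\langle{\bf u}_0^\ast,(0\,1),{\bf u}_1^\ast,(0\,1)\rangle$, whose product is $P_0\circ\big[(0\,1)\circ P_1\circ(0\,1)\big]$. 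Conjugating $P_1$ by $(0\,1)$ merely relabels $1$ as $0$ throughout its cycles; consequently, in $\bigcirc{\bf r}'$ the vertex $1$ becomes a fixed point while $0$ is absorbed into the cycle that formerly carried $1$, and every cycle not meeting $\{0,1\}$ is identical for $\bigcirc{\bf r}$ and $\bigcirc{\bf r}'$. Hence, provided $0$ is moved by $P_0$ and $1$ is moved by $P_1$, the product $\bigcirc{\bf r}$ fixes neither $0$ nor $1$, whereas $\bigcirc{\bf r}'$ fixes $1$; so $\bigcirc{\bf r}'$ has exactly one more fixed point than $\bigcirc{\bf r}$, and the two are non-conjugate.

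It remains to arrange ${\bf u}_0$ so that $0$ is moved and ${\bf u}_1$ so that $1$ is moved. I would prove the auxiliary fact that a reduced connected multitree on at least two vertices has a rearrangement of its edge-multiset whose product moves a prescribed vertex $v$, the sole exception being a lone double edge incident to $v$; for the simple-tree case this is exactly the cyclic-product result underlying the Basis Step of Theorem~\ref{OddTree} (see \cite{Denes,Silberger}), and the general case follows by cancelling surplus adjacent repetitions. In the excluded degenerate case $T_0=\{0,a\}$ with $\mu_{\bf u}(0\,a)=2$ (so $0$ cannot be moved), I would instead use the identity $(0\,a)\circ(0\,1)\circ(0\,a)\circ(0\,1)=(0\,1\,a)$: the rearrangement $\langle(0\,a),(0\,1),(0\,a),(0\,1),{\bf u}_1^\ast\rangle$ leaves neither $0$ nor $a$ fixed, whereas the cancelling $\langle(0\,a),(0\,a),(0\,1),(0\,1),{\bf u}_1^\ast\rangle$ fixes both $0$ and $a$; with the contribution of ${\bf u}_1^\ast$ identical, these again differ in fixed-point count, and the case $T_1$ degenerate is symmetric. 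In every case ${\rm Prod}({\bf u})$ contains permutations of two different types, so {\bf u} is not CI. I expect the main obstacle to be the auxiliary ``movability'' fact together with the pinning-down of its unique degenerate exception, and the careful tracking of which cycles merge or split under conjugation by $(0\,1)$.
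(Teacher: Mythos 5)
Your proposal is correct, and its core is exactly the paper's own argument: sever the multitree at the doubled edge $(0\,1)$ into two sides, compare the ``cancelling'' arrangement, whose product is $P_0P_1$, with the ``straddling'' arrangement, whose product is $P_0\circ(0\,1)\circ P_1\circ(0\,1)$, and note that the straddle merges the cycle through $0$ with the cycle through $1$ while turning $1$ into a fixed point, so the two products have different cycle types. Where you genuinely go beyond the paper is on the non-degeneracy point. The paper's proof simply ``observes'' that the cycle $f_0$ of $\bigcirc{\bf v}_0$ through $0$ and the cycle $f_1$ of $\bigcirc{\bf v}_1$ through $1$ are not $1$-cycles, and that observation is false in general, even for reduced sequences: if the $0$-side is a lone double edge, ${\bf v}_0=(0\,a)^{\beta(2)}$, then every arrangement of that side has product equal to the identity, $f_0=(0)$, and the paper's chosen pair ${\bf u}$, ${\bf u'}$ then satisfies $\bigcirc{\bf u'}=\bigcirc{\bf u}$, witnessing nothing. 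Your auxiliary ``movability'' fact (a reduced multitree on at least two vertices admits an arrangement whose product moves a prescribed vertex, the sole exception being a lone double edge at that vertex), together with the explicit patch $(0\,a)\circ(0\,1)\circ(0\,a)\circ(0\,1)=(0\,1\,a)$ compared against the cancelling order, handles precisely the cases the paper's assertion skips; so your write-up not only matches the published route but repairs a real gap in it. One caution: the movability fact does require the careful induction you allude to, not merely ``cancelling surplus adjacent repetitions,'' since pairing off a multiplicity-$2$ edge can strand the prescribed vertex --- e.g.\ for $(v\,a)^{\beta(2)},(a\,b)$ it is the straddling order $\langle(v\,a),(a\,b),(v\,a)\rangle$, not the cancelling one, that moves $v$ --- but the fact itself is true and your induction goes through.
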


\begin{proof} In the spirit developed earlier, ``$\mu_{\bf u}$'' is an abbreviation for ``$\mu_{{\cal T}({\bf u})}$''. 

Let $\mu_{\bf u}(0\,1)=2$ and the multiedge $(0\,1)$ of ${\cal T}({\bf u})$ not be a multitwig. Let {\bf v} be the subsequence of {\bf u} resulting from the removal from {\bf u} of its two occurrences 
of $(0\,1)$ as terms. \ ${\cal T}({\bf v})$ is the disjoint union ${\cal G}_0\,\dot{\cup}\,{\cal G}_1$ of two multitrees, each of which has a vertex set containing more than one vertex since the excised multiedge $(0\,1)$ 
of ${\cal T}({\bf u})$ was not a multitwig. So {\bf v} consists of two nonempty complementary subsequences ${\bf v}_0$ and ${\bf v}_1$, with $|{\bf v}_0|+|{\bf v}_1|=|{\bf v}|=|{\bf u}|-2\ge n-3\ge1$, and 
for which ${\cal G}_0={\cal T}({\bf v}_0)$ and ${\cal G}_1 = {\cal T}({\bf v}_1)$. That is to say, the terms of ${\bf v}_i$ are the simple edges of ${\cal G}_i$ for each $i\in2$. 

Let $f_0$ be the component of $\bigcirc{\bf v}_0$ such that $0\in$ supp$(f_0)$, and let $f_1$ be the component of $\bigcirc{\bf v}_1$ such that $1\in$ supp$(f_1)$, observing that neither $f_0$ nor $f_1$  is a $1$-cycle. 
Since our real concern is Seq$({\bf u})$, we can take it that ${\bf u}=\langle{\bf v}_0,(0\,1)^{\beta(2)},{\bf v}_1\rangle$ and that ${\bf v}={\bf v}_0{\bf v}_1$.  Of course, $0$ is a vertex in ${\cal G}_0$  and $1$ is a vertex in 
${\cal G}_1$. Then $f_0$ and $f_1$ are disjoint nontrivial cyclic components of the permutation $\bigcirc{\bf u}=\bigcirc{\bf v}=\bigcirc{\bf v}_0\bigcirc{\bf v}_1$.

Define ${\bf u'} := \langle(0\,1),{\bf v}_0,(0\,1),{\bf v}_1\rangle \in$ Seq$({\bf u})$. All of the components of $\bigcirc{\bf u}$ other than $f_0$ and $f_1$ are components also of $\bigcirc{\bf u'}$. 
So the only change made to $\bigcirc{\bf u}$ that creates $\bigcirc{\bf u'}$ is the replacement of the two components $f_0$ and $f_1$ with a new pair  $(0)$ and $h$, where $h$ is a cycle with $1\in$ supp$(h)$, 
and with $|h|=|f_0|+|f_1|-1$. So $\bigcirc{\bf u'} \not\simeq \bigcirc{\bf u}$, and hence {\bf u} is not CI. \end{proof}

Lemma \ref{Untwig} shows without loss of generality for $n\ge4$ that, if the transpositional multitree ${\cal T}({\bf u})$ has an even-multiplicity multiedge which is not a multitwig, then {\bf u} cannot be CI. 

\begin{cor}\label{TwoTwo} Let $\mu_{\bf u}(0\,1) =\mu_{\bf u}(1\,2)=2$. Then {\bf u} is not {\rm CI}.  \end{cor}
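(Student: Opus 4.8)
The plan is to split the argument according to whether the multiedges $(0\,1)$ and $(1\,2)$ are multitwigs of the multitree ${\cal T}({\bf u})$. If either one fails to be a multitwig, then Lemma~\ref{Untwig} applies directly (to $(0\,1)$ as stated, or to $(1\,2)$ after the obvious relabeling) and shows at once that ${\bf u}$ is not CI. So the real content of the corollary is the remaining case in which both $(0\,1)$ and $(1\,2)$ are multitwigs.

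In that case I would first locate the leaves. Since vertex $1$ is an endpoint of both multitwigs, it has at least the two neighbors $0$ and $2$, so $1$ cannot be the (unique-neighbor) leaf of either multitwig. Hence $0$ is the leaf of $(0\,1)$ and $2$ is the leaf of $(1\,2)$; that is, $0$ and $2$ each have $1$ as their sole neighbor. Consequently $(0\,1)$ is the only term of ${\bf u}$ with $0$ in its support, and $(1\,2)$ is the only term with $2$ in its support. Let ${\bf w}$ be the subsequence of ${\bf u}$ got by deleting both occurrences of $(0\,1)$ and both of $(1\,2)$; then $\bigcirc{\bf w}$ fixes each of $0$ and $2$.

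Next I would exhibit two rearrangements in Seq$({\bf u})$ whose products lie in different conjugacy classes. Take ${\bf r}_1 := \langle{\bf w},(0\,1),(0\,1),(1\,2),(1\,2)\rangle$ and ${\bf r}_2 := \langle{\bf w},(0\,1),(1\,2),(0\,1),(1\,2)\rangle$. The two trailing pairs in ${\bf r}_1$ cancel, so $\bigcirc{\bf r}_1 = \bigcirc{\bf w}$; and a direct left-to-right computation gives $(0\,1)\circ(1\,2)\circ(0\,1)\circ(1\,2) = (0\,1\,2)$, whence $\bigcirc{\bf r}_2 = \bigcirc{\bf w}\circ(0\,1\,2)$. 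Writing the cycle of $\bigcirc{\bf w}$ through $1$ as $(1\,a_1\,\cdots\,a_{s-1})$ (a lone fixed point $1$ when $s=1$), none of whose entries equals $0$ or $2$, I would verify that right-multiplication by $(0\,1\,2)$ splices the two fixed points $0$ and $2$ onto this cycle to form the single $(s+2)$-cycle $(0\,1\,a_1\,\cdots\,a_{s-1}\,2)$, leaving every other cycle of $\bigcirc{\bf w}$ untouched.

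The conclusion is then immediate: $\bigcirc{\bf r}_1=\bigcirc{\bf w}$ fixes both $0$ and $2$, whereas $\bigcirc{\bf r}_2$ fixes neither, and in fact $\bigcirc{\bf r}_2$ has exactly two fewer fixed points than $\bigcirc{\bf r}_1$ (three fewer when $s=1$) because of the merger just described. Hence $\bigcirc{\bf r}_1\not\simeq\bigcirc{\bf r}_2$, so Prod$({\bf u})$ contains two non-conjugate permutations and ${\bf u}$ is not CI. The only step needing care is the cycle-structure bookkeeping of the previous paragraph—confirming that the sole net effect of inserting the factor $(0\,1\,2)$ is to absorb the isolated vertices $0$ and $2$ into the cycle carrying $1$, thereby altering the type; everything else is routine.
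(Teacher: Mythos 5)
Your proposal is correct and follows essentially the same route as the paper's proof: both reduce to the case where $(0\,1)$ and $(1\,2)$ are multitwigs via Lemma~\ref{Untwig}, delete the four offending terms to get a remainder whose product fixes $0$ and $2$, and then compare the cancelling arrangement with the interleaved arrangement whose extra factor is the $3$-cycle $(0\,1\,2)$, concluding non-conjugacy from the merging of the cycles $(0)$, $(2)$, and the cycle through $1$. The only cosmetic differences are that you argue by a case split rather than by contradiction, and that you place the four transpositions after, rather than before, the remaining subsequence.
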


\begin{proof} Pretend that {\bf u} is CI. It follows by Lemma \ref{Untwig} that both of the multiedges $(0\,1)$ and $(1\,2)$ of the multitree ${\cal T}({\bf u})$ are multitwigs. Therefore, since $n\ge4$, there exists 
$x\in n\setminus 3$ for which $(1\,x)$ is a multiedge of ${\cal T}({\bf u})$. Let {\bf v} be the subsequence of {\bf u} that is produced by the removal from {\bf u} of both of the terms that are occurrences of the 
transposition $(0\,1)$ and both of the terms that are occurrences of $(1\,2)$. Then $|{\bf v}| = |{\bf u}|-4 \ge 5-4 = 1$. Let $f$ be the component of $\bigcirc{\bf v}$ with either $f = (1)$ or $1\in$ supp$(f)$. 
Observe that $\{0,2\}\cap\,$supp$(f) = \emptyset$. Since our interest lies in the sets Seq$({\bf u})$ and Seq$({\bf v})$, we can suppose that ${\bf u}=(0\,1)^{\beta(2)}(1\,2)^{\beta(2)}{\bf v}$. Of course $f$ is a 
component of $\bigcirc{\bf v}$ = $\bigcirc{\bf u}$. Defining ${\bf u'} := \langle\big((0\,1),(1\,2)\big)^{\beta(2)},{\bf v}\rangle \in$ Seq$({\bf u})$, we see that $\bigcirc{\bf u'} = (0\,1\,2)\circ\bigcirc{\bf v}$, a 
permutation which is identical to the permutation $\bigcirc{\bf u}$ in all component cycles that are disjoint from $3\,\cup\,$supp$(f)$. Where $(0), (2)$, and $f$ are components of $\bigcirc{\bf u}$, the permutation 
$\bigcirc{\bf u'}$ instead has the cycle $(0\,1\,2)\circ f$ of length $|f|+2$. Thus $\bigcirc{\bf u'}\not\simeq\bigcirc{\bf u}$, and so {\bf u} is not CI. \end{proof}

\begin{cor}\label{TwoThree} Let  ${\bf u} := (0\,1)^{\beta(2)}(1\,2)^{\beta(3)}{\bf v}$ and {\bf v} be sequences in $1^{n-2}2^2$, where neither $(0\,1)$ nor $(1\,2)$ is a term in {\bf v}. Then {\bf u} is not {\rm CI}.  
\end{cor}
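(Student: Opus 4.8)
The plan is to exhibit two rearrangements of ${\bf u}$ whose products have different cycle types, so that by definition ${\bf u}$ cannot be CI. First I would normalize the configuration. Since $\mu_{\bf u}(0\,1)=2$ is even, Lemma~\ref{Untwig} lets me assume the multiedge $(0\,1)$ is a multitwig of the multitree ${\cal T}({\bf u})$ (otherwise ${\bf u}$ is already not CI), and since vertex $1$ also carries $(1\,2)$ its degree exceeds $1$, so the leaf of the twig is $0$. Hence $0$ is fixed by $\bigcirc{\bf w}$ for every ${\bf w}\in{\rm Seq}({\bf v})$, because $(0\,1)$ is the only term of ${\bf u}$ that moves $0$ and it does not occur in ${\bf v}$. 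Deleting $(0\,1)$ and $(1\,2)$ from the tree ${\cal T}({\bf u})$ splits it into the isolated vertex $0$ together with subtrees $T_1\ni1$ and $T_2\ni2$; for a fixed ${\bf w}\in{\rm Seq}({\bf v})$ let $f_1,f_2$ be the (distinct) cycles of $\bigcirc{\bf w}$ through $1$ and $2$, of lengths $p,q\ge1$, noting that $0\notin{\rm supp}(f_1)\cup{\rm supp}(f_2)$.

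The engine is a parity observation. The five transpositions $(0\,1),(0\,1),(1\,2),(1\,2),(1\,2)$, composed in any order, give an odd permutation supported in $\{0,1,2\}$, hence exactly one of $(0\,1),(0\,2),(1\,2)$; and each value is attained, since $\langle(1\,2),(0\,1),(1\,2),(0\,1),(1\,2)\rangle=(0\,1)$, $\langle(0\,1),(1\,2),(0\,1),(1\,2),(1\,2)\rangle=(0\,2)$, and the given block $(0\,1)^{\beta(2)}(1\,2)^{\beta(3)}=(1\,2)$. Placing any such block before ${\bf w}$ shows that $(0\,1)\circ\bigcirc{\bf w}$, $(0\,2)\circ\bigcirc{\bf w}$ and $(1\,2)\circ\bigcirc{\bf w}$ all lie in ${\rm Prod}({\bf u})$. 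Composing a transposition that links two of the disjoint cycles $(0),f_1,f_2$ merges them, so on ${\rm supp}(f_1)\cup{\rm supp}(f_2)\cup\{0\}$ these three products have cycle-length multisets $\{p+1,q\}$, $\{p,q+1\}$ and $\{1,p+q\}$ respectively, while agreeing on every other cycle. A short check shows the three multisets coincide only when $p=q=1$. Therefore, as soon as I can choose ${\bf w}\in{\rm Seq}({\bf v})$ for which $\bigcirc{\bf w}$ moves $1$ or moves $2$, two of the three products differ in type and ${\bf u}$ is not CI.

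The hard part is the residual case where $\bigcirc{\bf w}$ fixes both $1$ and $2$ for \emph{every} ${\bf w}\in{\rm Seq}({\bf v})$. The key lemma I would prove (by a migration argument in the spirit of Theorem~\ref{OddTree}: expose a single incident edge and route the remaining edges so the traveling point does not return) is that a vertex $v$ of the forest ${\cal T}({\bf v})$ is fixed by $\bigcirc{\bf w}$ for all orderings ${\bf w}$ if and only if the component of $v$ is trivial or is a single even-multiplicity multiedge $(v\,w)$. Connectedness of ${\cal T}({\bf u})$ with $n\ge4$ forces $|T_1|+|T_2|=n-1\ge3$, so at least one $T_i$ contains an edge; by the lemma, staying in the residual case requires that component to be an isolated even multiedge. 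If $T_1=(1\,a)^{\beta(2)}$, then vertex $1$ carries the two even multiedges $(0\,1)$ and $(1\,a)$, and Corollary~\ref{TwoTwo} applies directly. The sole remaining configuration is $T_1=\{1\}$ with $T_2=(2\,c)^{\beta(2)}$, i.e.\ the four-vertex path $0$--$1$--$2$--$c$ with multiplicities $2,3,2$; here $\bigcirc{\bf u}=(1\,2)\in2^1$, whereas $\langle(0\,1),(1\,2),(2\,c),(0\,1),(1\,2),(2\,c),(1\,2)\rangle$ has product the $4$-cycle $(0\,1\,c\,2)\in4^1$, so again ${\bf u}$ is not CI. I expect this migration lemma and the enumeration of the degenerate components to be the main obstacle; the rest is the parity-and-merging computation of the preceding paragraph.
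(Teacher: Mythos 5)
Your proposal is correct, and its engine coincides with the paper's own proof: after invoking Lemma~\ref{Untwig} to make $(0\,1)$ a multitwig with leaf $0$, the paper compares $\bigcirc{\bf u}=(1\,2)\circ\bigcirc{\bf v}$ with the products of ${\bf u}_2:=\langle(1\,2),(0\,1),(1\,2),(0\,1),(1\,2),{\bf v}\rangle$ and ${\bf u}_1:=\langle(0\,1),(1\,2)^{\beta(3)},(0\,1),{\bf v}\rangle$, i.e.\ with $(0\,1)\circ\bigcirc{\bf v}$ and $(0\,2)\circ\bigcirc{\bf v}$ --- exactly your three block prefixes and your cycle-length multisets $\{p+1,q\}$, $\{p,q+1\}$, $\{1,p+q\}$. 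The genuine difference is your residual case $p=q=1$. The paper concludes by asserting that $\bigcirc{\bf u}$ has one more $1$-cycle and one fewer $2$-cycle than each $\bigcirc{\bf u}_i$, hence $\bigcirc{\bf u}_i\not\simeq\bigcirc{\bf u}$; that assertion fails when $\bigcirc{\bf v}$ fixes both $1$ and $2$, for then all three of these products have multiset $\{1,2\}$ on $\{0,1,2\}$ and agree elsewhere, so they are mutually conjugate and no contradiction results. Your example ${\bf u}=(0\,1)^{\beta(2)}(1\,2)^{\beta(3)}(2\,c)^{\beta(2)}$ shows this situation is not vacuous (there $\bigcirc{\bf v}=\iota$ for every ordering of ${\bf v}$), so your third paragraph is not excess caution: it closes a real hole in the paper's argument, and it does so correctly --- the fixed-for-all-orderings characterization for multiforests is true and provable by the edge-ordering manipulations you sketch, the relabelled appeal to Corollary~\ref{TwoTwo} is legitimate (it is proved before this corollary, so there is no circularity), and I have verified that $\langle(0\,1),(1\,2),(2\,c),(0\,1),(1\,2),(2\,c),(1\,2)\rangle$ has product the $4$-cycle $(0\,1\,c\,2)\not\simeq(1\,2)=\bigcirc{\bf u}$. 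The structural reason your extra case needs new rearrangements is worth noting: the paper only permutes the five-term block against a fixed ${\bf v}$, whereas your $4$-cycle is produced by interleaving terms of ${\bf v}$ with that block. The trade-off: the paper's write-up is shorter but incomplete; yours costs an auxiliary lemma (the one piece you still need to write out in full) and a short case split over the degenerate components, but it actually proves the corollary.
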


\begin{proof} Assume that {\bf u} is CI. By Lemma \ref{Untwig}, the multiedge $(0\,1)$ of the multitree ${\cal T}({\bf u})$ is a multitwig of ${\cal T}({\bf u})$. So there is a component $f$ of the permutation 
$\bigcirc{\bf v}$ to which exactly one of the following two cases applies.

\underline{Case}: Either  $2\in$ supp$(f)$ or $f = (2)$,  and $\bigcirc{\bf u} = (0\,1)^2\circ(1\,2)^3\circ\bigcirc{\bf v} = (1\,2)\circ\bigcirc{\bf v}$. So $f_2 := (1\,2)\circ f$ is a cyclic component of $\bigcirc{\bf u}$. 
Note: \ $|f_2|=|f|+1$, since the point $1$ is incorporated into the cycle $f$ in order to create $f_2$.  [Paradigm example: When $f := (2\,3\,4)$ then $f_2 = (1\,2)\circ f = (1\,2)\circ(2\,3\,4)=(1\,3\,4\,2)$.] Define 
${\bf u}_2 := \langle(1\,2),(0\,1),(1\,2),(0\,1),(1\,2),{\bf v}\rangle \in$ Seq$({\bf u})$. Then $\bigcirc{\bf u}_2 = (0\,1)\circ\bigcirc{\bf v} = (0\,1)\bigcirc{\bf v}$, and $f$ is a cyclic component of $\bigcirc{\bf u}_2$. 

\underline{Case}: Either $1\in{\rm supp}(f)$ or $f=(1)$, and $\bigcirc{\bf u} = (1\,2)\circ\bigcirc{\bf v}$. So $f_1 = (1\,2)\circ f$ is a cyclic component of $\bigcirc{\bf u}$. Let 
${\bf u}_1 := \langle(0\,1),(1\,2)^{\beta(3)},(0\,1),{\bf v}\rangle$. Then $\bigcirc{\bf u}_1 = (0\,2)\circ\bigcirc{\bf v} = (0\,2)\bigcirc{\bf v}$, and $f$ is a component of $\bigcirc{\bf u}_1$. But $|f_1|=|f|+1$.
  
We showed, for each $i\in2$, that $|f_i|=|f|+1$. Moreover, $\bigcirc{\bf u}$ has one more $1$-cycle and one fewer $2$-cycles than $\bigcirc{\bf u}_i$ has, while all other cyclic components of $\bigcirc{\bf u}_i$ 
are the same as those of $\bigcirc{\bf u}$.  Hence $\bigcirc{\bf u}_i\not\simeq\bigcirc{\bf u}$ in both Cases. Thus our assumption fails. Therefore {\bf u} is not CI.\end{proof}

\begin{lem}\label{ThreeThree} Let ${\bf u} := (0\,1)^{\beta(3)}(1\,2)^{\beta(3)}{\bf v}_0{\bf v}_1{\bf v}_2$ where neither $(0\,1)$ nor $(1\,2)$ is a term in the sequence ${\bf v}_0{\bf v}_1{\bf v}_2$, and where 
no vertex of ${\cal T}({\bf v}_i)$ is a vertex in ${\cal T}({\bf v}_j)$ if $i\not=j$. Then {\bf u} is not {\rm CI}. \end{lem}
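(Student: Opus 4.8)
The plan is to mimic Corollaries~\ref{TwoTwo} and~\ref{TwoThree}: assuming {\bf u} is {\rm CI}, I would exhibit one rearrangement ${\bf u}'\in{\rm Seq}({\bf u})$ with $\bigcirc{\bf u}'\not\simeq\bigcirc{\bf u}$, contradicting conjugacy invariance. First I record $\bigcirc{\bf u}$. Writing $g_i:=\bigcirc{\bf v}_i$ and using that the ${\bf v}_i$ have pairwise vertex-disjoint multigraphs (so $g_0,g_1,g_2$ have pairwise disjoint supports and commute), together with $(0\,1)^3=(0\,1)$ and $(1\,2)^3=(1\,2)$, I obtain
\[\bigcirc{\bf u}=(0\,1)\circ(1\,2)\circ g_0\circ g_1\circ g_2=(0\,2\,1)\circ g_0g_1g_2,\]
since $(0\,1)\circ(1\,2)=(0\,2\,1)$ under the left-to-right convention.

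The rearrangement I would use is ${\bf u}':=\langle\big((0\,1),(1\,2)\big)^{\beta(3)},{\bf v}_0,{\bf v}_1,{\bf v}_2\rangle$, which lies in ${\rm Seq}({\bf u})$ because it reuses exactly the three copies of $(0\,1)$, the three copies of $(1\,2)$, and every term of ${\bf v}_0{\bf v}_1{\bf v}_2$. Its product is
\[\bigcirc{\bf u}'=\big((0\,1)\circ(1\,2)\big)^3\circ g_0g_1g_2=(0\,2\,1)^3\circ g_0g_1g_2=g_0g_1g_2,\]
because $(0\,2\,1)^3=\iota\restrict n$. Thus $\bigcirc{\bf u}$ and $\bigcirc{\bf u}'$ differ only by the left factor $(0\,2\,1)$.

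The heart of the argument is a cycle count. I would first establish that $0$, $1$, $2$ lie in three distinct cycles of $g_0g_1g_2$: since ${\cal T}({\bf u})$ is a connected multitree whose only terms meeting the two junctions are $(0\,1)$ and $(1\,2)$, deleting all copies of these bridges splits ${\cal T}({\bf u})$ into the three vertex-disjoint multitrees ${\cal T}({\bf v}_0),{\cal T}({\bf v}_1),{\cal T}({\bf v}_2)$, which (after relabeling if needed) contain $0$, $1$, $2$ respectively. Hence ${\rm supp}(g_i)$ is confined to the vertex set of ${\cal T}({\bf v}_i)$, so the cycle of $g_0g_1g_2$ through each of $0,1,2$ stays inside a distinct one of these three disjoint vertex sets. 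Applying the transpositions $(1\,2)$ and then $(0\,1)$---i.e.\ composing on the left with $(0\,2\,1)=(0\,1)\circ(1\,2)$---then performs two successive cycle-merges: the first fuses the cycles through $1$ and $2$, the second fuses in the cycle through $0$, leaving every other cycle untouched. Consequently $\bigcirc{\bf u}$ has exactly two fewer cycles (fixed points included) than $\bigcirc{\bf u}'$, so the two products have different cycle types and $\bigcirc{\bf u}'\not\simeq\bigcirc{\bf u}$. Therefore {\bf u} is not {\rm CI}.

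I expect the main obstacle to be the structural step that $0$, $1$, $2$ occupy three distinct cycles of $g_0g_1g_2$---equivalently, that they sit in three different vertex-disjoint components ${\cal T}({\bf v}_i)$ (any of which may collapse to a single fixed point when the corresponding ${\bf v}_i$ is empty). Once that is secured, the merges rest only on the standard fact that multiplying a permutation by a transposition whose two points lie in different cycles fuses those cycles into one, and the rest is routine bookkeeping.
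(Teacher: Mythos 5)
Your proof is correct, and its overall strategy is the paper's: write $\bigcirc{\bf u}=(0\,2\,1)\circ g_0g_1g_2$ (with $g_i:=\bigcirc{\bf v}_i$), note that $0,1,2$ lie in three distinct cycles of $g_0g_1g_2$ which $\bigcirc{\bf u}$ fuses into one, and then exhibit a single rearrangement whose product keeps them separate, so the cycle counts differ. The only genuine difference is the witness. The paper takes ${\bf u'}=\langle(1\,2),(0\,1),{\bf v}_0,(1\,2),(0\,1),{\bf v}_1,(1\,2),(0\,1),{\bf v}_2\rangle$, so that $\bigcirc{\bf u'}=(0\,1\,2)\circ g_0\circ(0\,1\,2)\circ g_1\circ(0\,1\,2)\circ g_2$, and asserts that $f_0,f_1,f_2$ reappear as its components; your witness instead stacks all six transpositions in one alternating block whose product is $(0\,2\,1)^3=\iota$, giving $\bigcirc{\bf u}'=g_0g_1g_2$ on the nose. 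This buys real clarity: for your witness the cycles $f_0,f_1,f_2$ literally survive, whereas the paper's asserted cycle structure for its interleaved witness is not literally correct (e.g.\ with $g_0=(0\,3)$, $g_1=(1\,4)$, $g_2=(2\,5)$ one computes $\bigcirc{\bf u'}=(0)(1)(2\,3\,4\,5)$, not $(0\,3)(1\,4)(2\,5)$) — the paper's conclusion survives only because its product still has three cycles against one. Your count argument (two fewer cycles, fixed points included, hence different cycle types, hence $\bigcirc{\bf u}\not\simeq\bigcirc{\bf u}'$) is exactly the right invariant and rests only on the standard transposition-merge fact you cite.
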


\begin{proof} We can suppose at least one of the three subsequences ${\bf v}_i$ to be nonvacuous since $n\ge4$. Each ${\cal T}({\bf v}_i)$ is a (possibly one-vertex) submultigraph of 
the transpositional multitree ${\cal T}({\bf u})$, where for each $i\in3$ we are given that $i$ is a vertex in ${\cal T}({\bf v}_i)$. Now, $\bigcirc{\bf u} = (0\,2\,1)\circ\bigcirc{\bf v}_0\bigcirc{\bf v}_1\bigcirc{\bf v}_2$.  

For each $i\in3$, let $f_i$ be the component of $\bigcirc{\bf v}_i$ for which either $i\in$ supp$(f_i)$ or $f_i=(i)$. Then $\bigcirc{\bf u}$ has a cyclic component $f$ of length $|f|=|f_0|+|f_1|+|f_2|$ with 
$3\subseteq$ supp$(f)$. 

Let ${\bf u'} := \langle (1\,2),(0\,1),{\bf v}_0,(1\,2),(0\,1),{\bf v}_1,(1\,2),(0\,1){\bf v}_2 \rangle$. So $\bigcirc{\bf u'} = (0\,1\,2)\circ\bigcirc{\bf v}_0\circ(0\,1\,2)\circ\bigcirc{\bf v}_1\circ(0\,1\,2)\circ\bigcirc{\bf v}_2$ lacks the cyclic component $f$  of $\bigcirc{\bf u}$, but in place of $f$ it has the three cycles $f_0,   f_1$ and $ f_2$, and otherwise the cycles of $\bigcirc{\bf u'}$ are identical to those of $\bigcirc{\bf u}$. So $\bigcirc{\bf u'}\not\simeq\bigcirc{\bf u}$  although ${\bf u'}\in$ Seq$({\bf u})$. Therefore {\bf u} is not CI.        \end{proof} 

\begin{cor}\label{Fork} Let $\mu_{\bf u}(0\,1)=2$, let $\mu_{\bf u}(1\,2) = 1 = \mu_{\bf u}(1\,3)$, and let ${\bf u} := \langle(0\,1)^{\beta(2)},(1\,2),(1\,3),{\bf v}_2{\bf v}_3\rangle$, where the two submultigraphs 
 ${\cal T}({\bf v}_2)$ and ${\cal T}({\bf v}_3)$ of ${\cal T}({\bf u})$ are disjoint. Then {\bf u} is not {\rm CI}.  \end{cor}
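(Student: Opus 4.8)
The plan is to follow the template of Lemma~\ref{ThreeThree} and Corollary~\ref{TwoThree}: first compute the cycle type of $\bigcirc{\bf u}$, then exhibit a single rearrangement ${\bf u'}\in$ Seq$({\bf u})$ whose product has a demonstrably different cycle type, which forces $\bigcirc{\bf u'}\not\simeq\bigcirc{\bf u}$ and hence that {\bf u} is not CI.

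First I would evaluate $\bigcirc{\bf u}$. Since the two copies of $(0\,1)$ are adjacent, $(0\,1)\circ(0\,1)=\iota\restrict n$, so $\bigcirc{\bf u}=(1\,2)\circ(1\,3)\circ\bigcirc{\bf v}_2\circ\bigcirc{\bf v}_3=(1\,2\,3)\circ\bigcirc{\bf v}_2\circ\bigcirc{\bf v}_3$. Let $f_2$ be the cyclic component of $\bigcirc{\bf v}_2$ carrying $2$ and $f_3$ the component of $\bigcirc{\bf v}_3$ carrying $3$; by the disjointness of ${\cal T}({\bf v}_2)$ and ${\cal T}({\bf v}_3)$ their supports are disjoint, and neither contains $0$ nor $1$, both of which are fixed by $\bigcirc{\bf v}_2\bigcirc{\bf v}_3$. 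A short cycle-tracing then shows that the hub vertex $1$ splices $f_2$ and $f_3$ into one cycle: writing $f_2=(2\,a_1\cdots a_p)$ and $f_3=(3\,b_1\cdots b_q)$, the product $(1\,2\,3)\circ f_2\circ f_3$ is the single cycle $(1\,a_1\cdots a_p\,2\,b_1\cdots b_q\,3)$ of length $|f_2|+|f_3|+1$, while $0$ remains a fixed point of $\bigcirc{\bf u}$ and every other cyclic component of $\bigcirc{\bf v}_2\bigcirc{\bf v}_3$ is untouched.

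Next I would set ${\bf u'} := \langle(0\,1),(1\,2),(0\,1),(1\,3),{\bf v}_2{\bf v}_3\rangle\in$ Seq$({\bf u})$, inserting one $(0\,1)$ between $(1\,2)$ and $(1\,3)$. Because $(0\,1)\circ(1\,2)\circ(0\,1)=(0\,2)$ (conjugation of $(1\,2)$ by $(0\,1)$), we obtain $\bigcirc{\bf u'}=(0\,2)\circ(1\,3)\circ\bigcirc{\bf v}_2\circ\bigcirc{\bf v}_3$, in which $(0\,2)$ and $(1\,3)$ are disjoint. The same tracing now yields two separate cycles: on $\{0\}\cup{\rm supp}(f_2)$ the product restricts to $(0\,2)\circ f_2=(0\,a_1\cdots a_p\,2)$, of length $|f_2|+1$, and on $\{1\}\cup{\rm supp}(f_3)$ it restricts to $(1\,3)\circ f_3=(1\,b_1\cdots b_q\,3)$, of length $|f_3|+1$, with all remaining components as in $\bigcirc{\bf u}$. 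Thus detaching the hub $0$ and reassigning $1$ breaks the single long cycle of $\bigcirc{\bf u}$ into two.

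Finally I would compare types on the block $\{0,1\}\cup{\rm supp}(f_2)\cup{\rm supp}(f_3)$: the permutation $\bigcirc{\bf u}$ induces the cyclic partition $\{1,\,|f_2|+|f_3|+1\}$ there, whereas $\bigcirc{\bf u'}$ induces $\{|f_2|+1,\,|f_3|+1\}$, and outside this block the two products agree. Since $|f_2|+1\ge2$ and $|f_3|+1\ge2$, the multiset for $\bigcirc{\bf u'}$ contains no $1$, while that for $\bigcirc{\bf u}$ does (the fixed point $0$); hence the two multisets differ, so Type$(\bigcirc{\bf u'})\not=$ Type$(\bigcirc{\bf u})$ and $\bigcirc{\bf u'}\not\simeq\bigcirc{\bf u}$ with ${\bf u'}\in$ Seq$({\bf u})$, whence {\bf u} is not CI. The main obstacle is merely the bookkeeping of the two cycle-tracings, together with the degenerate cases $f_2=(2)$ or $f_3=(3)$ (an empty ${\bf v}_2$ or ${\bf v}_3$), which I would verify separately and which give the same conclusion.
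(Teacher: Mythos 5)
Your proposal is correct and is essentially the paper's own proof: you use the identical rearrangement ${\bf u'} := \langle(0\,1),(1\,2),(0\,1),(1\,3),{\bf v}_2{\bf v}_3\rangle$ and the same comparison of cycle types, where $\bigcirc{\bf u}$ has the fixed point $0$ plus one long cycle of length $1+|f_2|+|f_3|$ while $\bigcirc{\bf u'}$ splits this into two cycles of lengths $1+|f_2|$ and $1+|f_3|$, with all other components unchanged. The only cosmetic difference is that the paper first invokes Lemma \ref{Untwig} (under the CI assumption) to justify that $0$ is a leaf, whereas you read this directly off the hypotheses and the multitree context; your explicit cycle tracings and attention to the degenerate cases $f_2=(2)$, $f_3=(3)$ are sound.
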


\begin{proof}  Assume that {\bf u} is CI. By Lemma \ref{Untwig}, the multiedge $(0\,1)$ of ${\cal T}({\bf u})$ is a multitwig with leaf $0$. We take it that $f_2$ is a cyclic component of 
$\bigcirc{\bf v}_2$ for which either $2\in$ supp$(f_2)$ or $f_2=(2)$, and likewise that $f_3$ is a cyclic component of $\bigcirc{\bf v}_3$ for which either $3\in$ supp$(f_3)$ or $f_3=(3)$. Now 
$\bigcirc{\bf u} = (0)fg$, where $f$ is a cycle incorporating the point $1$ together with the points in $f_2$ and $f_3$ into a single cycle of consequent length $|f|=1+|f_2|+|f_3|$, where $g$ is a permutation 
that involves the points in $\{4,5,\ldots,n-1\}$ which occur neither in $f_2$ nor in $f_3$. On the other hand, defining  ${\bf u'} := \langle(0\,1),(1\,2),(0\,1),(1\,3),{\bf v}_2{\bf v}_3\rangle \in$ Seq$({\bf u})$, we 
find that $\bigcirc{\bf u'} = f'_2f'_3g$, where $f'_2$ is a cycle of length $|f'_2|=1+|f_2|$ that incorporates together the point $0$ and the points in the cycle $f_2$, and where $f'_3$ is a cycle of length 
$|f'_3|=1+|f_3|$ that incorporates together the point $1$ and the points in the cycle $f_2$. So $\bigcirc{\bf u'}\not\simeq\bigcirc{\bf u}$, violating our assumption that {\bf u} is CI. \end{proof}

Subproject One is completed. We summarize it in the following immediate conjunction of Lemma \ref{Untwig}, Corollaries \ref{TwoTwo} and \ref{TwoThree}, Lemma \ref{ThreeThree}, and Corollary \ref{Fork}:

\begin{thm}\label{NecessaryCI} Let ${\cal T}({\bf u})$ be a multitree with $n\ge4$. Then ${\bf u}$ is CI if and only if it satisfies 3.6.1 and 3.6.2,  \end{thm}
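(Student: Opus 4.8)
The plan is to prove the biconditional by separating sufficiency from necessity and recognizing that essentially all the analytic work is already in place; the task is to assemble the preceding results into an exhaustive case analysis. For sufficiency I would do nothing new: if ${\cal T}({\bf u})$ is a multitree satisfying 3.6.1 and 3.6.2, then Theorem \ref{SufficiencyCI} already gives ${\rm Prod}({\bf u})\subseteq 1^b(n-b)^1$ and concludes that ${\bf u}$ is CI, so I would simply cite it.

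For necessity I would argue the contrapositive: assuming 3.6.1 or 3.6.2 fails, I show ${\bf u}$ is not CI. First I would reduce to the case that ${\bf u}$ is reduced. Passing from ${\bf u}$ to its reduction $\overline{\bf u}$ leaves the underlying tree intact, preserves the simple/non-simple status of each multiedge, and preserves the parity of each multiplicity (elementary reductions delete terms in pairs). Hence ${\bf u}$ satisfies 3.6.1 and 3.6.2 if and only if $\overline{\bf u}$ does; and by the contrapositive of Lemma \ref{Reduced}, if $\overline{\bf u}$ is not CI then neither is ${\bf u}$. So I may assume all multiplicities lie in $\{1,2,3\}$, a non-simple multiedge has multiplicity $2$ or $3$, and an even-multiplicity multiedge has multiplicity exactly $2$.

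Now the case split. If 3.6.1 fails, some vertex --- relabel it $1$ --- lies on two distinct non-simple multiedges, with multiplicities drawn from $\{2,3\}$; relabeling the two neighbors and choosing the rearrangement of ${\bf u}$ that matches the relevant template (legitimate because ${\rm Prod}$, hence CI-ness, is invariant over ${\rm Seq}({\bf u})$), the three patterns $(2,2)$, $(2,3)$, $(3,3)$ are dispatched by Corollary \ref{TwoTwo}, Corollary \ref{TwoThree}, and Lemma \ref{ThreeThree}. In each the complementary subsequence(s) ${\bf v}$, resp. ${\bf v}_0{\bf v}_1{\bf v}_2$, are exactly the disjoint subtrees left after deleting the two multiedges at $1$; their disjointness is forced by the uniqueness of paths in a tree. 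If instead 3.6.1 holds but 3.6.2 fails, there is an even (multiplicity-$2$) multiedge $(x\,y)$ that is either not a multitwig --- handled directly by Lemma \ref{Untwig} --- or a multitwig with leaf $y$ whose non-leaf vertex $x$ does not have exactly two neighbors. Since $n\ge4$, $x$ has at least two neighbors (otherwise ${\cal T}({\bf u})$ is the single edge $(x\,y)$ and $n=2$), so the failure forces $x$ to have three or more; and because 3.6.1 holds, every edge at $x$ other than $(x\,y)$ is simple. This is the fork of Corollary \ref{Fork}, with $x=1$, $y=0$, and two simple neighbors named $2,3$.

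I expect the main obstacle to lie entirely in this last fork case, and to have two parts. First, I must verify the case split is genuinely exhaustive, i.e.\ that under reduction every failure of 3.6.1 or 3.6.2 matches exactly one template, and in particular that excising the offending multiedges always yields the disjoint subtrees the cited results presuppose. Second, Corollary \ref{Fork} as stated treats a non-leaf vertex with \emph{exactly} two simple tines, whereas a 3.6.2 failure may furnish $x$ with three or more. I would observe that the Fork argument generalizes verbatim: for ${\bf u}=\langle(0\,1)^{\beta(2)},(1\,2),(1\,3),\ldots,(1\,(d+1)),{\bf v}_2\cdots{\bf v}_{d+1}\rangle$, the product $\bigcirc{\bf u}$ fuses $1$ together with the components $f_2,\ldots,f_{d+1}$ into one long cycle while fixing $0$, whereas the rearrangement $\langle(0\,1),(1\,2),(0\,1),(1\,3),\ldots,(1\,(d+1)),{\bf v}_2\cdots{\bf v}_{d+1}\rangle$ produces $(0\,2)(1\,3\,4\cdots(d+1))\circ\bigcirc{\bf v}_2\cdots\bigcirc{\bf v}_{d+1}$, which splits that long cycle into two (one absorbing $0$) and thereby alters the cycle type; hence $\bigcirc{\bf u}'\not\simeq\bigcirc{\bf u}$ and ${\bf u}$ is not CI. Once these two points are settled, the remaining steps are routine bookkeeping, and the theorem follows as the announced conjunction of Lemmas \ref{Untwig} and \ref{ThreeThree}, Corollaries \ref{TwoTwo}, \ref{TwoThree}, and \ref{Fork}, and Theorem \ref{SufficiencyCI}.
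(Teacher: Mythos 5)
Your proposal is correct and follows essentially the same route as the paper: the paper proves Theorem~\ref{NecessaryCI} precisely as the conjunction of Theorem~\ref{SufficiencyCI} (sufficiency) with Lemma~\ref{Untwig}, Corollaries~\ref{TwoTwo} and~\ref{TwoThree}, Lemma~\ref{ThreeThree}, and Corollary~\ref{Fork} (necessity), under its standing convention that sequences are reduced via Lemma~\ref{Reduced}. Where you go beyond the paper is in making the exhaustiveness of the case split explicit, and in particular in noticing that Corollary~\ref{Fork}, as stated and proved, treats only a non-leaf vertex with exactly two simple neighbors: its proof writes $\bigcirc{\bf u}=(0)fg$ with $|f|=1+|f_2|+|f_3|$, which presupposes that the vertex $1$ occurs in no term of ${\bf v}_2{\bf v}_3$, whereas a failure of 3.6.2 may attach three or more simple edges at that vertex. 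Your generalized fork argument closes this gap correctly: the original product fixes $0$ and fuses $1$ with all of $f_2,\ldots,f_{d+1}$ into a single long cycle, while the rearrangement yields $(0\,2)(1\,3\,4\cdots (d{+}1))\circ\bigcirc{\bf v}_2\circ\cdots\circ\bigcirc{\bf v}_{d+1}$, whose two new cycles each have length at least $2$; since all other cyclic components agree, one cycle type contains an extra $1$-cycle that the other lacks, so the two products are not conjugate. In this one respect your write-up is more complete than the paper's one-line assembly of the preceding results.
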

\vspace{1em}

\centerline{\sf Subproject Two: \ Proving for $n\ge4$ that, if {\bf u} is CI, then ${\cal T}({\bf u})$ has no circuits} \vspace{.5em} 

For $n\ge4$, our focus now is upon those sequences {\bf u} in $1^{n-2}2^1$ for which the transpositional multigraph ${\cal T}({\bf u})$ is connected on the vertex set $n$, but is not a multitree; instead, 
${\cal T}({\bf u})$  has at least one circuit subgraph. We will now provide, some convenient additional terminological background.\vspace{.5em}

Although we write a sequence usually between pointy brackets -- e.g., $\langle x_0,x_1,\ldots,x_{k-1}\rangle$ -- with its terms separated by commas, when ambiguity is not at issue, we may write it with (some 
or all of) its terms concatenated (i.e., without commas.) However, when $f$ and $g$ are permutations whose supports are distinct, we have been writing $f\circ g$ as $fg$ in order to indicate this disjointness. 
Context will make it clear whether an expression denotes disjoint permutations instead of concatenated sequences. 

When a sequence is of length one, we call its single term {\em primitive}.

A few specific sequences, to which we frequently refer, we will honor with the adjective {\em basic}. 

Thus far, all of the sequences we have treated in detail are {\em permutational} sequences; their terms either are permutations or are characters denoting sequences of permutations. Indeed, almost all of our 
permutational sequences are {\em transpositional}: Their terms are either transpositions or characters denoting sequences of transpositions. Non-basic permutational sequences get lower-case bold-face 
Latin-letter names.
 
For the present subproject, when $n\ge4$, we shall have recourse to two basic tranpositional sequences, $\sigma(n)$ and $\tau(n)$. But we shall use number (integer) sequences as well; our basic number sequence
is written $\nu(n)$. Number sequences other than $\nu(n)$ will usually receive lower-case Latin letter designations. 

\begin{defn}\label{Sequences} \  $\tau(n) := \langle(0\,1),(1\,2),\ldots,(n-2\,\,n-1)\rangle$ \ and \ $\sigma(n) := \langle\tau(n),(n-1\,\,0)\rangle$. Also,  \ $\nu(n) := \langle 0,1,\ldots,n-1\rangle$. \end{defn}

Of course ${\cal T}(\sigma(n))$ is a simple circuit multigraph on $n$ vertices, with $n\ge4$ understood, and ${\cal T}(\tau(n))$ is the simple branchless multitree resulting from the removal of the simple multiedge 
$(n-1\,\,0)$ from ${\cal T}(\sigma(n))$.\vspace{.5em}

Before we treat circuit-containing connected multigraphs with $n\ge4$, we recall that Theorem \ref{n=3} settles the case for $n\le3$. Now, for $n\ge4$, we show that, if the transpositional multigraph 
${\cal T}({\bf u})$ contains a 4-vertex simple subgraph which is a triangle sprouting a twig, then {\bf u} is not CI. Remember: \ $4 := \{0,1,2,3\}$.

\begin{thm}\label{Four} Let $n\ge4$, and let ${\bf u}$ be a sequence in $1^{n-2}2^1$ which has ${\bf h} := \langle(0\,1),(1\,2),(0\,2),(0\,3)\rangle$ as a subsequence\footnote{We write ${\bf u\setminus h}$ to designate the subsequence of ${\bf u}$ obtained by removing from ${\bf u}$ its subsequence ${\bf h}$.} Then {\bf u} is not {\rm CI}. \end{thm}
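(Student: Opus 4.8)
The plan is to produce two rearrangements of $\mathbf{u}$ whose products have different cycle types. Since $\simeq$ is an equivalence relation and $\bigcirc\mathbf{u}\in\mathrm{Prod}(\mathbf{u})$, two non-conjugate members of $\mathrm{Prod}(\mathbf{u})$ cannot both be conjugate to $\bigcirc\mathbf{u}$, so exhibiting such a pair refutes CI. Set $\mathbf{w}:=\mathbf{u}\setminus\mathbf{h}$, fix once and for all one arrangement $\mathbf{w}_0\in\mathrm{Seq}(\mathbf{w})$, and put $g:=\bigcirc\mathbf{w}_0$. For every arrangement $\mathbf{h}'$ of the four terms of $\mathbf{h}$ the concatenation $\langle\mathbf{h}',\mathbf{w}_0\rangle$ lies in $\mathrm{Seq}(\mathbf{u})$ with product $(\bigcirc\mathbf{h}')\circ g$; hence $\{P\circ g:P\in\mathrm{Prod}(\mathbf{h})\}\subseteq\mathrm{Prod}(\mathbf{u})$, and it suffices to show that these translates of $g$ are not all mutually conjugate.

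First I would record enough of $\mathrm{Prod}(\mathbf{h})$ by a finite check. Because vertex $3$ occurs only in the single term $(0\,3)$, the point $3$ lies in a nontrivial cycle of every $\bigcirc\mathbf{h}'$, so $\mathrm{Prod}(\mathbf{h})$ consists of even permutations of $\{0,1,2,3\}$ that move $3$; explicit orderings show that it contains the three double-transpositions $(0\,1)(2\,3),(0\,2)(1\,3),(0\,3)(1\,2)$ as well as $t\circ(0\,3)$ for each of the five transpositions $t\in\{(0\,1),(0\,2),(1\,2),(1\,3),(2\,3)\}$. Writing $h:=(0\,3)\circ g$, it follows that $t\circ h=(t\circ(0\,3))\circ g\in\mathrm{Prod}(\mathbf{u})$ for each such $t$, and likewise $(0\,1)(2\,3)\circ g\in\mathrm{Prod}(\mathbf{u})$. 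Note that these five transpositions form $K_4$ on $\{0,1,2,3\}$ with the edge $\{0,3\}$ removed, i.e.\ two triangles sharing the edge $\{1,2\}$.

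The core uses the standard fact that $t\circ h$ is obtained from $h$ by splitting one cycle (when the two points of $t$ share a cycle of $h$) or by merging two (otherwise), so its cycle count $c(t\circ h)$ equals $c(h)\pm1$, where $c(\cdot)$ counts cycles including fixed points. If $\mathbf{u}$ is CI then all five $t\circ h$ are conjugate, hence share a cycle count, so the predicate ``the two points of $t$ share a cycle of $h$'' is constant over the five $t$. It cannot be constantly true, for then $0,1,2,3$ would lie in one cycle of $h$ and the three edges at vertex $1$ would force $0,2,3$ to sit at a common circular distance from $1$ on that cycle -- impossible for three distinct points. Thus every such $t$ merges two distinct cycles of $h$; this places $1,2,3$ in pairwise distinct cycles and $0$ in a cycle disjoint from those of $1$ and $2$. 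Matching the merged cycle types across the five $t$ (each merge replaces two cycle lengths by their sum) then forces the cycles carrying $0,1,2,3$ to share a single length $m$; otherwise two of the $t\circ h$ already differ in type and we are finished.

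The remaining obstacle -- and the step I expect to be hardest -- is to break this forced symmetric configuration, in which the single-transposition products have been made to agree. Here I would invoke a double-transposition element, say $(0\,1)(2\,3)\circ g=(0\,1\,3\,2)\circ h$, and track its effect: when $0$ and $3$ lie in distinct $h$-cycles it splices the four length-$m$ cycles into one cycle of length $4m$, whereas when $0,3$ share an $h$-cycle it yields two cycles of lengths $m+m'$ and $m+m''$ with $m'+m''=m$. In either case its cycle type differs from the common type ($\{2m,m,m\}$, respectively $\{2m,m\}$) of the $t\circ h$, giving two non-conjugate elements of $\mathrm{Prod}(\mathbf{u})$ and so contradicting CI. The two delicate points for the full write-up are the equidistance impossibility that eliminates the all-splitting case and the exact cycle bookkeeping for $(0\,1\,3\,2)\circ h$ in the two equal-length branches.
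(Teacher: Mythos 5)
Your proposal is correct, and its core argument is genuinely different from the paper's. You share the same initial reduction: fix one arrangement of ${\bf u}\setminus{\bf h}$ with product $g$, so that $\{P\circ g: P\in{\rm Prod}({\bf h})\}\subseteq{\rm Prod}({\bf u})$, and then exhibit two such products of different cycle type (the paper phrases this via $w$, the product of the cycles of $g$ meeting $\{0,1,2,3\}$, which is the same bookkeeping as your restriction to cycles meeting $\{0,1,2,3\}$). From there, however, the paper runs a five-case analysis on how the cycles of $w$ intersect $\{0,1,2,3\}$ (four cycles each meeting it once; one cycle containing all of it; two cycles meeting it $2+2$; three cycles meeting it $2+1+1$; two cycles meeting it $3+1$), in each case writing down explicit rearrangements of ${\bf h}$ and deriving a multiset contradiction. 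You instead route everything through $h:=(0\,3)\circ g$ and the five transpositions $t$ of $K_4$ minus the edge $\{0,3\}$, all of whose translates $t\circ h=(t\circ(0\,3))\circ g$ lie in ${\rm Prod}({\bf u})$: the split/merge fact plus constancy of cycle count collapses the paper's five cases into two (all-split, killed by a pigeonhole on circular distances from vertex $1$; all-merge, where type-matching forces a common cycle length $m$), and then the single element $(0\,1)(2\,3)\circ g=(0\,1\,3\,2)\circ h$ breaks the tie ($\{4m\}$ versus $\{2m,m,m\}$, respectively $\{m+m',m+m''\}$ versus $\{2m,m\}$). This is shorter and more conceptual; what it defers is the finite verification that the needed elements of ${\rm Prod}({\bf h})$ exist, which is routine and is in fact supplied by the paper's own rearrangements ${\bf p}_4,{\bf p}_5,{\bf r}_1,\ldots,{\bf r}_6$. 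One phrasing to repair in the write-up: in the all-split case conjugacy does not force $0,2,3$ to sit at a \emph{common} circular distance from $1$; it forces each of the three distances $d(1,0),d(1,2),d(1,3)$ into the two-element set $\{d,L-d\}$, so that two of them coincide, which already identifies two of the three distinct points --- that is the contradiction you want.
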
 

\begin{proof} Let $W := \{c: c$ is a cyclic component of $\bigcirc({\bf u \setminus h})$ with $4\,\cap\,$supp$(c)\not=\emptyset\}$.  Let $w\in{\rm Sym}(n)$ be the permutation having $W$ as its set of cyclic components.  
It suffices to show that $\bigcirc{\bf p}\circ w\not\simeq\bigcirc{\bf h}\circ w$ for some ${\bf p}\in{\rm Seq}({\bf h})$. There are five cases to treat.\vspace{.5em}

{\sf Case 1:} \ $|4\,\cap\,$supp$(c)|=1$ for every $c\in W$. Then $W = \{(0\,{\rm s}_0),(1\,{\rm s}_1),(2\,{\rm s}_2),(3\,{\rm s}_3)\}$ for some sequences ${\rm s}_i$ in $\{4,5,\ldots,n-1\}$. Consider the following three 
rearrangements ${\bf p}_i\in$ Seq$({\bf h})$:\vspace{.3em} 

\centerline{${\bf p}_1 := \langle(0\,2),(0\,3),(1\,2),(0\,1)\rangle$ \ \ and \ \ ${\bf p}_2 := \langle(0\,2),(0\,1),(0\,3),(1\,2)\rangle$ \ \ and \ \ ${\bf p}_3 := \langle(0\,2),(0\,3),(0\,1),(1\,2)\rangle$}\vspace{.3em} 

\noindent Then $\bigcirc{\bf p}_1 = (0)(1\,2\,3)$ \ and \ $\bigcirc{\bf p}_2 = (0\,1\,3)(2)$, \ and \ $\bigcirc{\bf p}_3 = (0\,1)(2\,3)$. Consequently $\bigcirc{\bf p}_1\circ w = (1\,2\,3)\circ w = 
(1\,2\,3)\circ(0\,{\rm s}_0)(1\,{\rm s}_1)(2\,{\rm s}_2)(3\,{\rm s}_3) = (0\,\,{\rm s}_0)(1\,\,{\rm s}_2\,2\,\,{\rm s}_3\,3\,\,{\rm s}_1)$. Similarly, $\bigcirc{\bf p}_2\circ w = (0\,1\,3)\circ w = 
(0\,\,{\rm s}_1\,1\,\,{\rm s}_3\,3\,\,{\rm s}_0)(2\,\,{\rm s}_2)$ \ and \ $\bigcirc{\bf p}_3\circ w = (0\,1)(2\,3)\circ w = (0\,\,{\rm s}_1\,1\,\,{\rm s}_0)(2\,\,{\rm s}_3\,3\,\,{\rm s}_2)$. Summarizing, we have 
that\vspace{.3em}

\centerline{$\bigcirc{\bf p}_1\circ w = (0\,\,{\rm s}_0)(1\,\,{\rm s}_2 \, 2\,\,{\rm s}_3\,3\,\,{\rm s}_1)$ and $\bigcirc{\bf p}_2\circ w =(0\,\,{\rm s}_1\,1\,\,{\rm s}_3\,3\,\,{\rm s}_0)(2\,\,{\rm s}_2) $ and 
$\bigcirc{\bf p}_3\circ w = (0\,\,{\rm s}_1\,1\,\,{\rm s}_0)(2\,\,{\rm s}_3\,3\,\,{\rm s}_2)$.}\vspace{.3em} 

In order to establish that {\bf u} is not CI, it suffices to show that these three permutations $\bigcirc{\bf p}_i\circ w$ are not members of the same one conjugacy class. Observe that, for each $i\in\{1,2,3\}$, the 
permutation $\bigcirc{\bf p}_i\circ w$ has exactly two cyclic components, $a_i$ and $b_i$. To argue by contradiction, we assume the multiset equalities $\{|a_1|,|b_1|\}=\{|a_2|,|b_2|\}=\{|a_3|,|b_3|\}$. 
Spelled out, these multiset equalities are\vspace{.3em}

\centerline{$\{|(0\,\,{\rm s}_0)|,|(1\,\,{\rm s}_2\,2\,\,{\rm s}_3\,3\,\,{\rm s}_1)|\} = \{|(0\,\,{\rm s}_1\,1\,\,{\rm s}_3\,3\,\,{\rm s}_0)|,|(2\,\,{\rm s}_2)|\} = 
\{|(0\,\,{\rm s}_1\,1\,\,{\rm s}_0)|,|(2\,\,{\rm s}_3\,3\,\,{\rm s}_2)|\}$, \ whence }\vspace{.3em} 

\centerline{\{$1+|{\rm s}_0|,3+|{\rm s}_2|+|{\rm s}_3|+|{\rm s}_1|\} = \{3+|{\rm s}_1|+|{\rm s}_3|+|{\rm s}_0|,1+|{\rm s}_2|\} = \{2+|{\rm s}_1|+|{\rm s}_0|,2+|{\rm s}_3|+|{\rm s}_2|\}$.  }\vspace{.3em}

\noindent Since $1+|{\rm s}_0| < 3+|{\rm s}_1|+|{\rm s}_3|+|{\rm s}_0|$, the equality \{$1+|{\rm s}_0|,3+|{\rm s}_2|+|{\rm s}_3|+|{\rm s}_1|\} = \{3+|{\rm s}_1|+|{\rm s}_3|+|{\rm s}_0|,1+|{\rm s}_2|\}$ 
implies that $1+|{\rm s}_0| = 1+|{\rm s}_2|$; so $|{\rm s}_0|=|{\rm s}_2|$. Therefore, \{$1+|{\rm s}_0|,3+|{\rm s}_2|+|{\rm s}_3|+|{\rm s}_1|\} =  \{2+|{\rm s}_1|+|{\rm s}_0|,2+|{\rm s}_3|+|{\rm s}_2|\}$  
implies that $1+|{\rm s}_0| = 2+|{\rm s}_3|+|{\rm s}_2|$ since $1+|{\rm s}_0| < 2+|{\rm s}_1|+|{\rm s}_0|$. Hence, $1+|{\rm s}_0| = 2+|{\rm s}_3|+|{\rm s}_0|$, forcing us to the impossibility $|{\rm s}_3|=-1$. 
So the assumed three multiset equalities cannot hold simultaneously. Therefore $\bigcirc{\bf p}_i\circ w\not\simeq\bigcirc{\bf h}\circ w$ for at least one $i\in\{1,2,3\}$. We infer that {\bf u} is not CI in the Case 1 
situation.\vspace{.5em}

In the remaining four cases, $\psi$ denotes an arbitrary element in Sym$(4)$.\vspace{.5em}

{\sf Case 2:} \ $W = \{w\}$, and $4\subseteq$ supp$(w)$. That is, $w = \big(\psi(0)\,\,{\rm s}_{\psi(0)}\,\,\psi(1)\,\,{\rm s}_{\psi(1)}\,\,\psi(2)\,\,{\rm s}_{\psi(2)}\,\,\psi(3)\,\,{\rm s}_{\psi(3)}\big)\in n^1$, where the 
four s$_{\psi(i)}$ are number sequences, the family of whose nonempty term sets is a partition of the set $n\setminus 4 := \{4,5,\ldots,n-1\}$. Consider the subset $\{{\bf p}_4,{\bf p}_5\}\subseteq$ Seq$({\bf h})$ given by \vspace{.3em}

\centerline{${\bf p}_4 := \langle(0\,1),(0\,3),(0\,2),(1\,2)\rangle$ \  \ and \  \ ${\bf p}_5 := \langle(1\,2),(0\,1),(0\,3),(0\,2)\rangle$. }\vspace{.3em}

\noindent Then $\bigcirc{\bf p}_4 = (0\,2)(1\,3)$ and $\bigcirc{\bf p}_5 = (0\,1)(2\,3)$ and $\bigcirc{\bf h} = (0\,1)\circ(1\,2)\circ(0\,2)\circ(0\,3) = (0\,3)(1\,2)$. Observe that 
$\{\bigcirc{\bf p}_4,\bigcirc{\bf p}_5,\bigcirc{\bf h}\} = 2^2\subset$ Alt$(4)$. Hence there exists $\{{\bf p},{\bf q}\}\subseteq$ Seq$({\bf h})$ for which $\bigcirc{\bf p} = \big(\psi(0)\,\,\psi(1)\big)\big(\psi(2)\,\,\psi(3)\big)$ 
and for which $\bigcirc{\bf q} = \big(\psi(0)\,\,\psi(2)\big)\big(\psi(1)\,\,\psi(3)\big)$.  By straightforward computation we now obtain that 
\[\bigcirc{\bf p}\circ w = \big(\psi(0)\,\,{\rm s}_{\psi(1)}\,\,\psi(2)\,\,{\rm s}_{\psi(3)}\big)\big(\psi(1)\,\,{\rm s}_{\psi(0)}\big)\big(\psi(3)\,\,{\rm s}_{\psi(2)}\big)\, \not\simeq\,\]
\[ \big(\psi(0)\,\,{\rm s}_{\psi(2)}\,\,\psi(3)\,\,{\rm s}_{\psi(1)}\,\,\psi(2)\,\,{\rm s}_{\psi(0)}\,\,\psi(1)\,\,{\rm s}_{\psi(3)}\big) = \bigcirc{\bf q}\circ w.\] Thus we infer that {\bf u} fails to be CI in the Case 2 
situation.\vspace{.8em} 

{\sf Case 3:} \ $W = \{c_1,c_2\}$ where $|4\,\cap\,$supp$(c_i)|=2$ for each $i\in\{1,2\}$. So this time we can write $w = c_1c_2 = 
\big(\psi(0)\,\,{\rm s}_{\psi(0)}\,\,\psi(1)\,\,{\rm s}_{\psi(1)}\big)\big(\psi(2)\,\,{\rm s}_{\psi(2)}\,\,\psi(3)\,\,{\rm s}_{\psi(3)}\big)$. As in Case 2, here too we can provide $\{{\bf p,q}\}\subseteq$ Seq$({\bf h})$, for which 
$\bigcirc{\bf p} = \big(\psi(0)\,\,\psi(1)\big)\big(\psi(2)\,\,\psi(3)\big)$ and for which $\bigcirc{\bf q} = \big(\psi(0)\,\,\psi(2)\big)\big(\psi(1)\,\,\psi(3)\big)$.  We compute that 
\[\bigcirc{\bf p}\circ w = \big(\psi(0)\,\,{\rm s}_{\psi(1)}\big)\big(\psi(1)\,\,{\rm s}_{\psi(0)}\big)\big(\psi(2)\,\,{\rm s}_{\psi(3)}\big)\big(\psi(3)\,\,{\rm s}_{\psi(2)}\big)\,\not\simeq\] 
\[\big(\psi(0)\,\,{\rm s}_{\psi(2)}\,\,\psi(3)\,\,{\rm s}_{\psi(1)}\big)\big(\psi(1)\,\,{\rm s}_{\psi(3)}\,\,\psi(2)\,\,{\rm s}_{\psi(0)}\big) = \bigcirc{\bf q}\circ w.\]

\noindent Thus in the situation of Case 3 we again find that {\bf u} is not CI.\vspace{.5em}

{\sf Case 4:} \ $W = \{c_1,c_2,c_3\}$ with $c_1 := \big(\psi(0)\,\,{\rm s}_{\psi(0)}\,\,\psi(1)\,\,{\rm s}_{\psi(1)}\big)$ and $c_2 :=\big(\psi(2)\,\,{\rm s}_{\psi(2)}\big)$ and $c_3 = \big(\psi(3)\,\,{\rm s}_{\psi(3)}\big)$. 
So $w = c_1c_2c_3 = \big(\psi(0)\,\,{\rm s}_{\psi(0)}\,\,\psi(1)\,\,{\rm s}_{\psi(1)}\big)\big(\psi(2)\,\,{\rm s}_{\psi(2)}\big)\big(\psi(3)\,\,{\rm s}_{\psi(3)}\big)$.  Let {\bf p} and {\bf q} be as in Cases 2 and 3. Then 
\[\bigcirc{\bf p}\circ w = \big(\psi(0)\,\,{\rm s}_{\psi(1)}\big)\big(\psi(1)\,\,{\rm s}_{\psi(0)}\big)\big(\psi(2)\,\,{\rm s}_{\psi(3)}\,\,\psi(3)\,\,{\rm s}_{\psi(2)}\big)\,\not\simeq\]
\[\big(\psi(0)\,\,{\rm s}_{\psi(2)}\,\,\psi(2)\,\,{\rm s}_{\psi(0)}\,\,\psi(1)\,\,{\rm s}_{\psi(3)}\,\,\psi(3)\,\,{\rm s}_{\psi(1)}\big) = \bigcirc{\bf q}\circ w.\] Thus {\bf u} fails to be CI in the Case 4 situation as well. 
\vspace{.5em}

{\sf Case 5:} \ $W = \{c_1,c_2\}$ with $c_1 = \big(\psi(0)\,\,{\rm s}_{\psi(0)}\big)$ and $c_2 = \big(\psi(1)\,\,{\rm s}_{\psi(1)}\,\,\psi(2)\,\,{\rm s}_{\psi(2)}\,\,\psi(3)\,\,{\rm s}_{\psi(3)}\big)$. That is to say, 
$w =  \big(\psi(0)\,\,{\rm s}_{\psi(0)}\big) \big(\psi(1)\,\,{\rm s}_{\psi(1)}\,\,\psi(2)\,\,{\rm s}_{\psi(2)}\,\,\psi(3)\,\,{\rm s}_{\psi(3)}\big)$.  The $\bigcirc{\bf r}_i$ of the following  six ${\bf r}_i\in{\rm Seq}({\bf h})$ comprise the conjugacy class, $1^13^1 \subset{\rm Alt}(4)$, the six  possible $3$-cycles: 
\[{\bf r}_1 := \langle(0\,\,1),(1\,\,2),(0\,\,3),(0\,\,2)\rangle\quad\mbox{for which we compute that}\quad \bigcirc{\bf r}_1 = (0)(1\,\,3\,\,2)\] 
\[{\bf r}_2 := \langle(0\,\,2),(1\,\,2),(0\,\,3),(0\,\,1)\rangle\quad\mbox{for which we compute that}\quad \bigcirc{\bf r}_2 = (0)(1\,\,2\,\,3)\]
\[{\bf r}_3 := \langle(1\,\,2),(0\,\,3),(0\,\,2),(0\,\,1)\rangle\quad\mbox{for which we compute that}\quad \bigcirc{\bf r_3} = (0\,\,3\,\,2)(1)\]
\[{\bf r}_4 := \langle(0\,\,1),(0\,\,2),(1\,\,2),(0\,\,3)\rangle\quad\mbox{for which we compute that}\quad \bigcirc{\bf r}_4 = (0\,\,2\,\,3)(1)\]
\[{\bf r}_5 := \langle(1\,\,2),(0\,\,3),(0\,\,1),(0\,\,2)\rangle\quad\mbox{for which we compute that}\quad \bigcirc{\bf r}_5 = (0\,\,3\,\,1)(2)\]
\[{\bf r}_6 := \langle(1\,\,2),(0\,\,1),(0\,\,2),(0\,\,3)\rangle\quad\mbox{for which we compute that}\quad \bigcirc{\bf r}_6 = (0\,\,1\,\,3)(2)\]

\underline{Subcase}: \ $\psi(0) \not= 3$. Then $\bigcirc{\bf p} = \big(\psi(0)\big)\big(\psi(1)\,\,\psi(2)\,\,\psi(3)\big)$ and $\bigcirc{\bf q} = \big(\psi(0)\big)\big(\psi(1)\,\,\psi(3)\,\,\psi(2)\big)$
for some ${\{\bf p},{\bf q}\}\subseteq\{{\bf r}_i: 1\le i\le6\}$. 

We compute that $\bigcirc{\bf p}\circ w = 
\big(\psi(0)\big)\big(\psi(1)\,\,\psi(2)\,\,\psi(3)\big)\circ\big(\psi(0)\,\,{\rm s}_{\psi(0)}\big) \big(\psi(1)\,\,{\rm s}_{\psi(1)}\,\,\psi(2)\,\,{\rm s}_{\psi(2)}\,\,\psi(3)\,\,{\rm s}_{\psi(3)}\big) = 
\big(\psi(0)\,\,{\rm s}_{\psi(0)}\big)\big(\psi(1)\,\,{\rm s}_{\psi(2)}\,\,\psi(3)\,\,{\rm s}_{\psi(1)}\,\,\psi(2)\,\,{\rm s}_{\psi(3)}\big) \not\simeq 
\big(\psi(0)\,\,{\rm s}_{\psi(0)}\big)\big(\psi(1)\,\,{\rm s}_{\psi(3)}\big)\big(\psi(2)\,\,{\rm s}_{\psi(1)}\big)\big(\psi(3)\,\,{\rm s}_{\psi(2)}\big) = \bigcirc{\bf q}\circ w$. Thus we see that {\bf u} is not CI 
in the situation of Case 5 where also $\psi(0) \not= 3$. \vspace{.5em} 

\underline{Subcase}: \ $\psi(0) = 3$. There are two subsubcases, which are: 

\ \ \ \ \ \underline{i}: \ \ $w := (0\,\,{\rm s}_0\,\,1\,\,{\rm s}_1\,\,2\,\,{\rm s}_2)(3\,\,{\rm s}_3)$.  

\ \ \ \ \ \underline{ii}: \ $w := (0\,\,{\rm s}_0\,\,2\,\,{\rm s}_2\,\,1\,\,{\rm s}_1)(3\,\,{\rm s}_3)$.  

\noindent We will show that the theorem holds for Subsubcase \underline{i}, but omit the similar proof for Subsubcase \underline{ii}.\vspace{.3em} 

We use rearrangements ${\bf v}_1 := {\bf h}$ and ${\bf v}_2 := \langle(0\,1),(0\,3),(0\,2),(1\,2)\rangle$ and ${\bf v}_3 := \langle(0\,2),(0\,3),(1\,2),(0\,1)\rangle$ of {\bf h}, noting first that 
$\bigcirc{\bf v}_1 = (0\,3)(1\,2)$, that $\bigcirc{\bf v}_2 = (0\,2)(1\,3)$, and that $\bigcirc{\bf v}_3 = (0)(1\,2\,3)$, and hence that 
$\bigcirc{\bf v}_1\circ w = (0\,\,{\rm s}_3\,\,3\,\,{\rm s}_0\,\,1\,\,{\rm s}_2)(2\,\,{\rm s}_1)$ and $\bigcirc{\bf v}_2\circ w = (0\,\,{\rm s}_2)(1\,\,{\rm s}_3\,\,3\,\,{\rm s}_1\,\,2\,\,{\rm s}_0)$ and 
$\bigcirc{\bf v}_3\circ w = (0\,\,{\rm s}_0\,\,1\,\,{\rm s}_2)(2\,\,{\rm s}_3\,\,3\,\,{\rm s}_1)$. 

Assume that $\bigcirc{\bf v}_1\circ w \simeq \bigcirc{\bf v}_2\circ w \simeq \bigcirc{\bf v}_3\circ w$. Then the following three multiset equalites must hold:  
\[ \{3+|{\rm s}_3|+|{\rm s}_0|+|{\rm s}_2|,1+|{\rm s}_1|\} = \{1+|{\rm s}_2|,3+|{\rm s}_3|+|{\rm s}_1|+|{\rm s}_0|\}  = \{2+|{\rm s}_0|+|{\rm s}_2|,2+|{\rm s}_3|+|{\rm s}_1|\}\]

\noindent Since $1+|{\rm s}_2| < 3+|{\rm s}_3|+|{\rm s}_0|+|{\rm s}_2|$, the equality of the first two multisets implies that $1+|{\rm s}_1| = 1+|{\rm s}_2|$, whence $|{\rm s}_1| = |{\rm s}_2|$. 
Since $1+|{\rm s}_1| < 2+|{\rm s}_3|+|{\rm s}_1|$, the equality of the first and third multisets therefore implies that $1+|{\rm s}_1| = 2+|{\rm s}_0|+|{\rm s}_2| = 2+|{\rm s}_0|+|{\rm s}_1|$, 
whence $0 = 1+|{\rm s}_0|$, which entails the impossibility $|{\rm s}_0| = -1$.

So {\bf u} fails to be CI in Case 5 as well. Since the five Cases are exhaustive, the theorem is proved. \end{proof}

Theorem \ref{Four} gives us that, if $n\ge4$ and if ${\cal T}({\bf u})$ is a transpositional multigraph containing a triangular subgraph, then {\bf u} is not CI. The remainder of \S3 is devoted mainly to 
generalizing the proof of Theorem \ref{Four} in order to establish, for $n\ge4$, that no connected transpositional multigraph on $n$ vertices is CI if it contains  a circuit subgraph on more than three vertices. 
To this purpose it is useful to describe those sequences {\bf g} in $1^{n-2}2^1$ for which ${\cal T}({\bf g})$ is itself a circuit. The following three lemmas do so.

Recall the basic transpositional sequence $\tau(n) := \langle(0\,1),(1\,2),\ldots,(n-3\,\,n-2),(n-2\,\,n-1)\rangle$; that ${\rm s^R}$ is the backward spelling of the sequence \ s: \ and that, when \ a \ is a subsequence 
of a sequence \ b, \ then \ b $\setminus$ a \ denotes the subsequence of \ b \ that is complementary to \ a, \ as per Footnote 7. 

When ${\bf s}$ is a sequence, we write $x<_{\bf s} y$ to indicate that $x$ precedes $y$ as a term in ${\bf s}$. 

\begin{lem}\label{Numbers1} For $n\ge3$, let {\bf g} be any rearrangement of $\tau(n)$. Then $\bigcirc{\bf g} = (0\,\,{\rm p}\,\,n-1\,\,{\rm q}) \in n^1$ for some subsequence {\rm p} of 
$\langle1,2,\ldots,n-2\rangle$ and with ${\rm q} := (\langle 1,2,\ldots,n-2\rangle\setminus{\rm p})^{\rm R}$. \end{lem}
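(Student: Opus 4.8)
The plan is to reduce the statement to a purely local claim about the cyclic order of $\bigcirc\mathbf{g}$. Since ${\cal T}(\tau(n))$ is a simple path and hence a tree, the results of \cite{Denes} and \cite{Silberger} quoted in the introduction already guarantee that $\bigcirc\mathbf{g}$ is a cyclic permutation of $n$, i.e. $\bigcirc\mathbf{g}\in n^1$; so it remains only to pin down the cyclic arrangement. I will index the edges of the path by $e_j := (j-1\,\,j)$ for $1\le j\le n-1$, and I let $\rho(j)\in\{0,1,\ldots,n-2\}$ denote the position of $e_j$ in the rearrangement $\mathbf{g}$, so that $\rho$ is a permutation of the position set. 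The target form $(0\,\,\mathrm{p}\,\,n-1\,\,\mathrm{q})$ says exactly that the $n$-cycle is \emph{unimodal}: reading it forward from $0$, the values strictly increase up to $n-1$ and then strictly decrease back to $0$.

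The heart of the argument is to compute, for each internal vertex $j\in\{1,\ldots,n-2\}$, whether its two cycle-neighbors are larger or smaller than $j$. These neighbors are the successor $j\bigcirc\mathbf{g}$ and the predecessor $j(\bigcirc\mathbf{g})^-=j\bigcirc(\mathbf{g}^{\rm R})$, where I use $\bigcirc(\mathbf{g}^{\rm R})=(\bigcirc\mathbf{g})^-$ from the computation in the proof of Proposition \ref{Inverse}. To find $j\bigcirc\mathbf{g}$ I trace the $\mathbf{g}$-path from $j$: scanning the terms of $\mathbf{g}$ from left to right, the point sits at $j$ until the first of the two incident edges $e_j,e_{j+1}$ is met. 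If $\rho(j)<\rho(j+1)$, then $e_j=(j-1\,\,j)$ acts first and the point drops to $j-1$; the key observation is that $e_j$ is the \emph{unique} edge joining the block $\{0,\ldots,j-1\}$ to the block $\{j,\ldots,n-1\}$, so, since each edge occurs only once in $\mathbf{g}$, once the scan has passed $e_j$ the point can never recross to a value $\ge j$, forcing $j\bigcirc\mathbf{g}<j$. Symmetrically, if $\rho(j+1)<\rho(j)$, then $e_{j+1}$ acts first, the point rises to $j+1$, and the cut edge $e_{j+1}$ confines it to $\{j+1,\ldots,n-1\}$, giving $j\bigcirc\mathbf{g}>j$.

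Applying this very computation to the rearrangement $\mathbf{g}^{\rm R}$, whose position function is $(n-2)-\rho$, yields the predecessor rule: $j\bigcirc(\mathbf{g}^{\rm R})<j$ exactly when $\rho(j)>\rho(j+1)$. Comparing the two rules shows that in \emph{either} case ($\rho(j)<\rho(j+1)$ or $\rho(j)>\rho(j+1)$) the vertex $j$ has one cycle-neighbor smaller than $j$ and one larger; hence no internal vertex is a local extremum of the cyclic sequence of values. Since a cyclic sequence of distinct integers always has at least one local maximum and one local minimum, the only possible extrema are the global minimum $0$ (necessarily a valley) and the global maximum $n-1$ (necessarily a peak). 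Therefore the cycle is unimodal, and reading off the ascending run as $\mathrm{p}$ (an increasing list, hence a subsequence of $\langle1,\ldots,n-2\rangle$) and the descending run as $\mathrm{q}$ gives $\bigcirc\mathbf{g}=(0\,\,\mathrm{p}\,\,n-1\,\,\mathrm{q})$ with $\mathrm{q}=(\langle1,\ldots,n-2\rangle\setminus\mathrm{p})^{\rm R}$, as claimed.

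I expect the main obstacle to be making the tracing argument rigorous, specifically the ``cut edge'' confinement claim that after the scan passes $e_j$ (respectively $e_{j+1}$) the traced point stays permanently on one side of the path. This follows from the facts that each edge of the path appears exactly once in $\mathbf{g}$ and that $e_j$ is the sole link between the two halves, but it deserves a careful statement; everything else is bookkeeping. Note that the argument is direct and needs no induction, with a one-line check of $n=3$ (both rearrangements of $\tau(3)$) available as a sanity anchor.
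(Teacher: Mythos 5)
Your proof is correct, but it takes a genuinely different route from the paper's. The paper argues by induction on $n$: for ${\bf g}$ a rearrangement of $\tau(k+1)$ it deletes the term $(k-1\,\,k)$, notes that this transposition commutes with every remaining term except $(k-2\,\,k-1)$, so that $\bigcirc{\bf g}$ equals either $(k-1\,\,k)\circ\bigcirc{\bf g'}$ or $\bigcirc{\bf g'}\circ(k-1\,\,k)$ according to the relative order of those two terms in ${\bf g}$, and then composes the inductive form $(0\,\,{\rm p'}\,\,k-1\,\,{\rm q'})$ of $\bigcirc{\bf g'}$ with $(k-1\,\,k)$, which makes $k$ the new maximum and places $k-1$ at the end of ${\rm p}$ or at the head of ${\rm q}$ according to the case. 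You instead argue directly and locally: the cut-edge tracing gives the descent rule that, for an internal vertex $j$, the successor $j\bigcirc{\bf g}$ is smaller than $j$ exactly when $e_j <_{\bf g} e_{j+1}$; reversal (together with the equality $\bigcirc({\bf g}^{\rm R})=(\bigcirc{\bf g})^-$ from the computation in Proposition \ref{Inverse}) gives the mirror rule for predecessors; and unimodality follows because no internal vertex can then be a cyclic local extremum. The trade-offs: the paper's induction is shorter and self-contained --- it never needs the D\'{e}nes--Silberger theorem that tree rearrangements multiply to $n$-cycles, which you invoke to get $\bigcirc{\bf g}\in n^1$ --- and its edge-peeling device is reused in the proof of Lemma \ref{Numbers2}; your analysis, on the other hand, extracts strictly more information, namely the explicit combinatorial description that $j$ lies in ${\rm p}$ precisely when $e_{j+1}<_{\bf g} e_j$, which the induction does not directly expose. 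Note also that running your tracing at the end vertices $0$ and $n-1$ would let you drop the citation entirely: the rules show $\bigcirc{\bf g}$ has no fixed points and that the maximum of any cycle of $\bigcirc{\bf g}$ has both cyclic neighbors below it, hence cannot be internal, so every cycle's maximum is $n-1$ and there is only one cycle. The gap you flag (rigorizing the confinement step) is real but routine: it is a one-line induction along the scan using precisely the two facts you name, that each edge occurs exactly once in ${\bf g}$ and that $e_j$ is the unique edge joining $\{0,\ldots,j-1\}$ to $\{j,\ldots,n-1\}$.
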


\begin{proof}  We induce on $n$. Note that $\langle(0\,1),(1\,2)\rangle$ and $\langle(1\,2),(0\,1)\rangle$ are the only rearrangements of $\tau(3)$, that $(0\,1)\circ(1\,2)=(0\,2\,1) = (0\,{\rm p}\,2\,{\rm q})$ 
with \ p \ the empty sequence and reverse-complementary to \ q $=\langle 1\rangle$ \ in the number sequence $\langle 1\rangle$, and similarly that $(1\,0)\circ(0\,1) = (0\,1\,2) = (0\,{\rm p}\, 2\,{\rm q})$ 
where \ p $=\langle1\rangle$ and \ q $=\emptyset$.

Choose an integer $k\ge3$. Suppose the lemma holds for $n=k$. Let \ {\bf g} \ be a rearrangement of $\tau(k+1)$. Let ${\bf g'} := {\bf g}\setminus\langle(k-1\,\,k)\rangle$. Now,  
supp$(t)\,\cap\,$supp$  \big((k-1\,\,k)\big)=\emptyset$ for every term $t$ in ${\bf g'}$ except for $t=(k-2\,\,k-1)$. Hence, one of the following two equalities must hold:
\[{\sf 1.}\qquad\qquad \bigcirc{\bf g} = (k-1\,\,k)\circ\bigcirc{\bf g'}\] \[{\sf 2.}\qquad\qquad \bigcirc{\bf g} = \bigcirc{\bf g'}\circ(k-1\,\,k)\] Equality {\sf 1} holds when $(k-2\,\,k-1)>_{\bf g}(k-1\,\,k)$. Equality {\sf 2} 
holds when  $(k-2\,\,k-1)<_{\bf g}(k-1\,\,k)$.

Since ${\bf g'}$ is a rearrangement of $\tau(k)$, we have by the inductive hypothesis that $\bigcirc{\bf g'} = (0\,\,{\rm p'}\,\,k-1\,\,{\rm q'})$ for some subsequence ${\rm p'}$ of $\tau(k)$, where 
${\rm  q'} = \big(\langle 1,2,\ldots,k-2\rangle\setminus{\rm p'}\big)^{\rm R}$. So, if $(k-2\,\,k-1)>_{\bf g}(k-1\,\,k)$, then \[\bigcirc{\bf g} = (k-1\,\,k)\circ\bigcirc{\bf g'} = 
(k-1\,\,k)\circ(0\,\,{\rm p'}\,\,k-1\,\,{\rm q'}) = (0\,\,{\rm p}\,\,k\,\,{\rm q})\quad\mbox{where}\quad {\rm p} := \langle {\rm p'},\,k-1\rangle\quad\mbox{and}\quad {\rm q := q'}.  \]

Similarly, in the event that $(k-2\,\,k-1)<_{\bf g}(k-1\,\,k)$, we have instead that \[\bigcirc{\bf g} = \bigcirc{\bf g'}\circ(k-1\,\,k) = (0\,\,{\rm p'}\,\,k-1\,\,{\rm q'})\circ(k-1\,\,k) = 
(0\,\,{\rm p}\,\,k\,\,{\rm q})\quad\mbox{where}\quad {\rm p:=p'}\quad\mbox{and}\quad {\rm q} := \langle k-1,{\rm q'}\rangle. \]
\noindent These equalities are exactly what the lemma claims. \end{proof}
\vspace{.5em}

Recall our basic sequence $\sigma(n) := \langle\tau(n),(n-1\,\,0)\rangle = \langle(0\,1),(1\,2),\ldots,(n-2\,\,n-1),(n-1\,\,0)\rangle$ for $n\ge3$.

\begin{lem} \label{Numbers2}Let ${\bf f}\in{\rm Seq}(\sigma(n))$ with $n\ge3$. Then $\bigcirc{\bf f} = ({\rm h})(\nu(n)\setminus {\rm h})^-$ for a subsequence ${\rm h}\not=\emptyset$  of $\nu(n)$.  
\end{lem}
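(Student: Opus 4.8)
The plan is to split ${\bf f}$ at its unique occurrence of the closing edge $(n-1\,\,0)$ and feed the remainder to Lemma \ref{Numbers1}. Writing ${\bf f}=\langle{\bf a},(n-1\,\,0),{\bf b}\rangle$, the complementary subsequence ${\bf a}{\bf b}$ contains every edge of $\sigma(n)$ except $(n-1\,\,0)$, hence is a rearrangement of $\tau(n)$; so by Lemma \ref{Numbers1} we have $\bigcirc{\bf a}\circ\bigcirc{\bf b}=C:=(0\,\,{\rm p}\,\,n-1\,\,{\rm q})$, an $n$-cycle with ${\rm p}$ an increasing and ${\rm q}$ a decreasing enumeration of complementary subsets of $\langle 1,\ldots,n-2\rangle$. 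Conjugating the middle transposition across $\bigcirc{\bf b}$ gives the clean factorization $\bigcirc{\bf f}=\bigcirc{\bf a}\circ\bigcirc{\bf b}\circ\big(\bigcirc{\bf b}^-\circ(n-1\,\,0)\circ\bigcirc{\bf b}\big)=C\circ t$, where $t=\big((n-1)\bigcirc{\bf b}\,\,\,0\bigcirc{\bf b}\big)$ is a transposition whose two moved points both lie on the $n$-cycle $C$. Since post-multiplying an $n$-cycle by a transposition of two of its points splits it into two disjoint cycles whose supports partition $n$, this already forces $\bigcirc{\bf f}$ into the gross shape of the claimed answer: a product of (at most) two disjoint cycles covering $n$.

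To see that the exact form $({\rm h})(\nu(n)\setminus{\rm h})^-$ emerges, I would first record the anchor computation where $(n-1\,\,0)$ is the last term, so ${\bf b}=\emptyset$ and $t=(n-1\,\,0)$. Then $C\circ(n-1\,\,0)$ alters $C$ only at the preimages of $0$ and $n-1$, namely the last entries of ${\rm q}$ and of ${\rm p}$, rerouting them so that the single cycle breaks into $(0\,\,{\rm p})$ and $(n-1\,\,{\rm q})$. Because ${\rm p}$ is increasing, $(0\,\,{\rm p})$ is the cycle $({\rm h})$ on the increasing subsequence ${\rm h}:=\langle 0,{\rm p}\rangle$ of $\nu(n)$; because ${\rm q}$ is decreasing, $(n-1\,\,{\rm q})$ is the inverse of the increasing cycle on the complementary index set, i.e. $(\nu(n)\setminus{\rm h})^-$. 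Thus $\bigcirc{\bf f}=({\rm h})(\nu(n)\setminus{\rm h})^-$ exactly, and this computation also displays precisely how ${\rm h}$ is read off from $C$; the symmetric calculation disposes of the case where $(n-1\,\,0)$ is the first term.

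The remaining, and genuinely hardest, part is the interior position of $(n-1\,\,0)$, where $t=\big((n-1)\bigcirc{\bf b}\,\,\,0\bigcirc{\bf b}\big)$ cuts $C$ at points that depend on ${\bf b}$ rather than at $0$ and $n-1$; the monotone two-cycle form must then be shown to survive regardless of where the cut lands. I would handle this by induction on $n$ (base $n=3$ by direct enumeration), excising the vertex $n-1$ by coalescing its only two incident edges, $(n-2\,\,n-1)$ and $(n-1\,\,0)$, into the single closing edge $(n-2\,\,0)$; this yields a rearrangement of $\sigma(n-1)$ to which the inductive hypothesis applies, giving two cycles monotone in index order, after which one reinserts $n-1$ between $n-2$ and $0$. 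The main obstacle is exactly this reinsertion: the terms of ${\bf f}$ lying \emph{between} the two $n-1$-edges obstruct a naive coalescence, so the argument must branch on the relative order of $(n-2\,\,n-1)$ and $(n-1\,\,0)$ and on whether $0$ and $n-2$ land in the same inductive cycle, verifying in each branch that splicing $n-1$ back preserves monotonicity of both cycles. What keeps this finite and tractable is that the only transpositions failing to commute with $(n-1\,\,0)$ are the unique terms $(0\,\,1)$ and $(n-2\,\,n-1)$, since in the path $\tau(n)$ the vertices $0$ and $n-1$ each have degree one.
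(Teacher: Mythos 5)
Your first two paragraphs are correct: under the paper's left-to-right convention the factorization $\bigcirc{\bf f}=C\circ t$ with $t=\big((n-1)\bigcirc{\bf b}\;\;0\bigcirc{\bf b}\big)$ is valid, and the two end-position computations (${\bf b}=\emptyset$, ${\bf a}=\emptyset$) do yield $({\rm h})(\nu(n)\setminus{\rm h})^-$ exactly. But those are the degenerate instances; the whole content of Lemma \ref{Numbers2} is the interior case, and there your text is a plan whose every load-bearing step is asserted rather than proved. Three things are missing. First, your coalescence presupposes that $(n-2\,\,n-1)$ and $(n-1\,\,0)$ can be brought adjacent inside ${\bf f}$ by commutations; the fact you cite (only $(0\,1)$ and $(n-2\,\,n-1)$ fail to commute with $(n-1\,\,0)$) does not give this, because the blockers $(0\,1)$ and $(n-3\,\,n-2)$ have blockers of their own, and one must chase the dependence all the way around the cycle $\sigma(n)$ -- that chase is, in disguise, exactly the chain argument of the paper that your plan was meant to bypass. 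Second, even once the two edges are adjacent, their product is the $3$-cycle $(n-2\,\,0\,\,n-1)$, not the transposition $(n-2\,\,0)$; the correct bookkeeping gives $\bigcirc{\bf f}=(y\,\,n-1)\circ\bigcirc{\bf f'}$, where ${\bf f'}\in{\rm Seq}(\sigma(n-1))$ and $y$ is the preimage of $0$ under the product of the prefix of ${\bf f}$, so a stray transposition at an uncontrolled point survives the excision. Third -- and this is the crux -- the inductive form survives the reinsertion of $n-1$ only if that point $y$ happens to be the largest term of ${\rm h'}$ or the smallest term of $\nu(n-1)\setminus{\rm h'}$, so that $n-1$ splices into the one admissible slot; your sentence ``verifying in each branch that splicing $n-1$ back preserves monotonicity of both cycles'' is precisely this claim, and no argument for it is offered. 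The main case therefore stands unproved: this is a genuine gap, not mere tedium.

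For contrast, the paper needs no induction on $n$ at this point. In the case $(0\,1)<_{\bf f}(n-1\,\,0)$ it takes the least $m$ with $(n-1\,\,0)<_{\bf f}(n-1\,\,n-2)<_{\bf f}\cdots<_{\bf f}(m+1\,\,m)$, shows that each block of ${\bf f}$ interleaved within this chain has support disjoint from the accumulated chain product (here is where the position of $(0\,1)$ and the minimality of $m$ are used), and thereby slides the entire descending chain to the right end of ${\bf f}$. What remains on the left is a rearrangement of $\tau(m+1)$, to which Lemma \ref{Numbers1} applies, and the single product $(0\,\,{\rm p}\,\,m\,\,{\rm q})\circ(n-1\,\,0)\circ(n-1\,\,n-2)\circ\cdots\circ(m+1\,\,m)$ is then computed outright to be $({\rm h})(\nu(n)\setminus{\rm h})^-$. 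In effect the paper reduces every interior position of $(n-1\,\,0)$ to a chain-augmented version of your anchor computation; supplying that reduction is exactly what your proposal still lacks.
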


\begin{proof}  \underline{Case 1}: \ $(0\,1)<_{\bf f}(n-1\,\,0)$.  Let $m$ be the smallest integer such that 
\[(n-0\,\,0) <_{\bf f} (n-1\,\,n-2) <_{\bf f} (n-2\,\,n-3) <_{\bf f}\cdots <_{\bf f} (m+1\,\,m). \quad\mbox{We can decompose}\quad {\bf f}\quad\mbox{as follows:} \]
\[{\bf f} = \langle {\bf b}_0,(n-1\,\,0), {\bf b}_1,(n-1\,\,n-2), {\bf b}_2, (n-2\,\,n-3),\ldots, {\bf b}_{n-m-1}, (m+1\,\,m), {\bf b}_{n-m}\rangle.\] 
\[\mbox{Then}\quad \bigcirc{\bf f} = \bigcirc{\bf b}_0\circ(n-1\,\,0)\circ\bigcirc{\bf b}_1\circ (n-1\,\,n-2)\circ\bigcirc{\bf b}_2\circ\cdots\circ\bigcirc{\bf b}_{n-m-1}\circ(m+1\,\,m)\circ\bigcirc{\bf b}_{n-m}.\] 
Since ${\bf f}\in{\rm Seq}(\sigma(n))$, there are exactly two terms in ${\bf f}$ whose supports contain $n-1$; those two terms are $(n-1\,\,0)$ and $(n-1\,\,n-2)$. Since those terms border the transpositional 
sequence ${\bf b}_1$, they do not occur as terms in ${\bf b}_1$. Consequently $n-1\not\in{\rm supp}(\bigcirc{\bf b}_1)$. By hypothesis $(0\,1) <_{\bf f} (n-1\,\,0)$, and hence $(0\,1) <_{\bf f} {\bf b}_1$.  Thus 
neither of the two terms of ${\bf f}$ which have $0$ in their supports are terms in ${\bf b}_1$. Therefore ${\rm supp}(\bigcirc{\bf b}_1)\cap{\rm supp}\big((n-1\,\,0)\big) = \emptyset$. So 
$(n-1\,\,0)\circ\bigcirc{\bf b}_1 = \bigcirc{\bf b}_1\circ(n-1\,\,0)$. Thus we infer that 
\[\bigcirc{\bf f} = \bigcirc{\bf b}_0\circ\bigcirc{\bf b}_1\circ (n-1\,\,0)\circ(n-2\,\,n-1)\circ \bigcirc{\bf b}_2\circ\cdots\circ\bigcirc{\bf b}_{n-m-1}\circ(m+1\,\,m)\circ\bigcirc{\bf b}_{n-m}.\] Similarly we see 
that ${\rm supp}(\bigcirc{\bf b}_2)\cap{\rm supp}\big((n-1\,\,0)\circ(n-1\,\,n-2)\big) = \emptyset$, and thus that 
\[\bigcirc{\bf f} = \bigcirc{\bf b}_0\circ\bigcirc{\bf b}_1\circ\bigcirc{\bf b}_2\circ(n-1\,\,0)\circ(n-1\,\,n-2)\circ(n-2\,\,n-3)\circ\cdots\circ\bigcirc{\bf b}_{n-m-1}\circ(m+1\,\,m)\circ\bigcirc{\bf b}_{n-m}.\] 
Continuing in this fashion, we eventually obtain that 
\[\bigcirc{\bf f} = \bigcirc{\bf b}_0\circ\bigcirc{\bf b}_1\circ\cdots\circ\bigcirc{\bf b}_{n-m}\circ(n-1\,\,0)\circ(n-1\,\,n-2)\circ(n-2\,\,n-3)\circ\cdots\circ(m+1\,\,m).\]

Define ${\bf g} := \langle{\bf b}_0,{\bf b}_1,\ldots,{\bf b}_{n-m-1},{\bf b}_{n-m}\rangle$. Note that ${\bf g} = {\bf f}\setminus\langle(n-1\,\,0),(n-1\,\,n-2),\ldots,(m+1\,\,m)\rangle$. Since 
${\bf f}\in{\rm Seq}(\sigma(n))$, we see that ${\bf g}\in{\rm Seq}(\tau(m+1))$, recalling that $\tau(m+1) = \langle(0\,1),(1\,2),\ldots(m-1\,\,m) \rangle$. So by Lemma \ref{Numbers1}, we have that 
$\bigcirc{\bf g} = (0\,\,{\rm p}\,\,m\,\,{\rm q})$, where ${\rm p}$ is a subsequence of $\langle 1,2,\ldots,m-1\rangle$ and where ${\rm q} = (\langle 1,2,\ldots,m-1\rangle\setminus{\rm p})^{\rm R}$. 
Thus $\bigcirc{\bf f} = \bigcirc{\bf g}\circ(n-1\,\,0)\circ(n-1\,\,n-2)\circ(n-2\,\,n-3)\circ\cdots\circ(m+1\,\,m) = (0\,\,{\rm p}\,\,m\,\,{\rm q})\circ(n-1\,\,0)\circ(n-1\,\,n-2)\circ\cdots\circ(m+1\,\,m) = 
(0\,\,{\rm p}\,\,m+1\,\,m+2\,\,\ldots\,n-2\,\,n-1)({\rm q}\,\,m)$. Setting ${\rm h} := \langle 0\,\,{\rm p}\,\,m+1\,\,m+2\,\,\ldots\,n-2\,\,n-1\rangle$, we see that ${\rm h}$ is a nonempty subsequence of 
$\nu(n)$ and observe that $\langle{\rm q},m\rangle = (\nu(n)\setminus{\rm h})^{\rm R}$, and that therefore $({\rm q}\,\,m) = (\nu(n)\setminus{\rm h})^-$. So 
$\bigcirc{\bf f} = ({\rm h})(\nu(n)\setminus{\rm h})^-$ as alleged.\vspace{.5em}

\underline{Case 2}: \ $(0\,1)>_{\bf f}(n-1\,\,0)$. Since the argument parallels that for Case 1, we omit it.      \end{proof}  

\begin{lem}\label{Numbers3}  Let $n\ge3$. Let ${\rm h}$ be a proper nonempty subsequence of $\nu(n)$. Then $\bigcirc{\bf f}=({\rm h})(\nu(n)\setminus{\rm h})^-$ for some 
${\bf f}\in{\rm Seq}(\sigma(n))$ .   \end{lem}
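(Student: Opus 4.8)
The plan is to prove that the assignment ${\bf f}\mapsto\bigcirc{\bf f}$, which by Lemma \ref{Numbers2} carries ${\rm Seq}(\sigma(n))$ into the set of permutations of the form $({\rm h})(\nu(n)\setminus{\rm h})^-$, is \emph{onto} the subset where ${\rm h}$ is a proper nonempty subsequence. Writing $H$ for the set of terms of ${\rm h}$ and $H^c$ for its complement in $n$, I would first reduce to the case $0\in H$. Indeed, by Proposition \ref{Inverse} the reverse ${\bf f}^{\rm R}$ satisfies $\bigcirc({\bf f}^{\rm R})=(\bigcirc{\bf f})^-$, and a direct check shows $[({\rm h})(\nu(n)\setminus{\rm h})^-]^- = ({\rm h}')(\nu(n)\setminus{\rm h}')^-$ where ${\rm h}':=\nu(n)\setminus{\rm h}$ (using that disjoint cycles commute and that $\nu(n)\setminus{\rm h}'={\rm h}$). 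Since $0\in H$ if and only if $0\notin H'$, once the case $0\in H$ is handled, the case $0\notin H$ follows by realizing the inverse permutation (whose distinguished cycle then contains $0$) by some ${\bf f}'$ and taking ${\bf f}:={\bf f}'^{\rm R}$.

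The engine of the $0\in H$ case is a converse to Lemma \ref{Numbers1}: for every $n\ge2$ and every subsequence ${\rm p}$ of $\langle1,2,\ldots,n-2\rangle$ with ${\rm q}:=(\langle1,\ldots,n-2\rangle\setminus{\rm p})^{\rm R}$, some ${\bf g}\in{\rm Seq}(\tau(n))$ satisfies $\bigcirc{\bf g}=(0\,\,{\rm p}\,\,n-1\,\,{\rm q})$. I would prove this by reading the recursion in the proof of Lemma \ref{Numbers1} backwards, inducting on $n$. The integer $n-2$ is the largest member of $\{1,\ldots,n-2\}$, so it is either the last term of ${\rm p}$ or the first term of ${\rm q}$. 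In the first case write ${\rm p}=\langle{\rm p}',n-2\rangle$, apply the inductive hypothesis to $\tau(n-1)$ to obtain ${\bf g}'$ with $\bigcirc{\bf g}'=(0\,\,{\rm p}'\,\,n-2\,\,{\rm q})$, and set ${\bf g}:=\langle(n-2\,\,n-1),{\bf g}'\rangle$, so $\bigcirc{\bf g}=(n-2\,\,n-1)\circ\bigcirc{\bf g}'$ is the claimed cycle; in the second case write ${\rm q}=\langle n-2,{\rm q}'\rangle$ and instead append, setting ${\bf g}:=\langle{\bf g}',(n-2\,\,n-1)\rangle$. The base case $n=2$ is immediate.

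For the main case $0\in H$, I set $m:=\max H^c$, which exists and satisfies $1\le m\le n-1$ because ${\rm h}$ is proper and nonempty; thus $\{m+1,\ldots,n-1\}\subseteq H$. Splitting $\{1,\ldots,m-1\}$, let ${\rm p}$ list $H\cap\{1,\ldots,m-1\}$ in increasing order and put ${\rm q}:=(H^c\cap\{1,\ldots,m-1\})^{\rm R}$. The converse of Lemma \ref{Numbers1}, applied to $\tau(m+1)$, furnishes ${\bf g}\in{\rm Seq}(\tau(m+1))$ with $\bigcirc{\bf g}=(0\,\,{\rm p}\,\,m\,\,{\rm q})$. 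I then define ${\bf f}:=\langle{\bf g},(n-1\,\,0),(n-1\,\,n-2),(n-2\,\,n-3),\ldots,(m+1\,\,m)\rangle$, which uses each edge of $\sigma(n)$ exactly once, since the edges of $\tau(m+1)$ occur in ${\bf g}$ and the appended run is exactly the complementary set of edges; hence ${\bf f}\in{\rm Seq}(\sigma(n))$. The commutation computation already carried out in the proof of Lemma \ref{Numbers2} depends only on the value $\bigcirc{\bf g}=(0\,\,{\rm p}\,\,m\,\,{\rm q})$, and it yields $\bigcirc{\bf f}=\bigcirc{\bf g}\circ(n-1\,\,0)\circ(n-1\,\,n-2)\circ\cdots\circ(m+1\,\,m)=(0\,\,{\rm p}\,\,m+1\,\,\ldots\,\,n-1)({\rm q}\,\,m)$, which is precisely $({\rm h})(\nu(n)\setminus{\rm h})^-$.

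The hard part will be the converse to Lemma \ref{Numbers1}, together with the bookkeeping of degenerate configurations: when ${\rm p}$ or ${\rm q}$ is empty; when $m=1$, so $\tau(m+1)=\langle(0\,1)\rangle$ and $\bigcirc{\bf g}=(0\,1)$; and when $m=n-1$, so the appended run is the single edge $(n-1\,\,0)$ and the formula degenerates to $\bigcirc{\bf f}=(0\,\,{\rm p})(n-1\,\,{\rm q})$. In each of these I would verify that the cycle formula still records $({\rm h})(\nu(n)\setminus{\rm h})^-$ correctly. The remaining steps --- that the appended run exhausts exactly the edges of $\sigma(n)$ outside $\tau(m+1)$, and that reversal disposes of the case $0\notin H$ via Proposition \ref{Inverse} and the inverse identity noted above --- are routine.
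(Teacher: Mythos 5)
Your proposal is correct, and it reaches Lemma \ref{Numbers3} by a genuinely different route than the paper does. The paper argues by induction on $n$ over the circuit $\sigma(n)$ itself: it uses the same reversal trick (Proposition \ref{Inverse}) that you use, but to normalize so that the \emph{largest} element of $n$ is a term of ${\rm h}$ rather than so that $0$ is; it disposes of $|{\rm h}|=1$ by cyclically rotating $\sigma(n)$; and for longer ${\rm h}$ it applies the inductive hypothesis to ${\rm h}$ with its largest term deleted, obtaining ${\bf g}\in{\rm Seq}(\sigma(n-1))$, and then splices the new vertex into the product by replacing the single term $(n-2\,\,0)$ of ${\bf g}$ with the block $\langle(n-1\,\,0),(n-2\,\,n-1)\rangle$. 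You instead prove a surjectivity converse of Lemma \ref{Numbers1} for the path $\tau(n)$ (your induction --- prepend $(n-2\,\,n-1)$ when $n-2$ ends ${\rm p}$, append it when $n-2$ begins ${\rm q}$ --- is correct), and then assemble ${\bf f}=\langle{\bf g},(n-1\,\,0),(n-1\,\,n-2),\ldots,(m+1\,\,m)\rangle$ in one step, citing the product identity already established inside the proof of Lemma \ref{Numbers2}; that identity is purely formal in the value $\bigcirc{\bf g}=(0\,\,{\rm p}\,\,m\,\,{\rm q})$, so the citation is legitimate, and your edge count does show ${\bf f}\in{\rm Seq}(\sigma(n))$. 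In effect you run the proof of Lemma \ref{Numbers2} backwards, exhibiting Lemma \ref{Numbers3} as its exact surjectivity counterpart. This buys you something real: your construction depends only on the \emph{value} of $\bigcirc{\bf g}$, never on the order of its terms, whereas the paper's splice is order-sensitive --- its claimed effect (that only the images of $n-2$ and $n-1$ change) holds only when $(0\,\,1)$ precedes and $(n-3\,\,n-2)$ follows $(n-2\,\,0)$ in ${\bf g}$, which the bare existence statement of the inductive hypothesis does not guarantee. For instance, with $n=5$ and ${\rm h}=\langle0,1,4\rangle$, the inductive hypothesis may return ${\bf g}=\langle(3\,\,0),(0\,\,1),(2\,\,3),(1\,\,2)\rangle$, whose product is $(0\,\,1)(3\,\,2)$ as required, yet the splice then produces $(0\,\,1)(2\,\,3\,\,4)$ rather than the required $(0\,\,1\,\,4)(3\,\,2)$; so your order-robust construction actually repairs a delicate point rather than merely restyling it. The price of your route is the auxiliary lemma on $\tau(n)$, and the degenerate configurations you flag (${\rm p}$ or ${\rm q}$ empty, $m=1$, $m=n-1$) all check out as you expect.
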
 

\begin{proof} If the lemma holds for ${\rm h}$ then it holds also for its complement $\nu(n)\setminus{\rm h}$ in $\nu(n)$. For, if $\bigcirc{\bf f} = ({\rm h})(\nu(n)\setminus{\rm h})^-$, 
then $\bigcirc{\bf f}^{\rm R} = (\bigcirc{\bf f})^- = (\nu(n)\setminus{\rm h})({\rm h})^- = (\nu(n)\setminus{\rm h})(\nu(n)\setminus(\nu(n)\setminus{\rm h}))^-$. We induce on $n\ge3$.\vspace{.5em} 

{\sf Basis Step.} There are the six nonempty proper subsequences of $\nu(3)$; they are \[{\rm u}:=\langle0\rangle,\quad {\rm v}:=\langle1\rangle,\quad {\rm w}:=\langle2\rangle, 
\quad {\rm x}:=\langle0,1\rangle,\quad {\rm y}:=\langle0,2\rangle,\quad {\rm z}:=\langle1,2\rangle. \] We use the fact noted in the preceding paragraph. If ${\bf f}_{\rm u} := 
\langle(0\,1),(1\,2),(2\,0)\rangle$ then $\bigcirc{\bf f}_{\rm u} = (0)(2\,1) = ({\rm u})(\nu(3)\setminus{\rm u})^- = ({\rm z})(\nu(3)\setminus{\rm z})^-$. If ${\bf f}_{\rm v} :=
 \langle(0\,1),(2\,0),(1\,2)\rangle$ then $\bigcirc{\bf f}_{\rm v} = (0\,2)(1) = ({\rm v})(\nu(3)\setminus{\rm v})^- = ({\rm y})(\nu(3)\setminus{\rm y})^-$. If ${\bf f}_{\rm w} := 
\langle(1\,2),(0\,1),(2\,0)\rangle$ then $\bigcirc{\bf f}_{\rm w} = (0\,1)(2) = ({\rm w})(\nu(3)\setminus{\rm w})^- = ({\rm x})(\nu(3)\setminus{\rm x})^-$.\vspace{.5em} 
T
he arbitrary length-one sequence $\langle x\rangle$ in $\nu(k+1)$ is a special case. Choose the transpositional sequence ${\bf f}\in{\rm Seq}(\sigma(k+1))$  to be 
\[{\bf f} := \langle(x\,\,x+1),(x+1\,\,x+2),\ldots,(k-1\,\,k),(k\,0),(0\,1),(1\,2),\ldots,(x-2\,\,x-1),(x-1\,\,x)\rangle.\] Then\footnote{modulo $k+1$ of course}  
$\bigcirc{\bf f} = (x)(k\,\,k-1\,\,k-2\,\ldots\,x+1\,\,x-1\,\,x-2\,\dots\,2\,\,1\,\,0) = (x)(\nu(k+1)\setminus\langle x\rangle)^-$, as desired.\vspace{.5em}

Let ${\rm h} := \langle x_1,x_2,\cdots,x_s\rangle$ be a subsequence of $\nu(k+1)$, and let ${\rm h'} := \nu(k+1)\setminus{\rm h} = \langle y_1,y_2,\ldots,y_t\rangle$ be  the complement in $\nu(k+1)$ of ${\rm h}$.
 Of course $s+t=k+1$. 

By the first paragraph in this proof, we can take it both that $x_s=k$. and also that there exist $s$ disjoint subsequences\footnote{some of which may be vacuous} 
${\rm a}_i$ of $\nu(k+1) = \langle{\rm a}_1,x_1,{\rm a}_2,x_2,\ldots,{\rm a}_s,x_s\rangle$. Indeed, ${\rm h'} = {\rm a}_1{\rm a}_2\ldots{\rm a}_s$, where ${\rm h'}$ is expressed here as the 
concatenation of the subsequences ${\rm a}_i$. The ${\bf f}\in{\rm Seq}(\sigma(k+1))$, whose existence this lemma alleges, must satisfy $\bigcirc{\bf f} = 
(x_1\,x_2\,\ldots\,x_s)(y_{k+1-s}\,y_{k-s}\,\ldots\,y_2\,y_1) = ({\rm h})({\rm a_s^Ra_{s-1}^R\cdots a_2^Ra_1^R})$.\vspace{.5em}

Since we have already dealt with the length-one case ${\rm h} = \langle x\rangle$, we now take it that  $2\le|{\rm h}|= s \le k-1$. Recall that $x_s=k$ is the right-most term in the subsequence ${\rm h}$. Let 
${\rm h''} := {\rm h}\setminus\langle x_s\rangle = \langle x_1,x_2,\ldots,x_{s-1}\rangle$. Since therefore $y_t<k$, we have that ${\rm h'} = \nu(k)\setminus{\rm h''} = \langle y_1,y_2,\ldots,y_t\rangle$ is the 
complement\footnote{as well as remaining the complement in $\nu(k+1)$ of ${\rm h}$} in $\nu(k)$ of the sequence ${\rm h''}$. Hence, by the inductive hypothesis, there exists 
${\bf g} \in {\rm Seq}(\sigma(k))$ for which $\bigcirc{\bf g} = ({\rm h'')(h'})^- = (x_1\,\ldots\,x_{s-1})(y_t\,\,y_{t-1}\,\ldots\,y_2\,\, y_1)$. 

We create ${\bf f}\in{\rm Seq}(\sigma(k+1))$ from ${\bf g}\in{\rm Seq}(\sigma(k))$ by replacing the term $(k-1\,\,0)$ of ${\bf g}$ with the sequence $\langle(k\,\,0),(k-1\,\,k)\rangle$. Notice that, whereas 
$(k-1)\bigcirc{\bf g}=0$, we have instead $(k-1)\bigcirc{\bf f} = k = x_s$ and $(k)\bigcirc{\bf f} = 0$. But for all $z\in (k+1)\setminus\{k-1,k\}$ we have $(z)\bigcirc{\bf g} = (z)\bigcirc{\bf f}$. Obviously 
${\rm h} = \langle{\rm h''},x_s\rangle$, and $\bigcirc{\bf f} = ({\rm h})({\rm h'})^-$. \end{proof} 

\begin{thm}\label{NixCircuits} Let ${\cal T}({\bf f})$ have a circuit, where ${\bf f}$ is a sequence in $1^{k-2}2^1$ with $k\ge4$. Then ${\bf f}$ is not {\rm CI}. \end{thm}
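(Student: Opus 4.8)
The plan is to isolate a circuit of ${\cal T}({\bf f})$ and then split into two regimes according to its length. Let $C$ be a circuit of ${\cal T}({\bf f})$, with vertex set $V_C$ and $m:=|V_C|\ge3$. If ${\cal T}({\bf f})$ contains a triangle, I would dispatch the matter immediately through Theorem \ref{Four}: since ${\cal T}({\bf f})$ is connected on its $k\ge4$ vertices, some vertex outside the triangle is joined to a triangle vertex, so after relabelling the four transpositions $(0\,1),(1\,2),(0\,2),(0\,3)$ all occur as terms of ${\bf f}$, and Theorem \ref{Four} yields that ${\bf f}$ is not CI. Hence I may assume ${\cal T}({\bf f})$ is triangle-free, in which case a shortest circuit has length $m\ge4$. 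This is exactly the regime for which the three preparatory lemmas were built, and where the triangle alone would not suffice (for $m=3$ the circuit product is always a single transposition, which is why Theorem \ref{Four} had to fold in the extra twig).

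For $m\ge4$ I would reuse the device of Theorem \ref{Four}. Let {\bf c} be the length-$m$ subsequence of {\bf f} whose terms are the edges of $C$, let ${\bf d}:={\bf f}\setminus{\bf c}$, and put $w:=\bigcirc{\bf d}$. Every $\langle{\bf p},{\bf d}\rangle$ with ${\bf p}\in{\rm Seq}({\bf c})$ belongs to ${\rm Seq}({\bf f})$ and has product $\bigcirc{\bf p}\circ w$, so it suffices to exhibit ${\bf p}_1,{\bf p}_2\in{\rm Seq}({\bf c})$ with $\bigcirc{\bf p}_1\circ w\not\simeq\bigcirc{\bf p}_2\circ w$. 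After relabelling $V_C$ in circuit order, {\bf c} is a rearrangement of $\sigma(m)$; so Lemmas \ref{Numbers2} and \ref{Numbers3} tell me precisely which factors $\bigcirc{\bf p}$ are available: for every nonempty proper subsequence ${\rm h}$ of the circuit order I can realize $\bigcirc{\bf p}=({\rm h})(V_C\setminus{\rm h})^-$. In particular I may prescribe a split of $V_C$ into two cycles of any sizes $(a,m-a)$, and within each size I am free to choose which vertices occupy the short cycle; the two splits I care about are $(1,m-1)$ and $(2,m-2)$.

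The substance is to control the cycle type of $\bigcirc{\bf p}\circ w$ as $\bigcirc{\bf p}$ varies. Outside vertices are moved by $\bigcirc{\bf p}\circ w$ exactly as by $w$, so the only cycles that change are those meeting $V_C$; these are governed by the first-return permutation $R_w$ of $w$ on $V_C$ (a fixed permutation of $V_C$) together with the fixed lengths of the $w$-strings joining consecutive visits to $V_C$, the induced return map of $\bigcirc{\bf p}\circ w$ on $V_C$ being $\bigcirc{\bf p}\circ R_w$. Because $m\ge4$ gives $\bigcirc{\bf p}$ genuinely many cycle types, I can alter the cycle structure of $\bigcirc{\bf p}\circ R_w$---for instance by toggling whether a chosen circuit vertex is a fixed point of $\bigcirc{\bf p}$, via the singleton choice ${\rm h}=\langle v\rangle$---and thereby alter the cycle type of the composite. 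I would organize the verification exactly as in Theorem \ref{Four}, by the partition that $w$ induces on $V_C$: how many cyclic components of $w$ meet $V_C$, and in how many points each meets it. In each configuration I would name two admissible subsequences ${\rm h}$ (realizable by Lemma \ref{Numbers3}), compute the two composites as products of explicit cycles whose lengths are sums of the string lengths, and read off that their cycle-length multisets differ---typically forcing an impossible equation such as $|{\rm s}_i|=-1$ among string lengths, precisely as in Case~1 and Case~5 of Theorem \ref{Four}.

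The main obstacle is exactly this last step: the cycle type of $\bigcirc{\bf p}\circ w$ is not determined by $\bigcirc{\bf p}$ alone but by how the non-circuit edges of ${\cal T}({\bf f})$ thread the circuit vertices, so no single uniform computation is available and one must enumerate the ways $w$ can meet $V_C$. The extra freedom supplied by $m\ge4$---many cycle types of $\bigcirc{\bf p}$, together with free choice of which vertices fall into the short cycle---is what makes every configuration resolvable, and checking that in each configuration two non-conjugate composites can be named, and are always realizable through Lemma \ref{Numbers3}, is the crux of the argument.
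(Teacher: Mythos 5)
Your setup coincides with the paper's: dispatch triangles via Theorem \ref{Four} (your connectivity remark correctly supplies the pendant edge $(0\,3)$ that Theorem \ref{Four} needs), then for a triangle-free graph take a circuit of length $m\ge4$, split ${\bf f}$ into the circuit subsequence ${\bf c}$ and ${\bf d}:={\bf f}\setminus{\bf c}$, set $w:=\bigcirc{\bf d}$, and reduce to exhibiting ${\bf p}_1,{\bf p}_2\in{\rm Seq}({\bf c})$ with $\bigcirc{\bf p}_1\circ w\not\simeq\bigcirc{\bf p}_2\circ w$, with Lemma \ref{Numbers3} supplying exactly the available factors $({\rm h})(\nu(m)\setminus{\rm h})^-$. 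Up to this point you are on the paper's track. But the theorem is not proved at that point: everything that remains --- showing that in \emph{every} configuration of how the cycles of $w$ meet the circuit vertices one can actually name two rearrangements whose composites have different cycle-length multisets --- is precisely what you defer (``I would organize the verification\ldots'', ``the crux of the argument''). That verification is the body of the paper's proof: Case One (every relevant cycle of $w$ meets the circuit in exactly one point), Case Two (a single cycle of $w$ contains all circuit vertices), Case Three (some cycle meets the circuit in more than one but fewer than all of its vertices), supported by three auxiliary Claims about orbits. A plan that says ``in each configuration compute and read off a contradiction'' restates what must be proved rather than proving it.

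Moreover, the specific shortcuts you propose would not survive the hardest configuration. In the paper's Case Two ($w$'s relevant part is one long cycle through all of $V_C$, interleaved with arbitrary strings), toggling a fixed point via ${\rm h}=\langle v\rangle$, or comparing the splits $(1,m-1)$ and $(2,m-2)$, does not visibly yield non-conjugate composites: the cycle structure of $\bigcirc{\bf p}\circ w$ depends on the cyclic order in which the long cycle visits the circuit vertices, and the paper must choose its rearrangement \emph{adaptively} --- the ${\bf v}'_i$ / Order-1-versus-Order-2 construction --- to force one composite into a single cycle (two cycles when $m$ is odd), and then invoke its Claim 1 to manufacture a second rearrangement whose composite has at least three cycles. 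Likewise Case Three requires Claims 2 and 3 (locating two distinct orbits joined by the cycle $(\nu(n)\setminus{\rm h})^-$) before a suitable pair of subsequences ${\rm h}$, ${\rm d}$ can even be named. Your first-return-map formalization ($R_w$ and $\bigcirc{\bf p}\circ R_w$) is a clean way to phrase the bookkeeping, and could well streamline a rewrite, but the existence of the required pair of rearrangements in every configuration is exactly the content of the theorem, and it is missing from your argument.
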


\begin{proof} Theorem \ref{Four} establishes this theorem where ${\cal T}({\bf f})$ contains a triangular subgraph. So for $k\ge n\ge4$, let ${\cal T}(\sigma(n))$ a subgraph of ${\cal T}(k)$. Then some 
${\bf g}\in{\rm Seq}(\sigma(n))$ is a subsequence of ${\bf f}$. Let ${\sf D}$ be the family of cyclic components of the permutation $\bigcirc({\bf f\setminus g})$, let ${\sf U} := \{C: n\,\cap\,{\rm supp}(C) = 
\emptyset\}$, and let ${\sf W :=  D\setminus U}$. Let ${\sf u}$ be the permutation whose family of components is ${\sf U}$, and let ${\sf w}$ be the permutation whose family of components is ${\bf W}$. Then 
$\bigcirc({\bf f\setminus g}) = {\sf uw}$. 

The theorem will be proved when we exhibit rearrangements $\{{\bf p},{\bf q}\}\subseteq{\rm Seq}({\bf g})$ such that $\bigcirc\langle{\bf p,f\setminus g}\rangle = \bigcirc{\bf p}\circ{\sf uw}\not\simeq
\bigcirc{\bf q}\circ{\sf uw} = \bigcirc\langle{\bf q,f\setminus g}\rangle$. Moreover, since ${\rm supp}({\sf u})\,\cap\,{\rm supp}(\bigcirc{\bf p}\circ{\sf w}) = \emptyset = 
{\rm supp}({\sf u})\,\cap\,{\rm supp}(\bigcirc{\bf q}\circ{\sf w})$, it will suffice to insist only that $\bigcirc{\bf p}\circ{\sf w} \not\simeq \bigcirc{\bf q}\circ{\sf w}$. There are three cases. \vspace{.7em}

{\sf Case One:} \ $|n\,\cap\,{\rm supp}(C)| = 1$ for each cycle $C\in{\bf W}$.  

We write ${\sf w} = (0\,\,{\rm s}_0)(1\,\,{\rm s}_1)\ldots(n-1\,\,{\rm s}_{n-1})$, where the ${\rm s}_i$ are finite sequences\footnote{which are not required to be nonempty} in $k\setminus n$. By Lemma 
\ref{Numbers3}, for each $i\in n$ there exists ${\bf p}^{(i)}\in{\rm Seq}({\bf g})$ such that $\bigcirc{\bf p}^{(i)} = (i)(0\,\,1\,\,2\ldots\,\,i-2\,\,i-1\,\,i+1\,\,i+2\ldots n-2\,\, n-1)$. Hence 
$\bigcirc{\bf p}^{(i)}\circ{\sf w} = 
(i\,\,{\rm s}_i)(0\,\,{\rm s}_1\,\,1\,\,{\rm s}_2\,\,2\,\,s_3\ldots\,{\rm s}_{i-2}\,\,i-2\,\,{\rm s}_{i-1}\,\,i-1\,\,{\rm s}_{i+1}\,\,i+1\,\,{\rm s}_{i+2}\,\, i+2\ldots n-2\,\,{\rm s}_{n-1}\,\,n-1\,\,{\rm s}_0)$ 
for each $i\in n$. Now pretend that $\bigcirc{\bf p}^{(i)}\circ{\sf w} \simeq \bigcirc{\bf p}^{(0)}\circ{\sf w}$ for all $i\in n$.  Then all $n$ of the cycle-length multisets $K[{\bf p}^{(i)}]$ of these 
permutations  $\bigcirc{\bf p}^{(i)}\circ{\sf w}$ must be identical. Specifying the $K[{\bf p}^{(i)}]$ for each $i\in n$, we see that \[K[{\bf p}^{(i)}] := \{1+|{\rm s}_i|,n-1+\sum\{|{\rm s}_j|: i\not=j\in n\}.\]
Obviously $1+|{\rm s}_i| < n-1+\sum\{|{\rm s}_j|: t\not= j\in n\}$ whenever $i\not= t$. So our assumption that all of the $K[{\bf p}^{(i)}]$ are identical implies that $|{\rm s}_i| = |{\rm s}_0|$ for all $i\in n$.

Again invoking Lemma \ref{Numbers3}, we can find ${\bf q}\in{\rm Seq}({\bf g})$ for which $\bigcirc{\bf q} = (1\,\,0)(2\,\,3\ldots n-2\,\,n-1)$. Then $\bigcirc{\bf q}\circ{\sf w} = 
(0\,\,{\rm s}_1\,\,1\,\,{\rm s}_0)(2\,\,{\rm s}_3\,\,4\,\,{\rm s}_5\ldots n-2\,\,{\rm s}_{n-1}\,\,n-1\,\,{\rm s}_2)$, and so $K[{\bf q}] = \{2+|{\rm s}_1|+|{\rm s}_0|,n-2+\sum_{j=2}^{n-1}|{\rm s}_j|\}$.
Under the assumption that ${\bf f}$ is CI, we must have that $\bigcirc{\bf p}^{(i)}\circ{\sf w} = \bigcirc{\bf q}\circ{\sf w}$ for all $i\in n$, whereupon $K[{\bf p}^{(i)}] = K[{\bf q}]$. Since all of the integers 
$|{\rm s}_i|$ were found to be equal, for $i=0$ we must infer that  
$\{1+|{\rm s}_0|,(n-1)+(n-1)\cdot|{\rm s}_0|\} = \{2+2\cdot|{\rm s}_0|,n-2+(n-2)\cdot|{\rm s}_0|\}$, an impossibility since $\{1,n-1\}\cap\{2,n-2\} = \emptyset$ when $n\ge 4$. So the Theorem holds 
in the Case-One situation.\vspace{.5em} 

The next Case requires an ancillary fact.

\begin{claim}\label{1stClaim} If $n\ge4$ and if ${\bf W}$ contains a cycle $C$ with $|n\,\cap\,{\rm supp}(C)|\ge2$ then there exists ${\bf p}\in{\rm Seq}(\sigma(n))$ such that the permutation 
$(\bigcirc{\bf p}\circ{\sf w})\restrict(n\,\cup\,{\rm supp}({\sf w}))$ has at least three cycles. \end{claim}

\noindent{\it Proof of Claim.} We first suppose that there exists $C := (x\,\,{\rm s}_x\,\,y\,\,{\rm s}_y)\in{\bf W}$ with $n\cap{\rm supp}(C) = \{x,y\}$.  Without loss of generality, we take it that 
$\langle x,y\rangle$ is a subsequence of $\nu(n)$, and we invoke Lemma \ref{Numbers3} to find some ${\bf p}\in{\rm Seq}(\sigma(n))$ for which $\bigcirc{\bf p} = (x\,\,y)(\nu(n)\setminus\langle x,y\rangle)^-$.  

Let ${\sf w'}$ be the permutation whose family of cyclic components is ${\bf W}\setminus\{C\}$. Since two permutations commute if their supports are disjoint,\footnote{We may write ${\sf a\circ b}$ as ${\sf ab}$ 
in order to emphasize that ${\rm supp}({\sf a})\cap{\rm supp}({\sf b}) = \emptyset$.} we compute:  $\bigcirc{\bf p}\circ{\sf w} = \bigcirc{\bf p}\circ{\sf w'}C = 
(x\,\,y)(\nu(n)\setminus\langle x,y\rangle)^-\circ{\sf w'}C = (\nu(n)\setminus\langle x,y\rangle)^-\circ{\sf w'} (x\,\,y)\circ C = 
[(\nu(n)\setminus\langle x,y\rangle)^-\circ{\sf w'}] [(x\,\,y)\circ(x\,\,{\rm s}_x\,\,y\,\,{\rm s}_y)] = [(\nu(n)\setminus\langle x,y\rangle)^-\circ{\sf w'}]\,(x\,\,{\rm s}_y)(y\,\,{\rm s}_x)$. It is thus clear that 
here the permutation $(\bigcirc{\bf p}\circ{\sf w})\restrict(n\cup{\rm supp}({\sf w}))$ has at least three cyclic components. 

More generally, now, suppose there exists $C := (x\,\,{\rm s}_x\,y\,\,{\rm s}_y\,z\,\,{\rm s}_z) \in {\bf W}$ where ${\rm s}_x{\rm s}_y{\rm s}_z$ is an injective sequence in the set $k\setminus n$. Let 
${\rm q} = \langle m_1,m_2,m_3\rangle$ be a rearrangement of the number sequence $\langle x,y,z\rangle$ for which $m_1<m_2<m_3$. Surely either $({\rm q}) = (z\,\,y\,\,x)$ or $({\rm q})^- = (z\,\,y\,\,x)$.

For $({\rm q}) := (z\,\,y\,\,x)$, by  Lemma \ref{Numbers3} there exists ${\bf p}\in{\rm Seq}(\sigma(n))$ with $\bigcirc{\bf p} = ({\rm q})(\nu(n)\setminus{\rm q})^-$. Again, let ${\sf w'}$ be the permutation 
whose family of components is ${\bf W}\setminus\{C\}$. Then $\bigcirc{\bf p}\circ{\sf w} = \bigcirc{\bf p}\circ{\sf w'}C = ({\rm q})({\rm q'})^-{\sf w'}C$, where ${\rm q'} := \nu(n)\setminus{\rm q}$. Thus 
$\bigcirc{\bf p}\circ{\sf w} = [({\sf q'})\circ{\sf w'}][(z\,\,y\,\,x)\circ C] = [({\sf q'})\circ{\sf w'}][(z\,\,y\,\,x)\circ(x\,\,{\rm s}_x\,y\,\,{\rm s}_y\,z\,\,{\rm s}_z)] = 
[({\sf q'})\circ{\sf w'}](x\,{\rm s}_z)(y\,{\rm s}_x)(z\,{\rm s}_y)$ for three number sequences ${\rm s}_t$. So $\bigcirc{\bf p}\circ{\sf w}$ has at least three cycles. 
Thus the claim holds for $({\rm q}) := (z\,\,y\,\,x)$. On the other hand, if $({\rm q})^- := (z\,\,y\,\,x)$, then Lemma \ref{Numbers3} provides a ${\bf p}_1\in{\rm Seq}(\sigma(n))$ for which 
$\bigcirc{\bf p}_1 = (\nu(n)\setminus{\rm q})({\rm q})^-$, and we omit the repetitive rest of the argument. Claim 1 follows.\vspace{.7em}

{\sf Case Two:} \ The family ${\bf W}$ of cycles contains exactly one element $C$, and $n\subseteq{\rm supp}(C)$.

Pick an  integer $i$ with $0\le i, i+1, i+2 < k$. The cycle $C$ is expressable in one of these two ways:

\ \ \ \ \ \underline{Order 1}. \  \ $C = (i\,\,\,{\rm s}_i\,\,\,i+1\,\,\,{\rm s}_{i+1}\,\,\,i+2\,\,\,{\rm s}_{i+2})$

\ \ \ \ \ \underline{Order 2}. \  \ $C = (i\,\,\,{\rm s}_i\,\,\,i+2\,\,\,{\rm s}_{i+2}\,\,\,i+1\,\,\,{\rm s}_{i+1})$

\noindent For the subsequence ${\bf a}_i := \langle(i+1\,\,i+2),(i\,\,i+1)\rangle$ of $\sigma(n)^{\rm R}\in{\rm Seq}(\sigma(n))$, if Order 1 prevails then \[\bigcirc{\bf a}_i\circ C =
(i\,\,i+1\,\,i+2)\circ(i\,\,\,{\rm s}_i\,\,\,i+1\,\,\,{\rm s}_{i+1}\,\,\,i+2\,\,\,{\rm s}_{i+2}) = (i\,\,\,{\rm s}_{i+1}\,\,\,i+2\,\,\,{\rm s}_i\,\,\,i+1\,\,\,{\rm s}_{i+2}).\] Thus, if $C$ is of the form in Order 1, then 
$\bigcirc{\bf a}_i\circ C$ is a single cycle of the same length as that of $C$. But if, instead, Order 2 prevails, then 
\[\bigcirc{\bf a}_i^{\rm R}\circ C = (i\,\,i+2\,\,i+1)\circ(i\,\,\,{\rm s}_i\,\,\,i+2\,\,\,{\rm s}_{i+2}\,\,\,i+1\,\,\,{\rm s}_{i+1}) = (i\,\,\,{\rm s}_{i+2}\,\,\,i+1\,\,\,{\rm s}_i\,\,\,i+2\,\,\,{\rm s}_{i+1}). \] 
So here too, when $C$ is of the form Order 2, then $\bigcirc{\bf a}_i^{\rm R}\circ C$ is a single cycle whose length is $|C|$.\vspace{.5em}

{\sf Subcase}: $n$ is even. Then the transpositional sequence $\sigma(n)$ has an even number of terms. So we may write $\sigma(n)$ as a sequence ${\bf v}_1{\bf v}_2\ldots{\bf v}_t$ of $t := n/2$ pairs 
${\bf v}_i:= \langle(2i-2\,\,\,2i-1),(2i-1\,\,\,2i)\rangle$ of transpositions that are adjacent and consecutive in $\sigma(n)$. Define ${\bf v}'_t := {\bf v}_t^{\rm R}$ if $\langle 2t-2,2t-1,0\rangle = \langle n-2,n-1,0\rangle$ 
occurs in Order 1 in $C$. For each $i\in[t-1]$ we write ${\bf v}'_i$ as ${\bf v}_i^{\rm R}$ if $\langle 2i-2,2i-1,2i\rangle$ occurs in Order 1 in the cycle 
$\bigcirc{\bf v}'_{i+1}\circ\bigcirc{\bf v}'_{i+2}\circ\cdots\circ\bigcirc{\bf v}'_{t-1}\circ\bigcirc{\bf v}'_t\circ C$. 
In the corresponding Order 2 situation we make the opposite definitions for the ${\bf v}'_i$; that is, ${\bf v}'_i := {\bf v}_i$ for each $i\in[t]$. As a consequence of our observations prior to the present Subcase, the 
permutation $\bigcirc{\bf v}'_1\circ\bigcirc{\bf v}'_2\circ\cdots\circ\bigcirc{\bf v}'_t\circ C$ is a single cycle whose length is $|C|$. 

Of course ${\bf p} := {\bf v}'_1{\bf v}'_2\ldots{\bf v}'_t \in {\rm Seq}(\sigma(n))$. Claim 1 tells us that there exists ${\bf q}\in{\rm Seq}(\sigma(n))$ for which $\bigcirc{\bf q}\circ C$ has at least three 
component cycles. So $\bigcirc{\bf p}\circ C \not\simeq \bigcirc{\bf q}\circ C$. \vspace{.3em} 

{\sf Subcase}: $n$ is odd. This time $t := (n-1)/2$ and, for $1\le i\le t$, we define the ${\bf v_i}$ and the ${\bf v}'_i$ as above. Here, $\sigma(n) = \langle{\bf v}_1,{\bf v}_2,\ldots,{\bf v}_t,(n-1\,\,0)\rangle$. 
Now let  ${\bf p} := \langle{\bf p'},(n-1\,\,\,0)\rangle := \langle{\bf v}'_1,{\bf v}'_2,\ldots,{\bf v}'_t,(n-1\,\,\,0)\rangle$. Then ${\bf p} \in {\rm Seq}(\sigma(n))$. So $\bigcirc{\bf p}\circ C =  
[\bigcirc{\bf p'}\circ C]\circ(n-1\,\,\,0)$ is the product of the cycles $\bigcirc{\bf p'}\circ C$ and $(n-1\,\,\,0)$. Moreover, $\{n-1,0\}\subset{\rm supp}(\bigcirc{\bf p}\circ C)$. Therefore $\bigcirc{\bf p}\circ C$ 
has exactly two cyclic components. Claim 1 promises us a ${\bf q}\in{\rm Seq}(\sigma(n))$ for which $\bigcirc{\bf q}\circ C$ has more than two cyclic components, whence 
$\bigcirc{\bf q}\circ C \not\simeq \bigcirc{\bf p}\circ C$. So the theorem holds under Case Two circumstances.\vspace{.7em}

{\sf Case Three}: \ ${\bf W}$ contains a cycle $C$ for which $1 < |n\cap{\rm supp}(C)| < n$. As before, let ${\sf w}\in{\rm Sym}(k)$ be the permutation whose family of cyclic components is ${\sf W}$, 
and let ${\sf w'}$ be the permutation whose family of (nontrivial) cyclic components is ${\bf W}\setminus\{C\}$.    Since $|n\cap{\rm supp}(C)|<n$, there exists $m\in n\setminus{\rm supp}(C)$. If 
$m\not\in{\rm supp}({\sf w'})$, then let $Q$ be the trivial cycle. But if $m\in{\rm supp}({\sf w'})$ then let $Q$ be the unique cycle in ${\bf W}\setminus\{C\}$ such that $m\in{\rm supp}(Q)$, and let 
${\sf w''}$ be the permutation whose family of nontrivial cyclic components is ${\bf W}\setminus\{C,Q\}$. Let ${\rm h}$ be the subsequence of $\nu(n)$ with ${\rm supp}({\rm h}) = n\cap{\rm supp}(Q)$. [If 
$Q=(m)$, we let ${\rm h}:=\langle m\rangle$.]

Since ${\rm h}$ is a nonempty proper subsequence of $\nu(n)$, by Lemma \ref{Numbers3} there exists ${\bf p}\in{\rm Seq}(\sigma(n))$ such that $\bigcirc{\bf p} = ({\rm h})(\nu(n)\setminus{\rm h})^-$. 
Thus $\bigcirc{\bf p}\circ{\sf w} = \bigcirc{\bf p}\circ CQ{\sf w''} = ({\rm h})(\nu(n)\setminus{\rm h})^-\circ CQ{\sf w''} = \big((\nu(n)\setminus{\rm h})^-\circ C{\sf w''}\big)\big(({\rm h})\circ Q\big)$,  which is the 
product of two permutations,  $(\nu(n)\setminus{\rm h})^-\circ C{\sf w''}$ and $({\rm h})\circ Q$, whose supports are disjoint. 

Now, if $\bigcirc{\bf p}\circ{\sf w}\restrict\big(n\cup{\rm supp}({\sf w})\big)$ has fewer than three cyclic components, then we employ Claim 1 to obtain some ${\bf q}\in{\rm Seq}(\sigma(n))$ such that 
$\bigcirc{\bf q}\circ{\sf w}\restrict\big(n\cup{\rm supp}({\sf w})\big)$ has at least three cycles, whence $\bigcirc{\bf q}\circ{\sf w} \not\simeq \bigcirc{\bf p}\circ{\sf w}$. So it remains only to deal with the situation 
where $\bigcirc{\bf p}\circ{\sf w}\restrict\big(n\cup{\rm supp}({\sf w})\big)$ has at least three cycles.

Suppose $\bigcirc{\bf p}\circ{\sf w}\restrict\big(n\cup{\rm supp}({\sf w})\big)$ has at least three cycles. Then there are two possibilities to treat; to wit:

(1). \  $(\nu(n)\setminus{\rm h})^-\circ C{\sf w''}\restrict Y$ has more than one cycle, where $Y := {\rm supp}(C{\sf w''})\cup{\rm supp}\big((\nu(n)\setminus{\rm h})^-\big)$.

(2). \  $({\rm h})\circ Q\restrict X$ has more than one cycle, where $X := {\rm supp}(Q)\cup\big(n\setminus{\rm supp}((\nu(n)\setminus{\rm h})^-)\big)$. \vspace{.7em}

\noindent FIRST POSSIBILITY: The permutation $(\nu(n)\setminus{\rm h})^-\circ C{\sf w''}\restrict Y$ has more than one cycle. Here we need 

\begin{claim} Each orbit of $(\nu(n)\setminus{\rm h})^-\circ C{\sf w''}\restrict Y$  contains at least one element in ${\rm supp}((\nu(n)\setminus{\rm h})^-)$. \end{claim}

\noindent{\it Proof of Claim.} Let $E$ be an orbit of $(\nu(n)\setminus{\rm h})^-\circ C{\sf w''}\restrict Y$. Let $e\in E$. Then $E = \{e\big((\nu(n)\setminus{\rm h})^-\circ C{\sf w''}\big)^i:i\in{\mathbb Z}\}$. 
We are done if $e\in{\rm supp}\big((\nu(n)\setminus{\rm h})^-\big)$. So suppose $e \not\in {\rm supp}\big((\nu(n)\setminus{\rm h})^-\big)$.  Then, since $e\in Y$, it follows that $e\in{\rm supp}(C{\sf w''})$, and hence 
that either $e\in{\rm supp}(C)$ or $e\in{\rm supp}({\sf w''})$. 

First, suppose that $e\in{\rm supp}(C)$. Then $eC^i\in{\rm supp}(C)\subseteq{\rm supp}(C{\sf w''})$ for all $i\in{\mathbb Z}$. Also, there is a least positive integer $l$ with $eC^l\in n\cap{\rm supp}(C)$. Now, 
$n\cap{\rm supp}(C) \subseteq {\rm supp}\big((\nu(n)\setminus{\rm h})^-\big)$. Hence $eC^l \in {\rm supp}((\nu(n)\setminus{\rm h})^-)$. Since $eC^i \not\in {\rm supp}((\nu(n)\setminus{\rm h})^-)$ for all 
$i \in \{0,1,\ldots,l-1\}$, we have that $eC^l = e\big((\nu(n)\setminus{\rm h})^-\circ C{\sf w''}\big)^l \in E$. So $eC^l \in E\,\cap\,{\rm supp}\big((\nu(n)\setminus{\rm h})^-\big)$. Thus   
$E\cap{\rm supp}\big((\nu(n)\setminus{\rm h})^-\big) \not= \emptyset$, as claimed. 

Next, suppose instead that $e \in {\rm supp}({\sf w''})$. Then $e \in {\rm supp}(F)$ for some cycle $F \in {\bf W}\setminus\{C,Q\}$. Since $F \in {\bf W}$, we have that $n\cap{\rm supp}(F) = 
\big({\rm supp}({\rm h})\cup{\rm supp}((\nu(n)\setminus{\rm h})^-)\big)\cap{\rm supp}(F) \not= \emptyset$. Also, since $F \in {\bf W}\setminus\{Q\}$, we have that ${\rm supp}(F)\cap{\rm supp}(Q) = \emptyset$, 
and hence that ${\rm supp}(F)\cap{\rm supp}\big(({\rm h})\big) = \emptyset$ since ${\rm supp}\big(({\rm h})\big) \subseteq {\rm supp}(Q)$. So ${\rm supp}(F)\cap{\rm supp}\big((\nu(n)\setminus{\rm h})^-\big) \not= \emptyset$. This time let $l$ denote the least positive integer such that $eF^l \in {\rm supp}(F)\cap{\rm supp}\big((\nu(n)\setminus{\rm h})^-\big)$. Let ${\sf w'''}$ be the permutation whose family of component cycles is 
${\bf W}\setminus\{C,Q,F\}$. Since $eF^i \not\in {\rm supp}\big((\nu(n)\setminus{\rm h})^-\circ C{\sf w'''}\big)$ when $i \in \{0,1,\ldots,l-1\}$, it follows that $eF^l = e\big((\nu(n)\setminus{\rm h})^-\circ CF{\sf w'''}\big) =
e\big((\nu(n)\setminus{\rm h})^-\circ C{\sf w''}\big)$, whence $eF^l \in E$. But then $eF^l \in E\cap{\rm supp}((\nu(n)\setminus{\rm h})^-)$, and again we have that 
$E\cap{\rm supp}((\nu(n)\setminus{\rm h})^-) \not= \emptyset$. The proof of Claim 2 is complete.\vspace{.7em}

\begin{claim} There exist orbits $A \not= B$ of $(\nu(n)\setminus{\rm h})^-\circ C{\sf w''}\restrict Y$ and elements $x$ and $y$ in ${\rm supp}((\nu(n)\setminus{\rm h})^-)$ with $x \in A$ and $y \in B$, and such that $y(\nu(n)\setminus{\rm h})^- = x$. \end{claim}

\noindent{\it Proof of Claim.} Let $B$ be an orbit of $G\circ C{\sf w''}\restrict Y$, where $G := (\nu(n)\setminus{\rm h})^-$. By Claim 2, there exists $b\in B\cap{\rm supp}(G)$. If $bG^i \in B$ for every $i\in{\mathbb N}$, then
 ${\rm supp}(G)\subseteq B$, contrary to Claim 2, since by hypothesis $G\circ C{\sf w''}\restrict Y$ has at least two orbits.   So $bG^j \in B$ while $bG^{j+1} \not\in B$ for some $j\in{\mathbb N}$. Let $y := bG^j$, let $x := yG$, and let $A$ be the orbit of $G\circ C{\sf w''}\restrict Y$ for which $x\in A$. Claim 3 follows. \vspace{.7em}

Let $x, y, A, B,$ and $G := (\nu(n)\setminus{\rm h})^-$ be as in Claim 3, and let ${\rm d}$ be the subsequence of $\nu(n)$ such that the term set of ${\rm d}$ is $\{x\}\cup{\rm supp}(({\rm h}))$. [If 
${\rm h} = \langle m\rangle$, let the term set of ${\rm d}$ be $\{m,x\}$.] Since $n\cap{\rm supp}(C) \subseteq {\rm supp}(G)$,  and since $|n\cap{\rm supp}(C)| > 1$, it follows that $|{\rm supp}(G)| > 1$. 
Since $x$ is the only term of ${\rm d}$ which belongs to ${\rm supp}(G)$, it follows that there is an element in ${\rm supp}(G)$, and hence in $n$, which is not a term in ${\rm d}$. So ${\rm d}$ is a 
proper subsequence of $\nu(n)$. The number sequence ${\rm d}$ was produced by inserting $x$ as a term into the sequence ${\rm h}$, and so the sequence $\nu(n)\setminus{\rm d}$ is obtained by deleting the term $x$
 from the sequence $\nu(n)\setminus{\rm h}$.  Since $|{\rm d}|\ge2$, there exists $z\in{\rm supp}\big(({\rm d})\big)$ such that $z({\rm d}) = x$. But $z\not= x$, and so $z \in {\rm supp}\big(({\rm h})\big)$; we can write 
$({\rm h}) = (z\,\,{\rm s}_z)$, and so $({\rm d}) = (z\,\,x)\circ ({\rm h}) = (z\,\,x)\circ(z\,\,{\rm s}_z) = (z\,\,x\,\,{\rm s}_z)$. Similarly, since $yG = x$, we may write $G = (y\,\,x\,\,{\rm s}_x)$. Delete the term $x$ from $G$, 
and obtain that $(\nu(n)\setminus{\rm d})^- = (y\,\,{\rm s}_x)$. Thus, $(y\,\,x)\circ G = (y\,\,x)\circ(\nu(n)\setminus{\rm h})^- = (y\,\,x)\circ(y\,\,x\,\,{\rm s}_x) = (x)(y\,\,{\rm s}_x) = (\nu(n)\setminus{\rm d})^-$. These equalities enable us to expand the product: \ $\bigcirc{\bf q}\circ{\sf w} = \bigcirc{\bf q}\circ CQ{\sf w''} = ({\rm d})(\nu(n)\setminus{\rm d})^-\circ CQ{\sf w''} =  (z\,\,x)\circ({\rm h})(\nu(n)\setminus{\rm d})^-\circ CQ{\sf w''} = (z\,\,x)\circ({\rm h})(y\,\,x)\circ(\nu(n)\setminus{\rm h})^-\circ CQ{\sf w''} = (z\,\,x)\circ(y\,\,x)\circ({\rm h})(\nu(n)\setminus{\rm h})^-\circ CQ{\sf w''} = (z\,\,x)\circ(y\,\,x)\circ\bigcirc{\bf p}\circ {\sf w}$.

Recall that $x$ and $y$ are elements in distinct orbits $A$ and $B$ of $G\circ C{\sf w''}\restrict Y$. Recall also that $\bigcirc{\bf p}\circ{\sf w} = [({\rm h})\circ Q][G\circ C{\sf w''}]$. But 
$[{\rm supp}(({\rm h})\circ Q)] \cap [{\rm supp}(G\circ C{\sf w''})] = \emptyset$, and $A$ and $B$ are distinct orbits of $\bigcirc{\bf p}\circ{\sf w}$ as well. Also, since $z \in {\rm supp}(({\rm h})) \subseteq 
{\rm supp}(({\rm h})\circ Q)$, we have that $z$ must belong to a third orbit $D$ of $\bigcirc{\bf p}\circ{\sf w}\restrict(n\cup{\rm supp}({\sf w}))$. Consequently the sets $A, B, C$ will amalgamate to form a 
single orbit $A\cup B\cup C$ of $\bigcirc{\bf p}\circ{\sf w}\restrict(n\cup{\rm supp}({\sf w}))$. Thus the permutation $\bigcirc{\bf q}\circ{\sf w}\restrict(n\cup{\rm supp}({\sf w}))$ will possess exactly two 
fewer orbits than the permutation $\bigcirc{\bf p}\circ{\sf w}\restrict(n\cup{\rm supp}({\sf w}))$. Hence $\bigcirc{\bf q}\circ{\sf w} \not\simeq \bigcirc{\bf p}\circ{\sf w}$. \vspace{.8em}

\noindent SECOND POSSIBILITY: \ $({\rm h})\circ Q$ has more than one orbit. \vspace{.3em} 

If $|n\cap{\rm supp}(Q)| = 1$, then $({\rm h})\circ Q$ is a single cycle, contrary to the present hypothesis. Thus $|{\rm supp}\big(({\rm h})\big)| = |n\cap{\rm supp}(Q)| > 1$. So, arguing as in the First Possibility, 
we can find $\{x,y\}\subseteq{\rm supp}\big(({\rm h})\big)$ and distinct orbits $A$ and $B$ of $({\rm h})\circ Q$ with $\langle x,y\rangle \in A\times B$ and such that $x({\rm h}) = y$. Let ${\rm d}$ be the 
sequence obtained by deleting the term $x$ from the sequence ${\rm h}$. By Lemma \ref{Numbers3}, there exists ${\bf q}\in{\rm Seq}(\sigma(n))$ such that $\bigcirc{\bf q} = ({\rm d})(\nu(n)\setminus{\rm d})^-$. 

Observe that $({\rm d}) = ({\rm h})\circ(x\,\,y)$, and that if an element $z \in {\rm supp}\big((\nu(n)\setminus{\rm d})^-\big)$ satisfies $z(\nu(n)\setminus{\rm d})^- = x$ then $(\nu(n)\setminus{\rm d})^- = 
(z\,\,x)\circ(\nu(n)\setminus{\rm h})^- = (\nu(n)\setminus{\rm h})^-\circ(x\,\,t)$ where $t := x(\nu(n)\setminus{\rm d})^-$. So $\bigcirc{\bf q}\circ{\sf w} = \bigcirc{\bf q}\circ CQ{\sf w''} = 
({\rm d})(\nu(n)\setminus{\rm d})^-\circ CQ{\sf w''} = ({\rm h})\circ(x\,\,y)(\nu(n)\setminus{\rm h})^-\circ(x\,\,t)\circ CQ{\sf w''} = ({\rm h})(\nu(n)\setminus{\rm h})^-\circ CQ{\sf w''}(x\,\,y)\circ(x\,\,t) = 
\bigcirc{\bf q}\circ{\sf w}\circ(x\,\,y)(t\,\,x)$. As in the First Possibility, we encounter $x, y$, and $t$ as elements in distinct orbits $A, B$, and $C$ of the permutation 
$\bigcirc{\bf p}\circ{\sf w}\restrict(n\cup{\rm supp}({\sf w})$. By an argument similar to that in the First Possibility, we infer that $\bigcirc{\bf q}\circ{\sf w} \not\simeq \bigcirc{\bf p}\circ{\sf w}$. So 
${\bf f}$ is not CI in Case Three too, and thus Theorem \ref{NixCircuits} is proved. \end{proof}  

We have completed the proof of Theorem \ref{CCIT}, which tells us exactly which connected transpositional multigraphs are CI. This renders it easy to specify the class of all CI transpositional multigraphs on the 
vertex set $n$. Recall that, where $n\in \{1,2\}$, every transpositional sequence is both permutatially complete and conjugacy invariant. The following summarizes the main results in \S3.

\begin{thm}\label{SummaryCI} For $n\ge 3$, let ${\bf u}$ be a sequence in $1^{n-2}2^1$. Let $\{{\cal T}({\bf u}_i): i\in m\}$ be the set of components of the transpositional multigraph ${\cal T}({\bf u})$,   
where each ${\bf u}_i$ is the subsequence of ${\bf u}$ for which the vertex set of ${\cal T}({\bf u}_i)$ is $V_i = \bigcup{\rm Supp}({\bf u}_i)$, and where of course $\{V_i:i\in m\}$ is a partition of the set $n$. Then: 

${\bf u}$ is conjugacy invariant if and only if ${\bf u}_i$ is conjugacy invariant for every $i\in m$.

${\bf u}_i$ is {\rm CI} for $|V_i|=3$ if and only if either $|{\bf u}_i|$ is odd or ${\cal T}({\bf u}_i)$ is a multitree with a simple multitwig. 

${\bf u}_i$ is {\rm CI} for $|V_i|\ge4$ if and only if ${\cal T}({\bf u}_i)$ is a multitree, no vertex of which is on more than one nonsimple multiedge, and each even-multiplicity multiedge of which is a multitwig 
whose non-leaf vertex has exactly two neighbors. \end{thm}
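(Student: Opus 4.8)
The plan is to prove the three assertions separately, obtaining the two component characterizations almost for free from Theorem \ref{CCIT} and concentrating the actual work on the decomposition statement. For the second and third assertions, observe that by construction each ${\cal T}({\bf u}_i)$ is a \emph{connected} transpositional multigraph on its vertex set $V_i$, so ${\bf u}_i$ is precisely the kind of sequence treated by Theorem \ref{CCIT} with $n$ there replaced by $|V_i|$. Thus for $|V_i|=3$ and for $|V_i|\ge4$ the stated conditions are verbatim the conclusions of Theorem \ref{CCIT}. I would also record that every component with $|V_i|\le2$ is automatically CI: if $|V_i|=1$ then ${\bf u}_i$ is empty, and if $|V_i|=2$ then every ${\bf r}_i\in{\rm Seq}({\bf u}_i)$ is a rearrangement of a fixed power of the single transposition spanning $V_i$, so ${\rm Prod}({\bf u}_i)$ is a singleton. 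Hence these small components never obstruct conjugacy invariance and may be ignored when checking the first assertion.

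The heart of the argument is the first assertion. First I would establish a product decomposition. Since $\{V_i:i\in m\}$ partitions $n$ and $V_i=\bigcup{\rm Supp}({\bf u}_i)$, any two terms drawn from distinct ${\bf u}_i$ and ${\bf u}_j$ have disjoint supports and therefore commute. Consequently, for every ${\bf r}\in{\rm Seq}({\bf u})$, writing ${\bf r}_i$ for the subsequence of ${\bf r}$ whose terms lie in $V_i$ (so that ${\bf r}_i\in{\rm Seq}({\bf u}_i)$), this commuting lets us regroup by component: $\bigcirc{\bf r}=\bigcirc{\bf r}_0\circ\bigcirc{\bf r}_1\circ\cdots\circ\bigcirc{\bf r}_{m-1}$, a composition of permutations with pairwise disjoint supports. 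Running the regrouping backward, by concatenating independently chosen rearrangements of the several ${\bf u}_i$, shows every such combination is realized, so ${\rm Prod}({\bf u})=\{h_0\circ h_1\circ\cdots\circ h_{m-1}:h_i\in{\rm Prod}({\bf u}_i)\}$, a family of disjoint products. Because the supports are disjoint, the cycle type of any such product is the multiset union, over $i$, of the cycle types (each taken over $V_i$) of its factors: ${\rm Type}(h_0\circ\cdots\circ h_{m-1})={\rm Type}(h_0)\uplus\cdots\uplus{\rm Type}(h_{m-1})$.

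From this the equivalence follows quickly. If every ${\bf u}_i$ is CI, then each ${\rm Type}(h_i)$ is a fixed multiset $T_i$ independent of the choice $h_i\in{\rm Prod}({\bf u}_i)$, so every element of ${\rm Prod}({\bf u})$ has the one fixed type $T_0\uplus\cdots\uplus T_{m-1}$, and ${\bf u}$ is CI. For the converse I would argue the contrapositive: if some ${\bf u}_j$ fails to be CI, choose $h_j,h_j'\in{\rm Prod}({\bf u}_j)$ with ${\rm Type}(h_j)\ne{\rm Type}(h_j')$, and fix arbitrary $h_i\in{\rm Prod}({\bf u}_i)$ for each $i\ne j$ (each ${\rm Prod}({\bf u}_i)$ is nonempty, containing $\bigcirc{\bf u}_i$). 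Replacing only the $j$-th factor produces two elements of ${\rm Prod}({\bf u})$ whose types are $M\uplus{\rm Type}(h_j)$ and $M\uplus{\rm Type}(h_j')$, where $M$ is the multiset union of the ${\rm Type}(h_i)$ over $i\ne j$.

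The one point that genuinely needs care, and which I expect to be the only real obstacle, is twofold bookkeeping: that multiset union is cancellative, so adjoining the common part $M$ to two distinct cycle-type multisets leaves them distinct, whence the two products are non-conjugate and ${\bf u}$ is not CI; and, relatedly, that ${\rm Type}(h_i)$ must be read as the cycle type of $h_i$ \emph{over $V_i$}, with its $1$-cycles included, so that the multiset union sums to $n$ and records each fixed point of the global product exactly once. Once these are in place, assembling the first assertion with the component characterizations of Theorem \ref{CCIT} (and the trivially CI small components) establishes all three clauses, completing the argument.
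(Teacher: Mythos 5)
Your proposal is correct and takes essentially the same route as the paper: the second and third claims are read off from the connected-case characterization (Theorem \ref{CCIT}, which is exactly the conjunction of Theorems \ref{n=3}, \ref{SufficiencyCI}, \ref{NecessaryCI}, \ref{Four} and \ref{NixCircuits} that the paper cites). The only difference is that you spell out the first (componentwise decomposition) claim --- terms in distinct components have disjoint supports and commute, so ${\rm Prod}({\bf u})$ factors as disjoint products over the ${\rm Prod}({\bf u}_i)$ and cycle types combine by cancellative multiset union, with the $|V_i|\le2$ components trivially CI --- a step the paper simply declares obvious.
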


\begin{proof}  The theorem's first claim is obvious. Its second claim is immediate from Theorem \ref{n=3}. Its third claim merely combines Theorems \ref{SufficiencyCI}, \ref{NecessaryCI}, \ref{Four} and 
\ref{NixCircuits}. \end{proof}

\subsection{Unfinished work}

If a sequence ${\bf s}$ in ${\rm Sym}(n)$ is perm-complete then of course ${\rm Prod}({\bf s})$ is a coset of the subgroup ${\rm Alt}(n)$ of ${\rm Sym}(n)$. Ross Willard asks for what other ${\bf s}$ are there 
subgroups $H_{\bf s} < {\rm Sym}(n)$ for which ${\rm Prod}({\bf s})\in{\rm Sym}(n)/H_{\bf s}$. 

An obvious task ahead pertaining to conjugacy invariance is the formidible one of providing necessary and sufficient criteria for deciding conjugacy invariance of every permutational sequence ${\bf s}$ in 
${\rm Sym}(n)$. The ultimate goal is criteria enabling one to recognize the family  ${\cal C}_{\bf s}$ of conjugacy classes ${\sf C}$ of ${\rm Sym}(n)$ for which ${\sf C}\cap{\rm Prod}({\bf s})\not=\emptyset$.

Every $f\in{\rm Sym}(n)$ has an infinite number of factorizations into products of transpositions. But if the lengths of the nontrivial cyclic components of $f$ are $\ell_1,\ell_2,\ldots,\ell_d$ then the length of every minimal 
${\bf t}$ is $\big(\sum_{i=1}^d \ell_i\big)-d$, and $f$ has, by our definition given now, exactly $\Phi({\rm Type}(f)) > \sum_{i=1}^d\ell_i^{\ell_i-2}$ distinct minimal length transpositional factorizations if $|{\rm supp}(f)|\ge5$.
\vspace{.5em}

\noindent{\bf Problem.} \ Specify exact values for $\Phi({\rm Type}(f))$. The enumeration gets nontrivial when $f$ is not single-cycled.\vspace{.5em}

Clearly, if every term $s_i$ of the sequence ${\bf s} := \langle s_0,s_1,\ldots,s_m\rangle$ in ${\rm Sym}(n)$ 
has a factorization, $s_i = \bigcirc{\bf t}_i = t_{i,0}\circ t_{i,1}\circ\cdots\circ t_{i,l_i}$ into a product of transpositions such that the conglomerate transpositional sequence ${\bf t} := {\bf t}_0{\bf t}_1\cdots{\bf t}_m$ 
is conjugacy invariant, then the permutational sequence ${\bf s}$ itself is conjugacy invariant. Thus we quickly get a sufficient condition for ${\bf s}$ to be conjugacy invariant. However, that condition is not  
necessary to assure the conjugacy invariance of a permutational sequence.\vspace{.7em}

\noindent{\bf Counterexample.} Let ${\bf s} := \langle(0\,\,1\,\,2),(0\,\,2\,\,1)^{\beta(2)}\rangle$. We omit the easy verification that ${\rm Prod}({\bf s})\subseteq 3^1$, whence ${\bf s}$ is conjugacy invariant. 
However, $(0\,\,1\,\,2)$ has exactly three distinct factorizations as a product of two transpositions; these are: 
\[(0\,\,1\,\,2) = (0\,\,1)\circ(0\,\,2)\qquad\qquad (0\,\,1\,\,2) = (0\,\,2)\circ(1\,\,2)\qquad\qquad (0\,\,1\,\,2) = (1\,\,2)\circ(0\,\,1)\] Of course $(0\,\,2\,\,1)$ likewise has exactly three such factorizations, and since 
the permutation  $(0\,\,2\,\,1)$ occurs exactly twice as a term in ${\bf s}$, we infer each sequence ${\bf t}$ in $1^12^1$ that results from factorizations of each term of ${\bf s}$ into products of two transpositions  
per term is six terms long. 

The reader can check that there are exactly three distinct ${\rm Seq}({\bf t}_i)$ that result from the possible length-$6$ conglomerate transpositional sequences. As usual, each such 
${\rm Seq}({\bf t}_i)$, for $i\in3$, determines a transpositional multigraph ${\cal T}({\bf t}_i)$ on the vertex set $3$. We list the multiedge sets of these three multigraphs; they are: 
\[E_0 := \{(0\,\,1)^{\beta(2)},(1\,\,2)^{\beta(2)},(0\,\,2)^{\beta(2)}\}\qquad E_1 =\{(0\,\,1)^{\beta(2)},(1\,\,2),(0\,\,2)^{\beta(3)}\}\qquad E_2 = \{(0\,\,1)^{\beta(3)},(1\,\,2)^{\beta(3)}\}\]  By Theorem 
\ref{SummaryCI}, none of these three transpositional multigraphs ${\cal T}({\bf t}_i)$ is conjugacy invariant.\vspace{.5em}

This counterexample exhibits a conjugacy invariant permutational sequence ${\bf s}$ which lacks a conjugacy invariant conglomerate transpositional sequence that results from transpositional factorizations 
of the terms in ${\bf s}$. We leave it to the reader to corroborate that the permutational sequence $\langle(0\,\,1\,\,2),(0\,\,3\,\,2),(0\,\,3\,\,1)\rangle$ in ${\rm Sym}(4)$ is a second, perhaps more interesting, 
such counterexample.\vspace{.8em}

A transposition is a special sort of ``single-cycled'' permutation; i.e., an $f \in \bigcup\{1^{n-c}c^1:2\le c\le n\}$. Arthur Tuminaro \cite{Tuminaro} kicked off the study of conjugacy invariance of sequences of single-cycled 
permutations.

\vspace{1em}

\noindent{\bf Key words and expressions.}  permutational/transpositional sequence, transpositional multigraph, compositional product, \ Blt$(n)$, permutationally complete, conjugacy invariant, single-cycled 
permutation.\vspace{1em}

\noindent{\bf 2010 Mathematical Subject Classification.} 20B30, 20B99

\end{document}